\documentclass[a4paper]{amsart}
\usepackage{geometry}\geometry{margin=1in}
\usepackage[foot]{amsaddr}
\usepackage{amsthm,amssymb}

\usepackage{tikz}
\usetikzlibrary{shapes,arrows,positioning}

\usepackage[utf8]{inputenc}
\usepackage{amsmath,amssymb,amsthm,commath,esint,tikz-cd,tikz, mathtools, physics}
\usepackage[mathscr]{euscript}
\usepackage{amsfonts}
\usepackage{subcaption}
\usepackage{hyperref}
\usepackage{graphicx}
\usepackage{float}
\usepackage{fancyhdr}
\usepackage{bm}
\usepackage{bbm}

\newcommand{\infn}[1]{{\left\vert #1 
    \right\vert_\infty}}
\numberwithin{equation}{section}

\title[Error Analysis for PINNs approximating Kolmogorov PDEs]{Error Analysis for Physics Informed Neural Networks\\ (PINNs) approximating Kolmogorov PDEs}

\author{T.~De Ryck}
\email{tim.deryck@sam.math.ethz.ch}

\author{S.~Mishra}
\address[T. De Ryck and S. Mishra]{Seminar for Applied Mathematics, ETH Z\"urich, R\"amistrasse 101, 8092 Z\"urich, Switzerland}
\email{siddhartha.mishra@sam.math.ethz.ch}

\newcommand{\s}{s} 
\renewcommand{\S}{\mathcal{S}} 
\newcommand{\Et}{\mathcal{E}_T} 
\newcommand{\Eg}{\mathcal{E}_G} 
\newcommand{\Aet}{\overline{\mathcal{E}}_T} 
\newcommand{\Aeg}{\overline{\mathcal{E}}_G} 

\newcommand{\hu}{\hat{u}}
\newcommand{\bigO}{\mathcal{O}}
\newcommand{\E}[1]{{\mathbb{E}\left[ #1 \right]}} 
\newcommand{\Prob}[1]{{\mathbb{P}\left( #1 \right)}} 

\newcommand{\vpa}{\widehat{\varphi}_\xi}

\newcommand{\mult}{\widehat{\times}_\eta}
\newcommand{\unn}{\widehat{u}^{M,N}}
\newcommand{\utilde}{\Tilde{u}^{M,N}}
\newcommand{\ubar}{\bar{u}^{N}}
\newcommand{\F}[1]{(\mathcal{F}\varphi)\left( #1 \right)}
\newcommand{\Fhat}[1]{(\widehat{\mathcal{F}\varphi})_\delta\left( #1 \right)}
\renewcommand{\L}[1]{\mathcal{L}\left[ #1 \right]}
\newcommand{\nO}{\frac{n_0(t)T}{N}}
\newcommand{\EL}{\mathcal{D}}

\newtheorem{theorem}{Theorem}[section]

\newtheorem{remark}[theorem]{Remark}
\newtheorem{definition}[theorem]{Definition}
\newtheorem{lemma}[theorem]{Lemma}

\newtheorem{corollary}[theorem]{Corollary}

\begin{document}

\begin{abstract}
Physics informed neural networks approximate solutions of PDEs by minimizing pointwise residuals. We derive rigorous bounds on the error, incurred by PINNs in approximating the solutions of a large class of linear parabolic PDEs, namely Kolmogorov equations that include the heat equation and Black-Scholes equation of option pricing, as examples. We construct neural networks, whose PINN residual (generalization error) can be made as small as desired. We also prove that the total $L^2$-error can be bounded by the generalization error, which in turn is bounded in terms of the training error, provided that a sufficient number of randomly chosen training (collocation) points is used. Moreover, we prove that the size of the PINNs and the number of training samples only grow polynomially with the underlying dimension, enabling PINNs to overcome the curse of dimensionality in this context. These results enable us to provide a comprehensive error analysis for PINNs in approximating Kolmogorov PDEs.  
\end{abstract}

\maketitle

\section{Introduction}
\noindent {\bf Background and context.}
Partial differential equations (PDEs) are ubiquitous as mathematical models in the sciences and engineering. Explicit solution formulas for PDEs are not available except in very rare cases. Hence, numerical methods, such as finite difference, finite element and finite volume methods, are key tools in approximating solutions of PDEs. In spite of their well-documented successes, it is clear that these methods are inadequate for a variety of problems involving PDEs. In particular, these methods are not suitable for efficiently approximating PDEs with \emph{high-dimensional} state or parameter spaces. Such problems arise in different contexts ranging from PDEs such as the Boltzmann, Radiative transfer, Schr\"odinger and Black-Scholes type equations with very high number of spatial dimensions, to \emph{many-query} problems, as in uncertainty quantification (UQ), optimal design and inverse problems, which are modelled by PDEs with very high parametric dimensions. 

Given this pressing need for efficient algorithms to approximate the afore-mentioned problems, machine learning methods are being increasingly deployed in the context of scientific computing. In particular, deep neural networks (DNNs) i.e., multiple compositions of affine functions and scalar nonlinearities, are being widely used. Given the \emph{universality} of DNNs in being able to approximate any continuous (measurable) function to desired accuracy, they can serve as ansatz spaces for solutions of PDEs, as for high-dimensional semi-linear parabolic PDEs \cite{HEJ1}, linear elliptic PDEs \cite{SZ1,Kuty} and nonlinear hyperbolic PDEs \cite{LMR1,LMPR1} and references therein. More recently, DNN-inspired architectures such as DeepOnets \cite{ChenChen,DeepOnet,LMK1} and Fourier Neural operators \cite{FNO} have been shown to even learn infinite-dimensional \emph{operators}, associated with underlying PDEs, efficiently.

A large part of the literature on the use of deep learning for approximating PDEs relies on the \emph{supervised learning} paradigm, where the DNN has to be \emph{trained} on possibly large amounts of labelled data. However in practice, such data is acquired from either measurements or computer simulations. Such simulations might be very computationally expensive \cite{LMR1} or even infeasible in many contexts, impeding the efficiency of the supervised learning algorithms. Hence, it would be very desirable to find a class of machine learning algorithms that can approximate PDEs, either without any explicit need for data or with very small amounts of data. Physics informed neural networks (PINNs) provide exactly such a framework. \\

\noindent {\bf Physics Informed Neural Networks (PINNs).} PINNs were first proposed in the 90s \cite{DPT,Lag1,Lag2} as a machine learning framework for approximating solutions of differential equations. However, they were resurrected recently in \cite{KAR1,KAR2} as a practical and computationally efficient paradigm for solving both forward and inverse problems for PDEs. Since then, there has been an explosive growth in designing and applying PINNs for a variety of applications involving PDEs. A very incomplete list of references includes \cite{KAR4,KAR5,KAR6,KAR7,KAR8,jag1,jag2,MM1,MM2,MM3,BKMM1} and references therein. 

We briefly illustrate the idea behind PINNs by considering the following general form of a PDE:
\begin{equation}\label{eq:PDE}
    \EL[u](x,t) = 0, \quad \mathcal{B}u(y,t) = \psi(y,t), \quad u(x,0) = \varphi(x), \quad \text{for } x\in D, y\in \partial D, t\in [0,T], 
\end{equation}
Here, $D\subset \mathbb{R}^d$ is compact and $\EL,\mathcal{B}$ are the differential and boundary operators, $u:D\times [0,T]\to\mathbb{R}^m$ is the solution of the PDE, $\psi:\partial D\times [0,T]\to\mathbb{R}^m$ specifies the (spatial) boundary condition and $\varphi:D\to\mathbb{R}^m$ is the initial condition.

We seek deep neural networks $u_{\theta}:D\times [0,T]\to\mathbb{R}^m$ (see \eqref{eq:dnn} for a definition), parameterized by $\theta \in \Theta$, constituting the weights and biases, that approximate the solution $u$ of \eqref{eq:PDE}. To this end, the key idea behind PINNs is to consider pointwise \emph{residuals}, defined for any sufficiently smooth function $f:D\times [0,T]\to\mathbb{R}^m$ as, 
\begin{equation}\label{eq:pinn-residuals}
    \mathcal{R}_i[f](x,t) = \EL[f](x,t), \quad   \mathcal{R}_s[f](t,y) = \mathcal{B}f(t,y) - \psi(t,y), \quad \mathcal{R}_t[f](x) = f(0,x) - \varphi(x)
\end{equation}
for $x\in D$, $y\in \partial D$, $t\in [0,T]$. Using these residuals, one measures how well a function $f$ satisfies resp. the PDE, the boundary condition and the initial condition of \eqref{eq:PDE}. Note that for the exact solution $ \mathcal{R}_i[u]=\mathcal{R}_s[u]=\mathcal{R}_t[u]=0$. 

Hence, within the PINNs algorithm, one seeks to find a neural network $u_\theta$, for which all residuals are simultaneously minimized, e.g. by minimizing the quantity,
\begin{equation}\label{eq:def-generalization-error}
    \Eg(\theta)^2 = \int_{D\times[0,T]} \abs{\mathcal{R}_i[u_\theta](x,t)}^2 dxdt +  \int_{\partial D\times[0,T]} \abs{\mathcal{R}_s[u_\theta](x,t)}^2 ds(x)dt +  \int_{D} \abs{\mathcal{R}_t[u_\theta](x)}^2 dx. 
\end{equation}
However, the quantity $\Eg(\theta)$, often referred to as the \emph{population risk} or  \textit{generalization error} \cite{MM1} of the neural network $u_\theta$ involves integrals and can therefore not be directly minimized in practice. Instead, the integrals in \eqref{eq:def-generalization-error} are approximated by a numerical quadrature, resulting in,
\begin{equation}
     \Et^i(\theta,\S_i)^2 = \sum_{n=1}^{N_i} w^n_i \abs{\mathcal{R}_i[u_\theta](t^n_i,x^n_i)}^2, \quad
      \Et^s(\theta,\S_s)^2 = \sum_{n=1}^{N_s} w^n_s \abs{\mathcal{R}_s[u_\theta](t^n_s,x^n_s)}^2, \quad
      \Et^t(\theta,\S_t)^2 = \sum_{n=1}^{N_t} w^n_t \abs{\mathcal{R}_t[u_\theta](x^t_i)}^2.
\end{equation}
Here, one samples quadrature points in space-time to construct data sets $\S_i = \{(t^n_i,x^n_i)\}_{n}^{N_i}$, $\S_s = \{(t^n_s,x^n_s)\}_{n}^{N_s}$ and $\S_t = \{x^n_t\}_{n}^{N_t}$, and $w^n_q$ are suitable quadrature weights for $q=i,t,s$.
Thus, the \emph{generalization error} $\Eg(\theta)$ is approximated by the so-called \emph{training loss} or \emph{training error} \cite{MM1},
\begin{equation}\label{eq:def-training-error}
    \Et(\theta,\S)^2 = \Et^i(\theta,\S_i)^2 + \Et^s(\theta,\S_s)^2+ \Et^t(\theta,\S_t)^2, 
\end{equation}
where $\S = (\S_i, \S_s, \S_t)$, and a stochastic gradient descent algorithm is to used to approximate the non-convex optimization problem,
\begin{equation}
\label{eq:opt}
    \theta^* = \arg\min_{\theta\in\Theta}  \Et(\theta,\S)^2, 
\end{equation}
and $u^{\ast} = u_{\theta^{\ast}}$ is the trained PINN that approximates the solution $u$ of the PDE \eqref{eq:PDE}. \\

\noindent {\bf Theory for PINNs.}
Given this succinct description of the PINNs algorithm, the following fundamental theoretical questions arise immediately,
\begin{itemize}
    \item [Q1.] Given a tolerance $\varepsilon > 0$, does there exist a PINN $\hat{u} = u_{\hat{\theta}}$, parametrized by a $\hat{\theta} \in \Theta$ such that the corresponding generalization error (population risk) $\Eg(\hat{\theta})$\label{Q1} \eqref{eq:def-generalization-error} is small i.e., $\Eg(\hat{\theta}) < \varepsilon$?
    \item [Q2.] Given a PINN $\hat{u}$ with small generalization error, is the corresponding \emph{total error} $\|u-\hat{u}\|$ small, i.e., is $\|u-\hat{u}\|< \delta (\varepsilon)$, for some $\delta(\varepsilon) \sim {\mathcal O}(\varepsilon)$, for some suitable norm $\|.\|$, and with $u$ being the solution of the PDE \eqref{eq:PDE}?\label{Q2}   
\end{itemize}
The above questions are of fundamental importance as affirmative answers to them certify that, \emph{in principle}, there exists a PINN, corresponding to the parameter $\hat{\theta}$, such that the resulting PDE residual \eqref{eq:pinn-residuals} is small, and consequently also the overall error in approximating the solution of the PDE \eqref{eq:PDE}.

Moreover, the smallness of the generalization error $\Eg(\hat{\theta})$ can imply that the training error $\Et(\hat{\theta})$ \eqref{eq:def-training-error}, which is an approximation of the generalization error, is also small. Hence, \emph{in principle}, the (global) minimization of the optimization problem \eqref{eq:opt} should result in a proportionately small training error. 

However, the optimization problem \eqref{eq:opt} involves the minimization of a \emph{non-convex}, very-high dimensional objective function. Hence, it is unclear if a global minimum is attained by a gradient-descent algorithm. \emph{In practice}, one can evaluate the training error $\Et(\theta^{\ast})$ for the (local) minimizer $\theta^{\ast}$ of \eqref{eq:opt}. Thus, it is natural to ask if,
\begin{itemize}
    \item [Q3.] Given a small training error $\Et(\theta^{\ast})$ and a sufficiently large training set $\S$, is the corresponding generalization error $\Eg(\theta^{\ast})$ also proportionately small? \label{Q3}
\end{itemize}
An affirmative answer to question Q3, together with question Q2, will imply that the trained PINN $u_{\theta^*}$ is an accurate approximation of the solution $u$ of the underlying PDE \eqref{eq:PDE}. 
Thus, answering the above three questions affirmatively will constitute a comprehensive theoretical investigation of PINNs and provide a rationale for their very successful empirical performance. 

Given the very large number of papers exploring PINNs empirically, the rigorous theoretical study of PINNs is in a relative state of infancy. In \cite{shin2020convergence}, the authors prove a consistency result for PINNs, for linear elliptic and parabolic PDEs, where they show that if $\Et(\theta_m)\to 0$ for a sequence of neural networks $\{u_{\theta_m}\}_{m\in\mathbb{N}}$, then $\norm{u_{\theta_m}-u}_{L^\infty}\to0$, under the assumption that one adds a specific $C^{k,\alpha}$-regularization term to the loss function, thus partially addressing question Q3 for these PDEs. However, this result does not provide quantitative estimates on the underlying errors. A similar result, with more quantitative estimates for advection equations is provided in \cite{Zhang1}.  

In \cite{MM1,MM2}, the authors provide a strategy for answering questions Q2 and Q3 above. They leverage the \emph{stability} of solutions of the underlying PDE \eqref{eq:PDE} to bound the total error in terms of the generalization error (question Q2). Similarly, they use accuracy of quadrature rules to bound the generalization error in terms of the training error (question Q3). This approach is implemented for Forward problem corresponding to a variety of PDEs such as the semi-linear and quasi-linear parabolic equations and the incompressible Euler (Navier-Stokes) equations \cite{MM1}, radiative transfer equations \cite{MM3}, nonlinear dispersive PDEs such as the KdV equations \cite{BKMM1} and for the unique continuation (data assimilation) inverse problem for many linear elliptic, parabolic and hyperbolic PDEs \cite{MM2}. However, these works suffer from two essential limitations: first, question Q1 on the smallness of generalization error is not addressed and second, the assumptions on the quadrature rules in \cite{MM1,MM2} are rather stringent and in particular, the analysis does not include the common choice of using random sampling points in $\S$, unless an additional validation set is chosen. Thus, the theoretical analysis presented in \cite{MM1,MM2} is incomplete and this sets the stage for the current paper. \\

\noindent {\bf Aims and scope of this paper.} Given the above discussion, our main aims in this paper are to address the fundamental questions Q1, Q2 and Q3 and to establish a solid foundation and rigorous rationale for PINNs in approximating PDEs. 

To this end, we choose to focus on a specific class of PDEs, the so-called Kolmogorov equations \cite{oksendal2003stochastic} in this paper. These equations are a class of \emph{linear, parabolic} PDEs which describe the space-time evolution of the density for a large set of stochastic processes. Prototypical examples include the heat (diffusion) equation and Black-Scholes type PDEs that arise in option pricing. A key feature of Kolmogorov PDEs is the fact that the equations are set in very high dimensions. For instance, the spatial dimension in a Black-Scholes PDE is given by the number of underlying assets (stocks), upon which the basket option is contingent, and can range up to hundreds of dimensions.  

Our motivation for illustrating our analysis on Kolmogorov PDEs is two-fold. First, they offer a large class of PDEs with many applications, while still being linear. Second, it has already been shown empirically in \cite{MM1,RT,MMT1} that PINNs can approximate very high-dimensional Kolmogorov PDEs efficiently. 

Thus in this paper, 
\begin{itemize}
    \item We show that there exist PINNs, approximating a class of Kolmogorov PDEs, such that the resulting generalization error \eqref{eq:def-generalization-error}, and the total error, can be made as small as possible. Moreover under suitable hypothesis on the initial data and the underlying exact solutions, we will show that the size of these PINNs does not grow exponentially with respect to the spatial dimension of the underlying PDE. This is done by explicitly constructing PINNs using a representation formula, the so-called Dynkin's formula, that relates the solutions of the Kolmogorov PDE to the generator and sample paths for the underlying stochastic process. 
    \item We leverage the stability of Kolmogorov PDEs to bound the error, incurred by PINNs in $L^2$-norm in approximating solutions of Kolmogorov PDEs, by the underlying generalization error. 
    \item We provide rigorous bounds for the generalization error of the PINN approximating Kolmogorov PDEs in terms of the underlying training error \eqref{eq:def-training-error}, provided that the number of \emph{randomly} chosen training points is sufficiently large. Furthermore, the number of random training points does not grow exponentially with the dimension of the underlying PDE. We use a novel error decomposition and standard Hoeffding's inequality type covering number estimates to derive these bounds.
\end{itemize}
Thus, we provide affirmative answers to questions Q1, Q2 and Q3 for this large class of PDEs. Moreover, we also show that PINNs can \emph{overcome the curse of dimensionality} in approximating these PDEs. Hence, our results will place PINNs for these PDEs on solid theoretical foundations. 

The rest of the paper is organized as follows: In section \ref{sec:2}, we present preliminary material on linear Kolmogorov equations and describe the PINNs algorithm to approximate them. The generalization error and total error (questions Q1 and Q2) are considered in section \ref{sec:3} and the generalization error is bounded in terms of training error (question Q3) in section \ref{sec:gen}. 
\section{PINNs for Linear Kolmogorov Equations}
\label{sec:2}
\subsection{Linear Kolmogorov PDEs}\label{sec:Kolmogorov}
In this paper, we will consider the following general form of linear time-dependent partial differential equations,
\begin{equation}\label{eq:kolmogorov-pde}
    \begin{cases}u_t(t,x) = \frac{1}{2}\text{Trace}(\sigma(x)\sigma(x)^T H_x[u](t,x)) + \mu(x)^T\cdot \nabla_x[u](t,x) &\text{for all }(t,x)\in [0,T]\times D,\\ u(0,x) = \varphi(x) &\text{for all } x\in D, \\u(t,x) = \psi(x,t) &\text{for all } (t,x)\in [0,T]\times \partial D. \end{cases}
\end{equation}
where $\sigma:\mathbb{R}^d\to \mathbb{R}^{d\times d}$ and $\mu:\mathbb{R}^d\to \mathbb{R}^d$ are affine functions, $\nabla_x$ denotes the gradient and $H_x$ the Hessian (both with respect to the space coordinates). For definiteness, we set $D = (0,1)^d$. PDEs of the form \eqref{eq:kolmogorov-pde} are referred to as Kolmogorov equations and arise in a large number of models in science and engineering. 
Prototypical examples of Kolmogorov PDEs include,
\begin{itemize}
    \item [1.] {\bf Heat Equation}: Let $\mu=0$ and $\sigma = \sqrt{\kappa} I_d$, where $\kappa>0$ is the thermal diffusivity of the medium and $I_d$ is the $d$-dimensional identity matrix. This results in the following PDE for the temperature $u$,
    \begin{equation}
    \label{eq:ht}
        u_t(t,x) = \kappa \sum_{j=1}^d u_{x_jx_j}(t,x), \qquad u(0,x) = \varphi(x). 
    \end{equation}
    Here, $\varphi$ describes the initial heat distribution. Dirichlet or Neumann boundary data complete the problem. 
    \item [2.] {\bf Black-Scholes equation}: If both $\mu$ and $\sigma$ in \eqref{eq:kolmogorov-pde} are linear functions, we obtain the Black-Scholes equation, which models the evolution in time $t$ of the price of an option $u$ that is based on $d$ underlying stocks $x_i$. Up to a straightforward change of variables, the corresponding PDE is given by (see e.g. \cite{oksendal2003stochastic}),
    \begin{equation}
    \label{eq:BS}
         u_t(t,x) = \sum_{i,j=1}^d \beta_i\beta_j\rho_{ij}x_ix_ju_{x_ix_j}(t,x) + \sum_{j=1}^d \mu x_j u_{x_j}(t,x), \qquad u(0,x) = \varphi(x). 
    \end{equation}
    Here, the $\beta_i$ are stock volatilities, the coefficients $\rho_{ij}$ model the correlation between the different stock prices, $\mu$ is an interest rate and the initial condition $\varphi$ is interpreted as a payoff function. Prototypical examples of such payoff functions are $\varphi(x) = \max\{\sum_i a_ix_i-K,0\}$ (basket call option),  $\varphi(x) = \max\{\max_i a_ix_i-K,0\}$ (call on max) and analogously for put options. 
\end{itemize}
Our goal in this paper is to approximate the classical solution $u$ of Kolmogorov equations with PINNs. We start with a brief recapitulation of neural networks below. 
\subsection{Neural Networks.}
\label{sec:dnn}
We denote by  $\sigma:\mathbb{R}\to\mathbb{R}$ be an (at least) twice continuously differentiable activation function, like tanh or sigmoid. For any $n\in\mathbb{N}$, we write for $z\in\mathbb{R}^n$ that $\sigma(z) := (\sigma(z_1), \ldots, \sigma(z_n))$. We formally define a neural network below,
\begin{definition}
\label{def:nn-app}
Let $R\in(0,\infty]$, $L,W\in\mathbb{N}$ and $l_0,\ldots, l_L\in\mathbb{N}$. Let $\sigma:\mathbb{R}\to\mathbb{R}$ be a twice differentiable function and define 
\begin{equation}
   \Theta =  \Theta_{L,W,R} := \bigcup_{L'\in\mathbb{N}, L'\leq L}\:\bigcup_{l_0,\ldots,l_L\in\{1, \ldots, W\}}\bigtimes_{k=1}^{L'} \left([-R,R]^{l_k\times l_{k-1}}\times[-R,R]^{l_k}\right). 
\end{equation}
For $\theta\in\Theta_{L,W,R}$, we define $(W_k,b_k):=\theta_k$ and $\mathcal{A}_k^\theta:\mathbb{R}^{l_{k-1}}\to\mathbb{R}^{l_{k}}:z\mapsto W_k z+b_k$ for $1\leq k\leq L$ and we define $f^\theta_k:\mathbb{R}^{l_{k-1}}\to\mathbb{R}^{l_{k}}$ by
\begin{equation}
    f_k^\theta(z) = \begin{cases}\mathcal{A}_L^\theta(z) & k=L,\\ (\sigma \circ \mathcal{A}_{k}^\theta)(z) & 1\leq k < L. \end{cases}
\end{equation}
We denote by $u_\theta:\mathbb{R}^{l_0}\to\mathbb{R}^{l_L}$ the function that satisfies for all $z \in\mathbb{R}^{l_0}$ that
\begin{equation}
\label{eq:dnn}
     u_\theta(z) = \left(f_{L}^\theta\circ f_{L-1}^\theta \circ \cdots \circ f_1^\theta\right)(z), 
\end{equation}
where in the setting of approximating Kolmogorov PDEs \eqref{eq:kolmogorov-pde} we set $l_0 = d+1$ and $z=(x,t)$.

We refer to $u_\theta$ as the realization of the neural network associated to the parameter $\theta$ with $L$ layers with widths $(l_0,l_1, \ldots, l_L)$, of which the middle $L-1$ layers are called hidden layers. For $1\leq k\leq L$, we say that layer $k$ has width $l_k$ and we refer to $W_k$ and $b_k$ as the weights and biases corresponding to layer $k$. If $L\geq 3$, we say that $u_\theta$ is a deep neural network (DNN). 
\end{definition}
\subsection{PINNs.}\label{sec:PINNs}
As already mentioned in the introduction, the key idea behind PINNs is to minimize pointwise residuals associated with the Kolmogorov PDE \eqref{eq:kolmogorov-pde}. To this end, we define the differential operator associated with \eqref{eq:kolmogorov-pde},
\begin{equation}
    \label{eq:L}
     \L{v}(x) = \sum_{i=1}^d \mu_i(x) (\partial_i v) (x,t) + \frac{1}{2} \sum_{i,j,k=1}^d \sigma_{ik}(x)\sigma_{kj}(x) (\partial^2_{ij}v)(x),
\end{equation}
for any $v \in C^2({\mathbb R}^d)$. 
Next, we define the following residuals associated with \eqref{eq:kolmogorov-pde},
\begin{equation}
    \label{eq:res}
    \begin{aligned}
    \mathcal{R}_i[v](x,t) &= \partial_t v(x,t) - \L{v}(x,t), \quad (x,t) \in D \times [0,T], \\
    \mathcal{R}_s[v](y,t) &= v(y,t) - \psi(y,t),\quad (y,t) \in \partial D \times [0,T],  \\
    \mathcal{R}_t[v](x) &= v(0,x) - \varphi(x), \quad \forall x \in D.
    \end{aligned}
\end{equation}
The \emph{generalization error} for a neural network of the form \eqref{eq:dnn}, approximating the Kolmogorov PDE is then given by the formula \eqref{eq:def-generalization-error}, but with the residuals defined in \eqref{eq:res}. 

Given the possibly very high-dimensional domain $D $ of \eqref{eq:kolmogorov-pde}, it is natural to use random sampling points to define the loss function for PINNs $\theta \mapsto \Et(\theta, \S)^2$ as follows, 
\begin{equation}
\label{eq:lf2}
\begin{aligned}
     \Et^i(\theta,\S_i)^2 &= \frac{1}{N_i}\sum_{n=1}^{N_i} \abs{\mathcal{R}_i[u_\theta](t^n_i,x^n_i)}^2, \\
      \Et^s(\theta,\S_s)^2 &= \frac{1}{N_s}\sum_{n=1}^{N_s} \abs{\mathcal{R}_s[u_\theta](t^n_s,x^n_s)}^2, \quad
      \Et^t(\theta,\S_t)^2 = \frac{1}{N_t} \sum_{n=1}^{N_t} \abs{\mathcal{R}_t[u_\theta](x^t_i)}^2,\\
      \Et(\theta,\S)^2 &= \Et^i(\theta,\S_i)^2 +  \Et^s(\theta,\S_s)^2+ \Et^t(\theta,\S_t)^2,
      \end{aligned}
\end{equation}
where the training data sets, $\S_i = \{(t^n_i,x^n_i)\}_{n}^{N_i}$, $\S_s = \{(t^n_s,x^n_s)\}_{n}^{N_s}$ and $\S_t = \{x^n_t\}_{n}^{N_t}$, are chosen randomly, independently with respect to the corresponding Lebesgue measures and the residuals $\mathcal{R}_{i,s,t}$ are defined in \eqref{eq:res}. 

A \emph{trained PINN} $u^{\ast} = u_{\theta^{\ast}}$ is then defined as a (local) minimum of the optimization problem \eqref{eq:opt}, with loss function \eqref{eq:lf2} (possibly with additional data and weight regularization terms), found by a (stochastic) gradient descent algorithm such as ADAM or L-BFGS. 
\section{Bounds on the approximation error for PINNs}
\label{sec:3}
In this section, we will first answer the question Q1 for the PINNs approximating linear Kolmogorov equations \eqref{eq:kolmogorov-pde} i.e., our aim will be to construct a deep neural network \eqref{eq:dnn} for approximating \eqref{eq:kolmogorov-pde}, such that the corresponding generalization error $\Eg$ \eqref{eq:def-generalization-error} is as small as desired. 

Recalling that the Kolmogorov PDE is a linear parabolic equation with smooth coefficients, one can use standard parabolic theory to conclude that there exists a unique classical solution $u$ of \eqref{eq:kolmogorov-pde} and it is sufficiently regular, for instance $u \in W^{s,\infty}((0,T)\times D)$ for some $s > 2$. 
As $u$ is a classical solution, the residuals \eqref{eq:res}, evaluated at $u$, vanish i.e.,
\begin{equation}
    \label{eq:resu}
\mathcal{R}_i[u](x,t) = 0, \quad \mathcal{R}_s[u](y,t) =0, \quad \mathcal{R}_t[u](x,0) = 0,
\end{equation}
for all $x \in D, y \in \partial D$.

Moreover, one can use recent results in approximation theory, such as those presented in \cite{guhring2020error,guhring2021approximation, deryck2021approximation} and references therein, to infer that one can find a deep neural network \eqref{eq:dnn} that approximates the solution $u$ in the $W^{2,\infty}$-norm, and therefore yields an approximation for which the PINN residual is small. For instance, one appeals to the following theorem (more details, including exact constants and bounds on the network weights, can be derived from the results in \cite{deryck2021approximation}).
\begin{theorem}
\label{thm:1}
Let $T>0$, $\gamma, d,s\in\mathbb{N}$ with $s\geq 2+\gamma$ and let $u\in W^{s,\infty}([0,T] \times [0,1]^d)$ be the solution of a linear Kolmogorov PDE \eqref{eq:kolmogorov-pde}. Then for every $\varepsilon>0$ there exists a tanh neural network $\widehat{u}^\varepsilon = u_{\widehat{\theta}^\varepsilon}$ with two hidden layers of width at most $\bigO(\varepsilon^{-d/(s-2-\gamma)})$ such that $\Eg(\widehat{\theta}^\varepsilon) \leq\varepsilon.$
\end{theorem}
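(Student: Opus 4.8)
The plan is to reduce the bound on the generalization error to a single Sobolev-norm approximation statement for the exact solution $u$, and then to invoke a known constructive approximation result for $\tanh$ networks.

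First I would exploit that $u$ is a classical solution, so that all three residuals in \eqref{eq:res} vanish at $u$, cf.\ \eqref{eq:resu}. Since $v\mapsto \partial_t v - \L{v}$ is linear, for any network $u_\theta$ of the form \eqref{eq:dnn} one has
\begin{equation*}
\mathcal{R}_i[u_\theta] = \partial_t(u_\theta-u) - \L{u_\theta-u}, \qquad \mathcal{R}_s[u_\theta] = (u_\theta-u)\big|_{\partial D\times[0,T]}, \qquad \mathcal{R}_t[u_\theta] = (u_\theta-u)\big|_{t=0},
\end{equation*}
where for $\mathcal{R}_s$ I used $u=\psi$ on $\partial D\times[0,T]$ and for $\mathcal{R}_t$ that $u(0,\cdot)=\varphi$. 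Because $\sigma$ and $\mu$ are affine and $D=(0,1)^d$ is bounded, the coefficients $\mu_i$ and $\tfrac12\sum_k\sigma_{ik}\sigma_{kj}$ appearing in \eqref{eq:L} are bounded on $\overline D$ by a quantity growing at most polynomially in $d$, and summing the $\bigO(d^2)$ terms gives $\|\mathcal{R}_i[u_\theta]\|_{L^\infty(D\times[0,T])}\le C_{d,\sigma,\mu}\,\|u_\theta-u\|_{W^{2,\infty}(D\times[0,T])}$ with $C_{d,\sigma,\mu}=\mathrm{poly}(d)$. Combining this with the trivial bounds $\|\mathcal{R}_s[u_\theta]\|_{L^\infty},\|\mathcal{R}_t[u_\theta]\|_{L^\infty}\le\|u_\theta-u\|_{W^{2,\infty}}$ and the fact that $D\times[0,T]$, $\partial D\times[0,T]$ and $D$ have finite measure ($T$, $2dT$ and $1$, respectively), one obtains
\begin{equation*}
\Eg(\theta)\le C\,\|u_\theta-u\|_{W^{2,\infty}([0,T]\times[0,1]^d)}, \qquad C=C(d,T,\sigma,\mu)=\mathrm{poly}(d,T).
\end{equation*}

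Next I would apply the constructive $\tanh$-approximation theorem of \cite{deryck2021approximation}: for $u\in W^{s,\infty}([0,T]\times[0,1]^d)$ with $s\ge 2+\gamma$ and any $\delta>0$, there is a $\tanh$ network $\widehat u$ with two hidden layers, width at most $\bigO(\delta^{-d/(s-2-\gamma)})$ (logarithmic factors and weight bounds being absorbed/tracked as in that reference), such that $\|u-\widehat u\|_{W^{2,\infty}([0,T]\times[0,1]^d)}\le\delta$. Taking $\delta=\varepsilon/C$ and setting $\widehat u^\varepsilon:=\widehat u$ then yields $\Eg(\widehat\theta^\varepsilon)\le\varepsilon$, with width $\bigO((\varepsilon/C)^{-d/(s-2-\gamma)})=\bigO(\varepsilon^{-d/(s-2-\gamma)})$ because $C$ is independent of $\varepsilon$; here it matters that $C=\mathrm{poly}(d)$, so the final bound is free of the curse of dimensionality.

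The hard part is not this reduction but the cited approximation result itself, which we are allowed to assume: it requires a $\tanh$ network of only \emph{two} hidden layers that simultaneously approximates $u$ \emph{and its derivatives up to order two} in $L^\infty$ with merely polynomial width growth. The standard route is a Bramble--Hilbert/Taylor polynomial approximation of $u$ on the space-time cube (algebraic $W^{2,\infty}$ rate governed by $s-2$), followed by emulating the needed monomials and their products with small two-layer $\tanh$ networks; the drop from $s-2$ to $s-2-\gamma$ and the logarithmic width overhead come precisely from controlling the second derivatives of these $\tanh$-built polynomials. Two routine points remain to be verified outside the cited result: that $u$ really has the assumed $W^{s,\infty}$-regularity (standard parabolic regularity for \eqref{eq:kolmogorov-pde} with its smooth affine coefficients and compatible data $\varphi,\psi$), and the explicit bookkeeping of $C$ through the trace estimates on $\partial D\times[0,T]$ and $\{0\}\times D$ so that no exponential-in-$d$ factor creeps in.
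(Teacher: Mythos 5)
Your proposal is correct and follows essentially the same route as the paper: invoke the $W^{2,\infty}$ simultaneous-approximation result for two-hidden-layer $\tanh$ networks from \cite{deryck2021approximation}, use linearity of the Kolmogorov operator and the vanishing of the residuals at the exact solution $u$ to bound $\Eg$ by $\|u_\theta-u\|_{W^{2,\infty}}$, and absorb the resulting constant by rescaling $\delta$. You simply make explicit the bookkeeping (the linearity step, the $\mathrm{poly}(d)$ constant, the $\delta=\varepsilon/C$ rescaling) that the paper compresses into ``it follows immediately'' and ``using a standard trace inequality''; also note that the $W^{s,\infty}$-regularity of $u$ is a hypothesis of the theorem, so it does not need separate verification.
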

\begin{proof}
It follows from \cite[Theorem 5.1]{deryck2021approximation} that there exists a tanh neural network $\widehat{u}^\varepsilon $ with two hidden layers of width at most $\bigO(\varepsilon^{-d/(s-2-\gamma)})$ such that 
\begin{equation}
    \norm{u-\widehat{u}^\varepsilon}_{W^{2,\infty}([0,T] \times [0,1]^d)} \leq \varepsilon.
\end{equation}
By virtue of the nature of linear Kolmogorov PDEs \eqref{eq:kolmogorov-pde} it follows immediately that $\norm{\mathcal{R}_i[u]}_{L^2([0,T] \times [0,1]^d)}\leq \varepsilon$. Using a standard trace inequality, one finds similar bounds for the $\mathcal{R}_s[u]$ and $\mathcal{R}_t[u]$. From this, it follows directly that $\Eg(\widehat{\theta}^\varepsilon) \leq\varepsilon.$ 
\end{proof}

Hence, $\widehat{u}^{\varepsilon}$ is a neural network for which the generalization error \eqref{eq:def-generalization-error} can be made arbitrarily small, providing an affirmative answer to Q1. 
However from Theorem \ref{thm:1}, we observe that the size (width) of the resulting deep neural network $\widehat{u}^{\varepsilon}$, grows \emph{exponentially} with spatial dimension $d$ for \eqref{eq:kolmogorov-pde}. Thus, this neural network construction clearly suffers from the \emph{curse of dimensionality}. Hence, this construction cannot explain the robust empirical performance of PINNs in approximating Kolmogorov equations \eqref{eq:kolmogorov-pde} in very high spatial dimensions \cite{MM1,RT,MMT1}. 
Therefore, we need a different approach for obtaining bounds on the generalization error that overcome this curse of dimensionality. To this end, we rely on the specific structure of the Kolmogorov equations \eqref{eq:kolmogorov-pde}. In particular, we will use the Dynkin's formula, which relates Kolmogorov PDEs to Itô diffusion SDEs.

In order to state Dynkin's formula, we first need to introduce some notation. Let $(\Omega, \mathcal{F}, P, (\mathbb{F}_t)_{t\in[0,T]})$ be a stochastic basis, $D\subseteq \mathbb{R}^d$ a compact set and, for every $x\in D$, let $X^x : \Omega\times[0,T]\to\mathbb{R}^d$ be the solution, in the Itô sense, of the following stochastic differential equation, 
\begin{equation}\label{eq:Itô-diffusion-sde}
    dX^x_t = \mu(X^x_t)dt + \sigma(X^x_t)dB_t, \quad X^x_0=x, \quad x\in D, t\in[0,T],
\end{equation}
where $B_t$ is a standard $d$-dimensional Brownian motion on $(\Omega, \mathcal{F}, P, (\mathbb{F}_t)_{t\in[0,T]})$. The existence of $X^x$ is guaranteed by Lemma \ref{lem:SDE-sol}. 
Dynkin's formula relates the generator  $\mathcal{F}$ of $X^x_t$, given in e.g. \cite{oksendal2003stochastic},
\begin{equation}
    \label{eq:defF}
    \F{X^x_t} = \sum_{i=1}^d \mu_i(X^x_t) (\partial_i\varphi) (X^x_t) + \frac{1}{2} \sum_{i,j,k=1}^d \sigma_{ik}(X^x_t)\sigma_{kj}(X^x_t) (\partial^2_{ij}\varphi)(X^x_t),
\end{equation}
with the initial condition $\varphi \in C^2(D)$ and differential operator $\mathcal{L}$ \eqref{eq:L} of the corresponding Kolmogorov PDE \eqref{eq:kolmogorov-pde}. 
Equipped with this notation, we state the Dynkin's formula below,
\begin{lemma}[Dynkin's formula]\label{lem:dynkin}
For every $x\in D$, let $X^x$ be the solution to a linear Kolmogorov SDE \eqref{eq:Itô-diffusion-sde} with affine $\mu:\mathbb{R}^d\to\mathbb{R}^d$ and  $\sigma:\mathbb{R}^d\to\mathbb{R}^{d\times d}$. If $\varphi\in C^{2}(\mathbb{R}^d)$ with bounded first partial derivatives, then it holds that $(\partial_t u)(x,t) = \L{u}(x,t)$ where $u$ is defined as
\begin{equation}\label{eq:dynkin}
    u(x,t) = \varphi(x) + \E{\int_0^t \F{X^x_\tau} d\tau}, \qquad \text{for }x\in D, t\in[0,T].
\end{equation}
\end{lemma}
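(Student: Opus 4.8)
The plan is to first identify the function $u$ in \eqref{eq:dynkin} with the expectation $u(x,t)=\E{\varphi(X^x_t)}$, and then to extract the evolution equation $(\partial_t u)(x,t)=\L{u}(x,t)$ from Itô's formula combined with the time-homogeneous Markov property of the diffusion $X^x$. \emph{Step 1 (rewriting $u$).} Since $\mu$ and $\sigma$ are affine, Lemma \ref{lem:SDE-sol} provides the strong solution $X^x$ together with moment bounds $\sup_{\tau\le T}\E{|X^x_\tau|^p}<\infty$ for all $p\ge 1$. Applying Itô's formula to $\tau\mapsto \varphi(X^x_\tau)$ (legitimate because $\varphi\in C^2$) yields
\begin{equation}
    \varphi(X^x_t) = \varphi(x) + \int_0^t \F{X^x_\tau}\,d\tau + \int_0^t (\nabla\varphi)(X^x_\tau)^T\sigma(X^x_\tau)\,dB_\tau,
\end{equation}
where the drift integrand is precisely the generator \eqref{eq:defF}. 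As $\nabla\varphi$ is bounded and $\sigma$ is affine with $X^x$ having finite second moments, the stochastic integrand is square integrable on $\Omega\times[0,T]$, so the last term is a martingale of zero expectation; taking expectations and applying Fubini gives $u(x,t)=\E{\varphi(X^x_t)}$, which also shows that the right-hand side of \eqref{eq:dynkin} is well defined.

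\emph{Step 2 (spatial regularity and the time derivative).} Using once more that the coefficients are affine, the stochastic flow $x\mapsto X^x_t$ is twice continuously differentiable in $x$, the derivative processes solving linear variational SDEs with moment bounds that are locally uniform in $x$; together with the boundedness of $\nabla\varphi$ this lets us differentiate $u(x,t)=\E{\varphi(X^x_t)}$ twice under the expectation, so $u(\cdot,t)\in C^2(D)$ with at most polynomially growing derivatives, uniformly in $t\in[0,T]$. By uniqueness of strong solutions and time-homogeneity, the Markov property gives $u(x,t+h)=\E{u(X^x_h,t)}$ for $h\ge 0$. Applying Itô's formula to $s\mapsto u(X^x_s,t)$ on $[0,h]$ — admissible by the regularity just established — and taking expectations produces
\begin{equation}
    \frac{u(x,t+h)-u(x,t)}{h} = \frac1h\,\E{\int_0^h \L{u(\cdot,t)}(X^x_s)\,ds}.
\end{equation}
Since $s\mapsto \E{\L{u(\cdot,t)}(X^x_s)}$ is continuous at $s=0$ (continuity of sample paths plus dominated convergence, using the polynomial growth from the first part of this step), letting $h\downarrow 0$ gives the right time-derivative $\partial_t^+u(x,t)=\L{u}(x,t)$; the analogous estimate, or a standard strongly-continuous semigroup argument, upgrades this to a genuine two-sided, continuous derivative, which is exactly the claimed identity.

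\emph{Main obstacle.} The technical heart of the proof is the regularity and integrability bookkeeping of Step 2 on the unbounded state space $\mathbb{R}^d$: because only the first derivatives of $\varphi$ are assumed bounded, both $\F{\cdot}$ and the second spatial derivatives of $u$ may grow, so one must verify — via the moment bounds of Lemma \ref{lem:SDE-sol} for $X^x$ and for its first and second variations — that all the expectations occurring above ($\E{\int_0^t|\F{X^x_\tau}|\,d\tau}$, $\E{\L{u(\cdot,t)}(X^x_s)}$, and the dominating functions used when differentiating under the expectation) are finite and that interchanging differentiation with expectation is justified. Once these estimates are in place, Steps 1 and the final limit in Step 2 are essentially routine applications of Itô's formula.
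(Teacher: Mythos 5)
The paper does not give a proof of Lemma \ref{lem:dynkin}: it simply cites Corollary 6.5 and Section 6.10 of Klebaner's textbook. Your proposal is therefore a full proof where the paper has none, and the route you take (It\^o's formula to obtain $u(x,t)=\E{\varphi(X^x_t)}$ and to kill the stochastic integral, then the time-homogeneous Markov property plus a second application of It\^o's formula on $[0,h]$ to produce the backward Kolmogorov equation) is exactly the standard textbook argument that the cited reference carries out. So in substance you have reconstructed the proof the paper is pointing to rather than found a genuinely different one.

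Two remarks on the rigor, both of which you partly anticipate under \emph{Main obstacle}. First, with only $\varphi\in C^2$ and bounded \emph{first} derivatives, the term $\tfrac12\sum_{ijk}\sigma_{ik}\sigma_{kj}\,\partial^2_{ij}\varphi$ in $\mathcal{F}\varphi$ need not be integrable against the law of $X^x_\tau$ on all of $\mathbb{R}^d$, since $\sigma$ is affine and $\partial^2\varphi$ is not assumed bounded; this is a gap in the lemma's hypotheses rather than in your argument, and in the paper's actual use (Theorem \ref{thm:approx-pinn}) the data is $C^5$, which is more than enough. You should flag that the expectations in Step 1 require a growth condition on $\partial^2\varphi$ (or a localization/stopping-time argument) that the stated hypotheses do not literally supply. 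Second, in Step 2 the differentiability of $x\mapsto u(x,t)$ to second order under the expectation is asserted via smoothness of the stochastic flow; for affine coefficients the flow is in fact an explicit affine map in $x$ (point (4) of Lemma \ref{lem:SDE-sol}), which makes this step essentially trivial and would be worth invoking explicitly instead of appealing to general variational-SDE machinery. With those two points tightened, the argument is correct and matches the cited reference's approach.
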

\begin{proof}
See Corollary 6.5 and Section 6.10 in \cite{klebaner2012introduction}. 
\end{proof}

Our construction of a neural network with small residual \eqref{eq:res} relies on emulating the right hand side of Dynkin's formula \eqref{eq:dynkin} with neural networks. In particular, the initial data $\varphi$ and the generator $\mathcal{F}\varphi$ will be approximated by suitable tanh neural networks. On the other hand, the expectation in \eqref{eq:dynkin} will be replaced by an accurate Monte Carlo sampling. Our construction is summarized in the following theorem,
\begin{theorem}\label{thm:approx-pinn}
Let $\alpha, \beta, \varpi, \zeta, T>0$ and let $p>2$. For every $d\in\mathbb{N}$, let $D_d = [0,1]^d$, $\varphi_d \in C^5(\mathbb{R}^d)$ with bounded first partial derivatives, let $(D_d\times [0,T],\mathcal{F},\mu)$ be a probability space and let $u_d\in C^{2,1}(D_d\times[0,T])$ be a function that satisfies
\begin{equation}
     (\partial_t u_d)(x,t) = \L{u_d}(x,t), \quad u_d(x,0) = \varphi_d(x) \quad \text{for all } (x,t) \in D_d\times [0,T]. 
\end{equation}
Moreover, assume that for every $\xi,\delta,c>0$, there exist tanh neural networks $\widehat{\varphi}_{\xi,d}: \mathbb{R}^d\to\mathbb{R}$ and $(\widehat{\mathcal{F}\varphi})_{\delta,d}: \mathbb{R}^d\to\mathbb{R}$ with respectively $\bigO(d^\alpha\xi^{-\beta})$ and $\bigO(d^\alpha \delta^{-\beta})$ neurons and weights that grow as $\bigO(d^\varpi \xi^{-\zeta})$ and $\bigO(d^\varpi \delta^{-\zeta})$ such that 
\begin{equation}\label{eq:fvarphi-acc}
    \norm{\varphi_d-\widehat{\varphi}_{\xi,d}}_{C^2(D_d)} \leq \xi \quad \text{and} \quad \norm{\mathcal{F}\varphi-(\widehat{\mathcal{F}\varphi})_{\delta,d}}_{C^2([-c,c]^d)} \leq \delta. 
\end{equation}
Then there exist constants $C,\lambda>0$ such that for every $\varepsilon>0$ and $d\in\mathbb{N}$, there exist a constant $\rho_d>0$ and a tanh neural network $\Psi_{\varepsilon,d}$ with at most $C (d\rho_d)^\lambda \varepsilon^{-\max\{5p+3, 2+p+\beta\}}$ neurons and weights that grow at most as $C (d\rho_d)^\lambda \varepsilon^{-\max\{\zeta,8p+6\}}$ for $\varepsilon\to 0$ such that
\begin{align}\label{eq:pinn-int-small}
\begin{split}
    \norm{\partial_t \Psi_{\varepsilon,d}- \L{\Psi_{\varepsilon,d}}}_{L^2(D_d\times [0,T])} + 
    \norm{\Psi_{\varepsilon,d}- u_d}_{H^1(D_d\times [0,T])} + \norm{ \Psi_{\varepsilon,d}- u_d}_{L^2(\partial(D_d\times [0,T]))}\leq \varepsilon.
\end{split}\end{align}
Moreover, $\rho_d$ is defined as 
\begin{equation}\label{eq:rhod}
    \rho_d :=  \max_{x\in D_d}\sup_{\substack{s,t \in [0,T],\\s<t}} \frac{\norm{X^x_s-X^x_t}_{\mathcal{L}^q(P, \norm{\cdot}_{\mathbb{R}^d})}}{\abs{s-t}^{\frac{1}{p}}} <\infty,  
\end{equation}
where $X^x$ is the solution, in the Itô sense, of the SDE \eqref{eq:Itô-diffusion-sde} and $q>2$ is independent of $d$. 
\end{theorem}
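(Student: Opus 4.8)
The plan is to emulate Dynkin's formula \eqref{eq:dynkin}, which represents the solution as $u_d(x,t) = \varphi_d(x) + \E{\int_0^t \F{X^x_\tau}\,d\tau}$, by a single tanh network, through four successive approximations: (i) discretize the time integral; (ii) replace the expectation by a Monte Carlo average over finitely many Brownian paths; (iii) replace $\varphi_d$ and $\mathcal{F}\varphi$ by the tanh networks $\vpa$ (below $\widehat\varphi_{\xi,d}$) and $(\widehat{\mathcal{F}\varphi})_{\delta,d}$ supplied by hypothesis \eqref{eq:fvarphi-acc}; and (iv) assemble everything into one network, replacing the remaining exact operations — the products of the $x$-dependent pieces with the $t$-dependent quadrature weights — by approximate multiplication networks $\mult$ and a small network $\avpa$ emulating the (Lipschitz, piecewise-linear) time weights. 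Since $u_d$ solves $\partial_t u_d = \L{u_d}$ exactly, we have $\partial_t\Psi_{\varepsilon,d} - \L{\Psi_{\varepsilon,d}} = \partial_t(\Psi_{\varepsilon,d}-u_d) - \L{\Psi_{\varepsilon,d}-u_d}$, so the whole left-hand side of \eqref{eq:pinn-int-small} is controlled once $\Psi_{\varepsilon,d}-u_d$ is small in a norm dominating one time derivative and two spatial derivatives (plus an $H^1$-controlled boundary trace). For this reason every step below is estimated in a $C^2$/Sobolev-type norm — which is also why the hypotheses require $\varphi_d\in C^5$ and $C^2$-accuracy in \eqref{eq:fvarphi-acc}. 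A structural fact used throughout is that, because $\mu$ and $\sigma$ are affine, the stochastic flow of \eqref{eq:Itô-diffusion-sde} is itself affine, $X^x_t = \Phi_t x + \eta_t$ a.s. with $\Phi_t,\eta_t$ solving linear SDEs independent of $x$; hence for a frozen Brownian path $x\mapsto\F{X^x_{t_k}}$ is the composition of $(\widehat{\mathcal F\varphi})_{\delta,d}$ with an affine map (again a network of the same size, with weights inflated by $\norm{\Phi_{t_k}}$), and $\partial_x X^x_t = \Phi_t$, $\partial_x^2 X^x_t = 0$ give clean control of spatial derivatives after differentiating under the expectation.

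Concretely, set $h=T/N$, $t_k = kh$, $n_0(t)=\lfloor t/h\rfloor$, and let
\begin{equation*}
\ubar(x,t) = \varphi_d(x) + \E{\sum_{k=0}^{n_0(t)-1} h\,\F{X^x_{t_k}} + \left(t-\nO\right)\F{X^x_{t_{n_0(t)}}}},
\end{equation*}
which is continuous and piecewise-linear in $t$. Step 1 (time error): using $\partial_t u_d = \tfrac{d}{dt}\E{\int_0^t \F{X^x_\tau}d\tau}$ together with $\E{\norm{(\mathcal F\varphi)(X^x_s)-(\mathcal F\varphi)(X^x_t)}} \le \mathrm{Lip}(\mathcal F\varphi)\,\E{\norm{X^x_s-X^x_t}} \le \mathrm{Lip}(\mathcal F\varphi)\,\rho_d\,\abs{s-t}^{1/p}$ and its differentiated versions (which bring in moments of $\Phi_\tau$), one gets $\norm{u_d-\ubar} \le C(d,\rho_d)\,h^{1/p}$. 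Step 2 (Monte Carlo): let $(X^{x,(m)})_{m=1}^M$ be i.i.d. copies of $X^x$ and let $\unn$ be $\ubar$ with $\E{\cdot}$ replaced by $\tfrac1M\sum_m(\cdot)$; independence gives $\mathbb E\norm{\ubar-\unn}^2 \le M^{-1}V(d,\rho_d)$, with $V$ bounding second moments of $\F{X^x_{t_k}}$ and its derivatives, while on a high-probability event $c:=\max_{m,k,x\in D_d}\norm{X^{x,(m)}_{t_k}}$ and $\max_{m,k}\norm{\Phi^{(m)}_{t_k}}$ are bounded by a polynomial in $d,\rho_d$ and $\log M$; a union bound fixes one realization of the paths with $\norm{\ubar-\unn}\le M^{-1/2}\sqrt{V(d,\rho_d)}$ and bounded flow maps, after which all affine maps $x\mapsto X^{x,(m)}_{t_k}$ are deterministic. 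Step 3: form $\utilde$ by replacing $\varphi_d$ with $\widehat\varphi_{\xi,d}$ and each $\F{X^{x,(m)}_{t_k}}$ with $(\widehat{\mathcal F\varphi})_{\delta,d}$ precomposed with the corresponding affine map; since composition with an affine map of norm $\le c$ multiplies a $C^2$-error by $\le\max\{1,c\}^2$ and there are at most $MN$ such terms carrying weights summing to $\le T$, one gets $\norm{\unn-\utilde}\le C(\xi + T c^2\delta)$. Step 4: replace the exact products by $\mult$ and the weights $t\mapsto(t-\nO)$, $t\mapsto h\mathbbm{1}\{t_k\le t\}$ by the small network $\avpa$, then parallelize the $\bigO(MN)$ subnetworks and add a linear output layer to obtain $\Psi_{\varepsilon,d}$ with $\norm{\utilde-\Psi_{\varepsilon,d}}\le C(c^2\eta + (\text{weight-network error}))$; it has $\bigO(MN(d^\alpha\delta^{-\beta}+\mathrm{size}(\mult)+\mathrm{size}(\avpa)) + d^\alpha\xi^{-\beta})$ neurons and weights of size $\bigO(c\,d^\varpi\delta^{-\zeta})$ up to the growth introduced by $\mult$ and $\avpa$.

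It then remains to calibrate: choose $N\sim(\rho_d/\varepsilon)^p$ so the time error is $\lesssim\varepsilon$, $M\sim V(d,\rho_d)\varepsilon^{-2}$ so the Monte Carlo error is $\lesssim\varepsilon$, and $\xi,\delta,\eta$ polynomially small in $\varepsilon$, small enough to absorb the $MN$ and $c^2$ amplifications (themselves polynomial in $\varepsilon^{-1},d,\rho_d$). Collecting the resulting powers of $\varepsilon^{-1}$ gives \eqref{eq:pinn-int-small}: the neuron count is $\bigO(MN\cdot\delta^{-\beta})$-dominated, giving the exponent $2+p+\beta$, while the competing $5p+3$ and the weight exponent $8p+6$ come from the finer bookkeeping of $\mult$ and $\avpa$ and from the moment order $q>2$ needed in the Hölder / Burkholder–Davis–Gundy estimates; all prefactors are polynomial in $d$ and $\rho_d$, which yields the constant of the form $C(d\rho_d)^\lambda$. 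Finiteness of $\rho_d$ in \eqref{eq:rhod} is part of the argument and follows from the Burkholder–Davis–Gundy inequality and Gronwall's lemma applied to \eqref{eq:Itô-diffusion-sde} with affine coefficients.

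The main obstacle is to run Steps 1–4 in the strong ($C^2$/derivative) norms rather than merely in $L^2$, while keeping every constant polynomial in the dimension: this requires differentiating Dynkin's formula under the expectation and bounding all moments of $\Phi_t$ and $X^x_t$ by polynomials in $d$ (and relating the relevant Hölder constant to $\rho_d$), choosing the Brownian realization in Step 2 so as to simultaneously control the Monte Carlo error and the ranges of all the affine maps, and — most delicately — controlling the approximate multiplication $\mult$ and the time-weight network $\avpa$ in the $C^2$ norms that enter the residual, uniformly over the $\varepsilon$- and $d$-dependent ranges of their arguments.
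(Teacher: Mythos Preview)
Your proposal is correct and follows essentially the same approach as the paper: emulate Dynkin's formula by (i) a piecewise-linear time quadrature, (ii) a Monte Carlo average over finitely many Brownian paths (with one realization fixed so that both the sampling error and the ranges $\max_{m,k}\norm{X^{x,(m)}_{t_k}}$ are simultaneously controlled, exploiting the affine flow $X^x_t=\Phi_t x+\eta_t$), and (iii)--(iv) replacement of $\varphi$, $\mathcal F\varphi$, the time-weight function, and the products by tanh subnetworks, all while tracking one time derivative and $\mathcal L$ in $L^2$. The paper's implementation differs only in minor bookkeeping (it uses $\mathbb E[X]\le C_0\Rightarrow \mathbb P(X\le C_0)>0$ rather than a union bound, and it writes the time weights via $h(x)=\max\{0,\min\{x,1\}\}$), and the exponents $5p+3$ and $8p+6$ indeed arise exactly from the size and weight growth of the network approximating this piecewise-linear $h$.
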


\begin{proof}
Based on the Dynkin's formula of Lemma \ref{lem:dynkin}, we will construct a tanh neural network, denoted by $\unn$ for some $M,N\in\mathbb{N}$, and we will prove that the PINN residual \eqref{eq:res} of $\unn$ is small. To do so, we need to define intermediate approximations $\ubar$ and $\utilde$. In this proof, $C>0$ will denote a constant that will be updated throughout and can only depend on $d$, $D$, $\mu$, $T$, $\varphi$ and $\mathcal{L}$, i.e., not on $M$ nor $N$. In particular, the dependence of $C$ on the input dimension $d$ will be of interest. We will argue that the final value of $C$ will depend polynomially on $d$ and $\rho_d$ \eqref{eq:rhod}. Because of the third point of Lemma \ref{lem:SDE-sol}, the quantity within the maximum in the definition of $\rho_d$ \eqref{eq:rhod} is finite for every individual $x\in D$ and hence the maximum of this quantity over $x\in\{0,e_1, \ldots, e_d\}$ will be finite as well. As a result of the fourth point of Lemma \ref{lem:SDE-sol} it then follows that $\rho_d<\infty$. Moreover, if $\rho_d$ depends polynomially on $d$, then so will $C$. For notational simplicity, we will not explicitly keep track of the dependence of $C$ on $d$ and $\rho_d$. 

Next, we observe that
\begin{equation}
    \max_{x\in D}\sup_{t\in [0,T]}\norm{X^x_t}_{\mathcal{L}^q(P, \norm{\cdot}_{\mathbb{R}^d})} \leq  \max_{x\in D}\sup_{t\in [0,T]}\left(\norm{x}_{\mathbb{R}^d}+ t^{\frac{1}{p}} \frac{\norm{X^x_t-x}_{\mathcal{L}^q(P, \norm{\cdot}_{\mathbb{R}^d})}}{t^{\frac{1}{p}}}\right) \leq  \max_{x\in D}\norm{x}_{\mathbb{R}^d} + (1+T^{\frac{1}{p}})\rho_d, 
\end{equation}
such that the left-hand side also grows at most polynomially in $d$ and $\rho_d$. 

Finally, we will denote by $\norm{\cdot}_2$ the norm $\norm{\cdot}_{L^2(D\times[0,T])}$ and to simplify notation we will write $u:=u_d$ and $D:=D_d$. 

\textbf{Step 1: from $u$ to $\ubar$.} In the first step we approximate the temporal integral in \eqref{eq:dynkin} by a Riemann sum, that can be readily approximated by neural networks. To this end, let $h:\mathbb{R}\to \mathbb{R}$ be defined by 
$h(x) = \max\{0,\min\{x,1\}\}$. Then we define for $N\in\mathbb{N}$,
\begin{equation}
    \ubar(x,t) = \varphi(x) + \frac{T}{N}\sum_{n=1}^N\E{h\left(\frac{Nt}{T}-n\right)\cdot \F{X^x_{\frac{nT}{N}}}}.
\end{equation}
We first define $n_0(t) = \lfloor Nt/T \rfloor$ and calculate for $t\in\left(\nO,\frac{(n_0(t)+1)T}{N}\right)$,
\begin{equation}
    \partial_t(\ubar-u) = \E{\F{X^x_\nO}-\F{X^x_t}}.
\end{equation}
Next, we make the observation that there exist constants $a_i, b_i, c_{ij}$ (that only depend on the coefficients of $\mu$ and $\sigma$) and functions $\Lambda_i$, $\Psi_i$ and $\Phi_{ij}$ (that linearly depend on $\varphi$ and its derivatives) such that
\begin{equation}
    \F{Z^x} = \sum_{i=1}^d a_i \Lambda_i(Z^x) + \sum_{i=1}^d b_i Z^x_i \Psi_i(Z^x) + \sum_{i,j=1}^d c_{ij}Z^x_i Z^x_j \Phi_{ij}(Z^x) 
\end{equation}
for any $d$-dimensional stochastic process $Z^x$. If we define $x$ to be random variable that is uniformly distributed on $D$, we can use the Lipschitz continuity of $\Lambda_i$ and the temporal regularity of $X^x$ (property (3) of Lemma \ref{lem:SDE-sol} with $\lambda\leftarrow x$) to see that
\begin{equation}
   \sup_{t\in[0,T]} \int_D \E{\abs{\Lambda_i(X^x_\nO)-\Lambda_i(X^x_t)}^2}dx \leq C\sup_{t\in[0,T]} \int_D \E{\norm{X^x_\nO-X^x_t}^2}dx \leq \frac{C}{N^{\frac{2}{p}}}.
\end{equation}
Similarly, we find using Lemma \ref{lem:SDE-sol} and the generalized Hölder inequality with $q>0$ such that $\frac{1}{p}+\frac{1}{q}=\frac{1}{2}$,
\begin{align}
\begin{split}
    &\sup_{t\in[0,T]}\left(\int_D \E{\abs{(X^x_\nO)_i\Psi_i(X^x_\nO)-(X^x_t)_i\Psi_i(X^x_t)}^2}dx\right)^{1/2}\\
    &\quad \leq \sup_{t\in[0,T]}\left(\int_D \E{\abs{(X^x_\nO)_i-(X^x_t)_i}^p}dx\right)^{1/p}\left(\int_D \E{\abs{\Psi_i(X^x_\nO)}^q}dx\right)^{1/q} \\
    & \qquad + \sup_{t\in[0,T]}\left(\int_D \E{\abs{(X^x_t)_i}^q}dx\right)^{1/q}\left(\int_D \E{\abs{\Psi_i(X^x_\nO)-\Psi_i(X^x_t)}^p}dx\right)^{1/p}\\
    &\quad \leq  \sup_{t\in[0,T]} C \left(\int_D \E{\norm{X^x_\nO-X^x_t}^p}dx\right)^{1/p} \leq \frac{C}{N^{1/p}}.
\end{split}
\end{align}
Using also the fact that
\begin{equation}
    \sup_{t\in[0,T]}\left(\int_D \E{\abs{Z^x_iZ^x_j}^q}dx\right)^{1/q} \leq  \sup_{t\in[0,T]}\left(\int_D \E{\abs{Z^x_i}^{2q}}dx\right)^{1/2q} \sup_{t\in[0,T]}\left(\int_D \E{\abs{Z^x_j}^{2q}}dx\right)^{1/2q},
\end{equation}
we can find that
\begin{equation}
    \sup_{t\in[0,T]}\left(\int_D \E{\abs{(X^x_\nO)_i(X^x_\nO)_j\Phi_{ij}(X^x_\nO)-(X^x_t)_i(X^x_t)_j\Phi_{ij}(X^x_t)}^2}dx\right)^{1/2} \leq \frac{C}{N^{1/p}}.
\end{equation}
As a result, we find that
\begin{equation}\label{eq:u-ubar-t}
    \norm{\partial_t(\ubar-u)}_2 \leq \frac{C}{N^{1/p}}.
\end{equation}
In a similar fashion, one can also find that
\begin{equation}\label{eq:u-ubar-L}
    \norm{\L{u-\ubar}}_2 \leq \frac{C}{N^{1/p}}.
\end{equation}
To obtain this result, one can use that for all $x\in\mathbb{R}^d$ and $t\in[0,T]$ it holds that
\begin{equation}
    X^x_t = \sum_{i=1}^d (X^{e_i}_t-X^0_t)x_i + X^0_t, 
\end{equation}
see Lemma \ref{lem:SDE-sol}. Using this, and writing $X^{\cdot}_t:D\to\mathbb{R}:x\mapsto X^x_t$, one can calculate that $\L{\F{X^{\cdot}_t}}(x)$ is a linear combination of terms of the form $(X^{y_1}_t)_{k_1} \cdots (X^{y_r}_t)_{k_r} F(X^x_t) G(x)$ for $y_1,\ldots,y_r\in\{0,e_1, \ldots e_d\}$, $1\leq k_1, \ldots, k_r\leq d$ (with $r$ independent of $d$) and where $F$ is a linear combination of $\varphi$ and its partial derivatives and $G$ is a product of $\mu$ and $\sigma$ and their derivatives. 
Using these observations and the fact that $\rho_d<\infty$, one can obtain \eqref{eq:u-ubar-L}.
Moreover, very similar yet tedious computations yield,
\begin{equation}\label{eq:u-ubar-H1}
    \norm{u-\ubar}_{H^1(D\times [0,T])}\leq \frac{C}{N^{1/p}}.
\end{equation}

\textbf{Step 2: from $\ubar$ to $\utilde$.}
We continue the proof by constructing a Monte Carlo approximation of $\ubar$. For this purpose, we randomly draw $\omega_m \in \Omega$ for all $m\in\mathbb{N}$ and define for every $M,N\in\mathbb{N}$ the random variable
\begin{equation}
    U^{M,N}(x,t) = \varphi(x) + \frac{T}{MN}\sum_{n=1}^N\sum_{m=1}^M h\left(\frac{Nt}{T}-n\right)\cdot \F{X^x_{\frac{nT}{N}}(\omega_m)}.
\end{equation}
Using the same arguments as in the proofs of \eqref{eq:u-ubar-t} and $\eqref{eq:u-ubar-L}$, we find for all $(x,t)\in D\times[0,T]$ and $q\in\{t, x_1, \ldots x_d\}$ that,
\begin{equation}
    \E{\left(\partial_q U^{1,N}(x,t)-\E{\partial_q U^{1,N}(x,t)}\right)^2} \leq C \quad \text{and}\quad \partial_q \ubar(x,t) = \E{\partial_q U^{1,N}(x,t)}. 
\end{equation}
Invoking Lemma \ref{lem:MC}, we find that
\begin{equation}
    \E{\norm{\partial_q(U^{M,N}-u)}_2} \leq \frac{C}{\sqrt{M}}.
\end{equation}
Similarly, one can prove that
\begin{equation}
    \E{\left(\L{U^{1,N}}(x,t)-\E{\L{U^{1,N}}(x,t)}\right)^2} \leq C \quad \text{and}\quad  \L{u}(x,t) = \E{\L{U^{1,N}}(x,t)}.
\end{equation}
This can be proven using the same arguments as in the proof of \eqref{eq:u-ubar-L}. Using again Lemma \ref{lem:MC} and Lemma \ref{lem:SDE-sol}, and in combination with our previous result, we find that there is a constant $C_0>0$ independent of $M$ (and with the same properties of $C$ in terms of dependence on $d$) such that
\begin{equation}
    \E{\max_{0\leq n\leq N}\max_{y\in\{0,e_1, \ldots e_d\}}\norm{X^y_{\frac{nT}{N}}}_{\mathbb{R}^d}+\sqrt{M}\norm{U^M-u}_{H^1(D\times [0,T])}+\sqrt{M}\norm{\L{U^M-u}}_2} \leq C_0
\end{equation}
and therefore by Lemma \ref{lem:exp-to-prob} that
\begin{equation}
    \mathbb{P}\left(\max_{0\leq n\leq N}\max_{y\in\{0,e_1, \ldots e_d\}}\norm{X^y_{\frac{nT}{N}}}_{\mathbb{R}^d}+\sqrt{M}\norm{U^M-u}_{H^1(D\times [0,T])}+\sqrt{M}\norm{\L{U^M-u}}_2 \leq C_0 \right)>0.
\end{equation}
The fact that this event has a non-zero probability implies the existence of some \textit{fixed} $\omega_m \in \Omega$, $1\leq m\leq M$, such that for the function
\begin{equation}\label{eq:utilde}
    \utilde(x,t) = \varphi(x) + \frac{T}{MN}\sum_{n=1}^N\sum_{m=1}^Mh\left(\frac{Nt}{T}-n\right)\cdot \F{X^{x}_{\frac{nT}{N}}(\omega_m)}
\end{equation}
it holds for all $1\leq m\leq M$ that
\begin{align}\label{eq:ubar-utilde}
\begin{split}
   \norm{\utilde-u}_{H^1(D\times [0,T])}+\norm{\L{\utilde-u}}_2 &\leq \frac{C_0}{\sqrt{M}} \quad \text{and} \quad \max_{0\leq n\leq N}\max_{y\in\{0,e_1, \ldots e_d\}}\norm{X^y_{\frac{nT}{N}}(\omega_m)}_{\mathbb{R}^d} \leq C_0.
    \end{split}
\end{align}

\textbf{Step 3: from $\utilde$ to $\unn$.}
For every $\epsilon>0$ and $N=N(\epsilon)\in\mathbb{N}$, let $h_\epsilon$ be a tanh neural network such that
\begin{equation}\label{eq:h-acc}
    \norm{h_\epsilon-h}_{L^\infty(\mathbb{R})}\leq \epsilon, \quad \norm{h_\epsilon'-\chi_{[0,1]}}_{L^2([-N,N])}\leq \epsilon\quad \text{and} \quad \norm{h_\epsilon'}_{L^\infty(\mathbb{R})}\leq 2, 
\end{equation}
where $\chi_{[0,1]}$ denotes the indicator function on $[0,1]$. The existence of this neural network is guaranteed by Lemma \ref{lem:h-hat}. Moreover, for $C_1 = \max_{x\in[-C_0,C_0]^d}\Fhat{x}$, we denote the multiplication operator $\times: [-2,2]\times [-2C_1,2C_1]\to\mathbb{R}: (x,y)\mapsto xy$
and every $\eta>0$, we define $\mult:[-2,2]\times [-2C_1,2C_1]\to\mathbb{R}$ to be a tanh neural network such that
\begin{equation}\label{eq:mult-acc}
    \norm{\times-\mult}_{C^2([-2,2]\times [-2C_1,2C_1])} \leq \eta. 
\end{equation}
If we now in \eqref{eq:utilde} replace $\varphi$ and $\mathcal{F}\varphi$ by $\vpa$ and $(\widehat{\mathcal{F}\varphi})_\delta$ as from \eqref{eq:fvarphi-acc}, $h$ by $h_\epsilon$ and $\times$ by $\mult$, then we end up with the tanh neural network
\begin{equation}\label{eq:unn}
    \unn(x,t) = \vpa(x) + \frac{T}{MN}\sum_{n=1}^N\sum_{m=1}^M \mult\left(h_\epsilon \left(\frac{Nt}{T}-n\right), \Fhat{X^{x,m}_{\frac{nT}{N}}}\right).
\end{equation}
A sketch of this network can be found in Figure \ref{fig:flowchart}.  
\begin{figure}

\tikzstyle{block} = [rectangle, draw, 
    text centered, rounded corners, minimum height=3em, minimum width=2em]
\tikzstyle{circ} = [circle, draw, 
    text centered, rounded corners, minimum height=3em, minimum width=2em]
\tikzstyle{line} = [draw, -latex']
\centering
\resizebox{0.7\textwidth}{!}{
\begin{tikzpicture}

\draw[rounded corners, gray] (0, -0.25) rectangle (6, 4.5);
\draw[rounded corners, gray] (-2.75, 1) rectangle (-0.25, 3);
\node[gray] at (-2,3.25)  {\textit{for every} $n$};
\node[gray] at (1,4.75)  {\textit{for every} $m,n$};
\node[gray] at (3.5,-1)  {\textit{affine}};

\def\up{6}
\node[block] at (5,\up) (f1) {$\unn(x,t)$};

\def\up{3.5}
\node[block] at (3,\up) (e1) {$\mult\left(h_\epsilon \left(\frac{Nt}{T}-n\right), \Fhat{X^{x,m}_{\frac{nT}{N}}}\right)$};

\def\up{2}

\node[block] at (-1.5,\up) (d1) {$h_\epsilon \left(\frac{Nt}{T}-n\right)$};
\node[block] at (3,\up) (d2) {$\Fhat{X^{x,m}_{\frac{nT}{N}}}$};
\node[block] at (8,\up) (d3) {$\vpa(x)$};

\def\up{0.5}
\node[block] at (3,\up) (c2) {$X^{x,m}_{\frac{nT}{N}}$};

\def\up{-2}
\node[circ] at (-1.5,\up) (b1) {$t$};
\node[circ] at (5,\up) (b2) {$x$};

\path [line] (b1) -- (d1);
\path [line] (b2) -- (c2);
\path [line] (c2) -- (d2);
\path [line] (d2) -- (e1);
\path [line] (d1) -- (e1);

\path [line] (b2) -- (d3);
\path [line] (d3) -- (f1);

\path [line] (e1) -- (f1);

\end{tikzpicture}
}
\caption{Flowchart to visualize the construction of the neural network $\unn(x,t) = \vpa(x) + \frac{T}{MN}\sum_{n=1}^N\sum_{m=1}^M \mult\left(h_\epsilon \left(\frac{Nt}{T}-n\right), \Fhat{X^{x,m}_{\frac{nT}{N}}}\right)$. }
\label{fig:flowchart}
\end{figure}
In what follows, we will write $\partial_1$ for the partial derivative to the first component and we will write
\begin{equation}
    y_1 = h_\epsilon\left(\frac{Nt}{T}-n_0(t)\right), \quad y_2 = \Fhat{X^x_{\frac{n_0(t)T}{N}}(\omega_m)} \quad y_3 = \frac{Nt}{T}-n_0(t), \quad \text{and} \quad y_4 = X^x_{\frac{n_0(t)T}{N}}(\omega_m). 
\end{equation}
It holds that
\begin{align}
\begin{split}
\norm{\partial_t (\unn-\utilde)}_2 &\leq \frac{1}{M}\sum_{m=1}^M\norm{\sum_{n\neq n_0(t)}\partial_1 \mult\left(y_1,y_2\right)h_\epsilon'\left(\frac{Nt}{T}-n\right)}_2\\
& + \frac{1}{M}\sum_{m=1}^M \norm{\partial_1 \mult\left(y_1,y_2\right)h_\epsilon'\left(y_3\right)-\F{y_4}}_2.
\end{split}
\end{align}
Using \eqref{eq:mult-acc}, we find that 
\begin{align}
\begin{split}
     \frac{1}{M}\sum_{m=1}^M\norm{\sum_{n\neq n_0(t)}\partial_1 \mult\left(y_1,y_2\right)h_\epsilon'\left(\frac{Nt}{T}-n\right)}_2 &\leq CN \norm{\mult}_{C^2}\epsilon \leq CN\epsilon.
\end{split}
\end{align}
For the other term, we calculate using \eqref{eq:fvarphi-acc}, \eqref{eq:h-acc} and \eqref{eq:mult-acc} that 
\begin{align}
    \begin{split}
        &\norm{\partial_1 \mult\left(y_1,y_2\right)h_\epsilon'\left(y_3\right)-\F{y_4}}_2 \\& \leq \norm{h_\epsilon'\left(y_3\right)(\partial_1 \mult\left(y_1,y_2\right)-y_2) + h_\epsilon'\left(y_3\right)(\Fhat{y_4}-\F{y_4}) + \F{y_4}(h_\epsilon'\left(y_3\right)-\chi_{[0,1]}(y_3))}_2 \\
        &\leq C\norm{h_\epsilon'}_\infty \norm{\times - \mult}_{C^2} + C\norm{h_\epsilon'}_\infty\norm{(\widehat{\mathcal{F}\varphi})_\delta - \mathcal{F}\varphi}_{C^2} + \norm{\mathcal{F}\varphi}_\infty \norm{h'_\epsilon-\chi_{[0,1]}}_{L^2}\\
        &\leq C(\eta+\delta+\epsilon).
    \end{split}
\end{align}
Thus, we find that
\begin{equation}\label{eq:utilde-unn-t}
    \norm{\partial_t (\unn-\utilde)}_2 \leq C\left(N\epsilon + \eta+\delta\right)
\end{equation}
Finally, we obtain a bound on $ \norm{\L{\utilde-\unn}}_2$. We simplify notation again by setting
\begin{equation}
    z_1 = h_\epsilon\left(\frac{Nt}{T}-n\right), \quad z_2 = \Fhat{X^x_{\frac{nT}{N}}(\omega_m)} \quad z_3 = \frac{Nt}{T}-n, \quad \text{and} \quad z_4 = X^x_{\frac{nT}{N}}(\omega_m). 
\end{equation}
We start off by calculating
\begin{align}
    \begin{split}
\L{\utilde-\unn}  = & \L{\varphi-\vpa} + \frac{T}{MN}\sum_{m=1}^M\sum_{n=1}^N h\left(z_3\right) \cdot \L{\F{X^\cdot_{\frac{nT}{N}}(\omega_m)}}(x) \\
& - \frac{T}{MN}\sum_{m=1}^M\sum_{n=1}^N \L{\mult\left(z_1,\Fhat{X^\cdot_{\frac{nT}{N}}(\omega_m)} \right)}(x).
    \end{split}
\end{align}
Explicitly working out the above formula is straightforward, but tedious, and we omit the calculations for the sake of brevity. From this, together with a repeated use of the triangle inequality and \eqref{eq:ubar-utilde}, we find that 
\begin{align}\label{eq:utilde-unn-L}
    \begin{split}
        \norm{\L{\utilde-\unn}}_2 &\leq C \left(\norm{\varphi-\vpa}_{C^2}+\norm{\times-\mult}_{C^2}+\norm{\mathcal{F}\varphi-(\widehat{\mathcal{F}\varphi})_\delta}_{C^2}+\norm{h_\epsilon-h}_{L^\infty(\mathbb{R})}\right) \\&\leq C(\xi+\eta+\delta+\epsilon).
    \end{split}
\end{align}
Moreover, using similar tools as above we also find that
\begin{equation}\label{eq:utilde-unn-H1}
    \norm{\utilde-\unn}_{H^1(D\times [0,T])}\leq C\left(N\epsilon + \xi+\eta+\delta\right). 
\end{equation}
\textbf{Step 4: Total error bound. } From the triangle inequality and inequalities \eqref{eq:u-ubar-t}, \eqref{eq:ubar-utilde}, \eqref{eq:utilde-unn-t}, \eqref{eq:utilde-unn-L} and \eqref{eq:u-ubar-L}, we get that
\begin{align}
    \begin{split}
        \norm{\partial_t\unn-L{\unn}}_2\leq&\: \norm{\partial_t(\unn-\utilde)}_2 + \norm{\partial_t(\utilde-\ubar)}_2 + \norm{\partial_t(\ubar-u)}_2\\&+\norm{\L{u-\ubar}}_2+\norm{\L{\ubar-\utilde}}_2+\norm{\L{\utilde-\unn}}_2\\
        \leq& \: C\left(\frac{1}{N^{1/p}}+\frac{1}{\sqrt{M}}+(N\epsilon + \eta+\delta)+(\xi+\eta+\delta+\epsilon)+\frac{1}{\sqrt{M}}+\frac{1}{N^{1/p}}\right)\\
         \leq& \: C\left(\frac{1}{N^{1/p}}+\frac{1}{\sqrt{M}}+N\epsilon + \eta+\delta+\xi\right).
    \end{split}
\end{align}
Similarly, the triangle inequality together with inequalities \eqref{eq:u-ubar-H1}, \eqref{eq:ubar-utilde} and \eqref{eq:utilde-unn-H1} gives us,
\begin{equation}
\label{eq:err1}
    \norm{\unn-u}_{H^1(D\times [0,T])}\leq C\left(\frac{1}{N^{1/p}}+\frac{1}{\sqrt{M}}+N\epsilon + \eta+\delta+\xi\right).
\end{equation}
Combining this result with a multiplicative trace inequality (e.g. \cite[Theorem 3.10.1]{naep}) provides us with the result
\begin{equation}
    \norm{\unn-u}_{L^2(\partial(D\times [0,T]))}\leq C\left(\frac{1}{N^{1/p}}+\frac{1}{\sqrt{M}}+N\epsilon + \eta+\delta+\xi\right).
\end{equation}
\textbf{Step 5: network size.} Recall that we need a tanh neural network with $\bigO(d^\alpha \delta^{-\beta})$ neurons to approximate $\mathcal{F}\varphi$ to an accuracy of $\delta>0$. Similary for approximating $\varphi$, we need a tanh neural network with $\bigO(d^\alpha \xi^{-\beta})$ neurons. 

We first determine the complexity of the network sizes in terms of $\varepsilon$. The network will consist of multiple sub-networks, as illustrated in Figure \ref{fig:flowchart}. The first part constructs $M\cdot N$ copies of $(\widehat{\mathcal{F}\varphi})_\delta$, leading to a subnetwork with $\bigO\left(MN\delta^{-\beta}\right) = \bigO\left(\varepsilon^{-2-p-\beta}\right)$ neurons. Next, we need $N$ copies of $h_\epsilon$. From Lemma \ref{lem:h-hat} it follows that for each copy, one needs a subnetwork with two hidden layers of width $\bigO\left(N^{\frac{1}{2(1-\gamma)}}\epsilon^{\frac{-3}{1-\gamma}}\right)$ for any $\gamma>0$. One can calculate that $N$ copies of this lead to a width of $\bigO\left(N^{1+\frac{1}{2(1-\gamma)}}\epsilon^{\frac{-3}{1-\gamma}}\right) = \bigO\left(\varepsilon^{-5p-3}\right)$. 
The subnetwork approximating $\varphi$ consists of $\bigO(\xi^{-\beta}) = \bigO(\varepsilon^{-\beta})$ neurons. We assume that the subnetworks to approximate the identity function have a size that is negligible compared to the network sizes of the other parts \cite{deryck2021approximation}. Combining these observations with the fact that $C$ depends polynomially on $d$ and $\rho_d$, we find that there exists a constant $\lambda>0$ such that the number of neurons of the network is bounded by $\bigO((d\rho_d)^\lambda \varepsilon^{-\max\{5p+3, 2+p+\beta\}})$. 

By assumption, the weights of $(\widehat{\mathcal{F}\varphi})_\delta$ and $\widehat
\varphi_\xi$ scale as $\bigO(\varepsilon^{-\zeta})$. From \cite[Corollary 3.7]{deryck2021approximation}, it follows that the weights of $\mult$ scale as $\bigO(\varepsilon^{-1/2})$. Finally, from Lemma \ref{lem:h-hat}, the weights of $\hat{h}_\epsilon$ scale as $\bigO\left(N^{\frac{1}{(1-\gamma)}}\epsilon^{\frac{-6}{1-\gamma}}\right) = \bigO\left(\varepsilon^{-8p-6}\right)$. Hence, the weights of the total network $\unn$ grow as $\bigO\left((d\rho_d)^\lambda\varepsilon^{-\max\{\zeta,8p+6\}}\right)$, where we possibly adapted the size of $\lambda$. 
\end{proof}

\begin{remark}
For the Black-Scholes equation \eqref{eq:BS}, the initial condition is to be interpreted as a payoff function. Note that any mollified version of the payoff functions mentioned in Section \ref{sec:Kolmogorov} satisfies the regularity requirements of Theorem \ref{thm:approx-pinn}. Moreover, because of their compositional structure, these payoff functions and their derivatives can be approximated without the curse of dimensionality. Hence, the assumption \eqref{eq:fvarphi-acc} is satisfied as well. 
\end{remark}

Theorem \ref{thm:approx-pinn} reveals that the size of the constructed tanh neural network,  approximating the underlying solution $u$ of the linear Kolmogorov equation \eqref{eq:kolmogorov-pde}, and whose PINN residual is as small as desired \eqref{eq:pinn-int-small}, grows with increasing accuracy, but at a rate that is \emph{independent of the underlying dimension} $d$. Thus, it appears that this neural network overcomes the curse of dimensionality in this sense. 

However, Theorem \ref{thm:approx-pinn} reveals that the overall network size grows polynomially in $\rho_d$. It could be that this constant grows exponentially with dimension. Consequently, the overall network size will be subject to the curse of dimensionality. 
Given this issue, we will prove that at least for a subclass of Kolmogorov PDEs \eqref{eq:kolmogorov-pde}, $\rho_d$ only grows polynomially on $d$. This is for example the case when the coefficients $\mu$ and $\sigma$ are both constant functions. 

\begin{theorem}\label{thm:approx-heat}
Assume the setting of Theorem \ref{thm:approx-pinn} and assume that $\mu$ and $\sigma$ are both constant.
Then there exists a constant $\lambda>0$ such that for every $\varepsilon>0$ and $d\in\mathbb{N}$, there exists a tanh neural network $\Psi_{\varepsilon,d}$ with $\bigO(d^\lambda \varepsilon^{-\max\{5p+3, 2+p+\beta\}})$ neurons and weights that grow as $\bigO(d^\lambda \varepsilon^{-\max\{\zeta,8p+6\}})$ for small $\varepsilon$ and large $d$ such that
\begin{align}\label{eq:heat-int-small}
\begin{split}
    \norm{\partial_t \Psi_{\varepsilon,d}- \L{\Psi_{\varepsilon,d}}}_{L^2(D_d\times [0,T])} + 
    \norm{\Psi_{\varepsilon,d}- u_d}_{H^1(D_d\times [0,T])} + \norm{ \Psi_{\varepsilon,d}- u_d}_{L^2(\partial(D_d\times [0,T]))}\leq \varepsilon.
\end{split}\end{align}
\end{theorem}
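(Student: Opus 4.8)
The plan is to derive Theorem~\ref{thm:approx-heat} as a direct corollary of Theorem~\ref{thm:approx-pinn} by showing that, under the additional hypothesis that $\mu$ and $\sigma$ are constant, the quantity $\rho_d$ defined in \eqref{eq:rhod} grows at most polynomially (in fact, at most linearly) in $d$. Once this is established, the estimate $C(d\rho_d)^\lambda \leq C d^{2\lambda}$ (after possibly enlarging $\lambda$) converts every network-size and weight bound in Theorem~\ref{thm:approx-pinn} into one that depends only polynomially on $d$, and the error bound \eqref{eq:heat-int-small} is then literally \eqref{eq:pinn-int-small}. So the entire content is the bound on $\rho_d$.

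First I would observe that when $\mu$ and $\sigma$ are constant, the SDE \eqref{eq:Itô-diffusion-sde} is solved explicitly: $X^x_t = x + \mu t + \sigma B_t$, where $B_t$ is standard $d$-dimensional Brownian motion. Hence for $s<t$ we have the clean identity $X^x_t - X^x_s = \mu(t-s) + \sigma(B_t - B_s)$, which is notably \emph{independent of $x$}, so the maximum over $x \in D_d$ in \eqref{eq:rhod} is vacuous. Next I would estimate the $\mathcal{L}^q(P,\|\cdot\|_{\mathbb{R}^d})$-norm of this increment. By the triangle inequality in $\mathcal{L}^q$,
\begin{equation}
\norm{X^x_t-X^x_s}_{\mathcal{L}^q} \leq \norm{\mu}_{\mathbb{R}^d}\abs{t-s} + \norm{\sigma(B_t-B_s)}_{\mathcal{L}^q} \leq \norm{\mu}_{\mathbb{R}^d}\abs{t-s} + \norm{\sigma}_{2}\norm{B_t-B_s}_{\mathcal{L}^q},
\end{equation}
where $\norm{\sigma}_2$ is the spectral norm. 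The increment $B_t-B_s$ is a centered Gaussian vector in $\mathbb{R}^d$ with covariance $(t-s)I_d$, so $\norm{B_t-B_s}_{\mathbb{R}^d}^2 = (t-s)\sum_{k=1}^d g_k^2$ for i.i.d.\ standard Gaussians $g_k$, and a standard moment computation for the chi distribution gives $\E{\norm{B_t-B_s}_{\mathbb{R}^d}^q} = (t-s)^{q/2} \, \E{(\sum_k g_k^2)^{q/2}} \leq (t-s)^{q/2} C_q \, d^{q/2}$ for a constant $C_q$ depending only on $q$ (for instance by Jensen or by the explicit Gamma-function formula for chi-squared moments). Therefore $\norm{B_t-B_s}_{\mathcal{L}^q} \leq C_q^{1/q}\sqrt{d}\,\abs{t-s}^{1/2}$, and since $T$ is fixed and $p>2$ we have $\abs{t-s} \leq T^{1-1/p}\abs{t-s}^{1/p}$ and $\abs{t-s}^{1/2} \leq T^{1/2-1/p}\abs{t-s}^{1/p}$ on $[0,T]$, so dividing by $\abs{t-s}^{1/p}$ and taking the supremum yields
\begin{equation}
\rho_d \leq T^{1-\frac1p}\norm{\mu}_{\mathbb{R}^d} + T^{\frac12-\frac1p} C_q^{1/q}\norm{\sigma}_2 \sqrt{d}.
\end{equation}
Since $\norm{\mu}_{\mathbb{R}^d} = O(\sqrt{d})$ when the entries of $\mu$ are bounded (and in the constant-coefficient heat case $\mu = 0$ and $\sigma = \sqrt{\kappa}I_d$ gives $\norm{\sigma}_2 = \sqrt{\kappa}$ exactly), we conclude $\rho_d = O(\sqrt d)$, hence polynomial in $d$.

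Finally I would feed this back into Theorem~\ref{thm:approx-pinn}: the neuron count $C(d\rho_d)^\lambda\varepsilon^{-\max\{5p+3,2+p+\beta\}}$ becomes $O(d^{3\lambda/2}\varepsilon^{-\max\{5p+3,2+p+\beta\}})$ and likewise for the weights, so after relabeling the exponent of $d$ as a new $\lambda$ we obtain exactly \eqref{eq:heat-int-small} with all dimension dependence polynomial. I expect the only genuine (and still routine) obstacle to be the Gaussian moment estimate $\E{(\sum_{k=1}^d g_k^2)^{q/2}} = O(d^{q/2})$ uniformly in $d$ for fixed $q$; this is where one must be a little careful, but it follows immediately from Jensen's inequality $\E{(\frac1d\sum_k g_k^2)^{q/2}} \leq \E{\frac1d\sum_k g_k^2}^{\max\{1,q/2\}}$ when $q\geq 2$ (which holds since $q>2$), giving the bound with $C_q = 1$. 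Everything else is bookkeeping already carried out in the proof of Theorem~\ref{thm:approx-pinn}.
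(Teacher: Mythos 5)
Your proof is correct in substance and, interestingly, is more careful than the one the paper actually gives. The paper's proof simply cites Lévy's modulus of continuity (Lemma~\ref{lem:lévy}) to conclude that $\rho_d$ grows polynomially in $d$. But Lévy's modulus is an \emph{almost-sure} pathwise statement, whereas the quantity $\rho_d$ in \eqref{eq:rhod} is a supremum of $\mathcal{L}^q(P,\norm{\cdot}_{\mathbb{R}^d})$-norms of increments; an a.s.\ modulus of continuity does not by itself control $\mathcal{L}^q$-moments, and it also says nothing about how the bound scales with $d$. Your direct Gaussian moment computation is exactly what is needed: writing $X^x_t - X^x_s = \mu(t-s) + \sigma(B_t-B_s)$, noting the $x$-independence, and estimating $\norm{B_t-B_s}_{\mathcal{L}^q} = \abs{t-s}^{1/2}\, \E{\norm{g}^q}^{1/q}$ with $g\sim\mathcal{N}(0,I_d)$ is the clean and dimension-explicit route, and it yields the sharper conclusion $\rho_d = \bigO(\sqrt{d})$ rather than merely ``polynomial.'' The reduction to Theorem~\ref{thm:approx-pinn} after bounding $\rho_d$ is identical in both proofs.

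One small slip: the displayed ``Jensen'' inequality
\begin{equation}
\E{\left(\tfrac1d\textstyle\sum_k g_k^2\right)^{q/2}} \leq \left(\E{\tfrac1d\textstyle\sum_k g_k^2}\right)^{\max\{1,q/2\}}
\end{equation}
goes in the wrong direction for $q>2$, since $x\mapsto x^{q/2}$ is then convex and Jensen gives $\geq$, not $\leq$. The bound $\E{(\sum_k g_k^2)^{q/2}} = \bigO(d^{q/2})$ is nevertheless correct: either use Minkowski's inequality in $L^{q/2}$ to get
\begin{equation}
\norm{\textstyle\sum_k g_k^2}_{L^{q/2}} \leq \textstyle\sum_k \norm{g_k^2}_{L^{q/2}} = d\,\E{\abs{g_1}^q}^{2/q},
\end{equation}
or, as you also note, invoke the explicit $\chi^2_d$ moment formula $\E{(\chi^2_d)^{q/2}} = 2^{q/2}\Gamma(d/2+q/2)/\Gamma(d/2) \sim d^{q/2}$. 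With that correction your argument is complete.
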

\begin{proof}
We show that when $\mu$ and $\sigma$ are both constant functions, the constant $\rho_d$, as defined in \eqref{eq:rhod}, grows only polynomially in $d$. It is well-known that in this setting the solution process to the SDE \eqref{eq:Itô-diffusion-sde} is given by $X^x_t = x+ \mu t + \sigma B_t$, where $(B_t)_{t\in [0,T]}$ is a $d$-dimensional Brownian motion. The fact that $\rho_d$ only grows polynomially in $d$ then follows directly from the Lévy's modulus of continuity (Lemma \ref{lem:lévy}). The corollary then is a direct consequence of Theorem \ref{thm:approx-pinn}.
\end{proof}

\begin{remark}
We did not specify the boundary conditions explicitly in either Thoorem \ref{thm:approx-pinn} or Theorem \ref{thm:approx-heat}. The reason lies in the fact that Dynkin's formula (Lemma \ref{lem:dynkin}) holds with $\mathbb{R}^d$ as domain. Therefore, we implicitly use the trace of the true solution $u_d$ at the boundary of $D_d$ as the Dirichlet boundary condition. A similar approach has been used in e.g. \cite{grohs2018proof, hornung2020space}, where the Feynman-Kac formula is used to construct a neural network approximation for the solution to Kolmogorov PDEs. This assumption is quite reasonable as Black-Scholes type PDEs \eqref{eq:BS} are specified in the whole space. In practice, one needs to put in some artificial boundary conditions, for instance by truncating the domain. To this end, one can use some explicit formulas such as the Feynman-Kac or Dynkin's formula to (approximately) specify the boundary condition, see \cite{RT} for examples. Another possibility is to consider periodic boundary conditions as Dynkin's formula also holds in this case. 
\end{remark}

Thus, we have been able to answer question Q1 by showing that there exists a neural network, for which the PINN residual (generalization error) \eqref{eq:def-generalization-error} is as small as desired. In this process, we have also answered Q2 for this particular tanh neural network as the bound \eqref{eq:err1} clearly shows that the overall error (in the $L^2$-norm and even $H^1$-norm) of the tanh neural network $\Psi_{\varepsilon,d}$  
is arbitrarily small. 

Although in this particular case, an affirmative answer to question Q2 was a by-product of the proof of question Q1, it turns out that one can follow the recent paper \cite{MM1} and leverage the stability of Kolmogorov PDEs to answer question Q2 in much more generality, by showing that as long as the generalization error is the small, the overall error is proportionately small. We have the following precise statement about this fact,
\begin{theorem}\label{thm:L2-error}
Let $u$ be a (classical) solution to a linear Kolmogorov equation \eqref{eq:kolmogorov-pde} with $\mu\in C^1(D;\mathbb{R}^d)$ and $\sigma\in C^2(D;\mathbb{R}^{d\times d})$, $u_\theta$ a PINN and let the residuals be defined by \eqref{eq:res}. Then 
\begin{align}
\label{eq:reserr}
    \begin{split}
\norm{u-u_\theta}_{L^2(D\times [0,T])}^2 \leq &\: C_1 \bigg[\norm{\mathcal{R}_i[u_\theta]}_{L^2(D\times [0,T])}^2 + \norm{\mathcal{R}_t[u_\theta]}_{L^2(D)}^2 \\&+ C_2\norm{\mathcal{R}_s[u_\theta]}_{L^2(\partial D \times [0,T])} + C_3\norm{\mathcal{R}_s[u_\theta]}_{L^2(\partial D \times [0,T])}^2\bigg],  
    \end{split}
\end{align}
where $C_0 = \sum_{i,j=1}^d\norm{ \partial_{ij}(\sigma\sigma^T)_{ij}}_{L^\infty(D\times[0,T])}$, $C_1 = Te^{(C_0+\norm{\mathrm{div}\mu}_\infty+1)T}$, $C_2=  \sum_{i=1}^d\norm{(\sigma\sigma^TJ_x[u-u_\theta]^T)_i}_{L^2(\partial D\times[0,T])}$ and $C_3=\norm{\mu}_\infty+\sum_{i,j,k=1}^d\norm{\partial_i (\sigma_{ik}\sigma_{jk})}_{L^\infty(\partial D\times[0,T])}$. 
\end{theorem}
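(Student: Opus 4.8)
The plan is to derive an energy identity for the error $e := u - u_\theta$ and apply a Grönwall-type argument in time. Observe that $e$ satisfies $\partial_t e - \mathcal{L}[e] = \mathcal{R}_i[u_\theta]$ pointwise on $D \times [0,T]$ (since $\mathcal{R}_i[u] = 0$), together with $e(\cdot,0) = -\mathcal{R}_t[u_\theta]$ on $D$ and $e = -\mathcal{R}_s[u_\theta]$ on $\partial D \times [0,T]$. The natural first step is to multiply the PDE residual equation by $e$ and integrate over $D$: this gives $\frac{1}{2}\frac{d}{dt}\int_D e^2\,dx = \int_D e\,\mathcal{L}[e]\,dx + \int_D e\,\mathcal{R}_i[u_\theta]\,dx$. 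The last term is controlled by $\frac{1}{2}\int_D e^2 + \frac{1}{2}\int_D \mathcal{R}_i[u_\theta]^2$ via Young's inequality.

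The core of the argument is the treatment of $\int_D e\,\mathcal{L}[e]\,dx$. Writing $\mathcal{L}[e] = \mu \cdot \nabla e + \frac{1}{2}\sum_{i,j}(\sigma\sigma^T)_{ij}\partial_{ij}e$, I would integrate the second-order term by parts twice to move both derivatives off $e$. The first integration by parts yields $-\frac{1}{2}\sum_{i,j}\int_D \partial_i\big((\sigma\sigma^T)_{ij} e\big)\partial_j e\,dx$ plus a boundary term; since the genuine gradient-energy piece $-\frac12\sum_{i,j}\int_D (\sigma\sigma^T)_{ij}\partial_i e\,\partial_j e\,dx$ has an unfavorable sign for a Grönwall bound (and we do not want to rely on it, nor on uniform ellipticity), the trick is instead to symmetrize by writing $(\sigma\sigma^T)_{ij}\partial_i e\,\partial_j e = \frac12 \partial_j\big((\sigma\sigma^T)_{ij}\partial_i(e^2)\big) - \frac12 \partial_j\big((\sigma\sigma^T)_{ij}\big)\partial_i(e^2) $-type manipulations, ultimately transferring everything onto $e^2$. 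Concretely, one uses $2 e\,\partial_{ij}e = \partial_{ij}(e^2) - 2\partial_i e\,\partial_j e$ and $e\,\partial_i e = \frac12 \partial_i(e^2)$ to rewrite $\int_D e\,\mathcal{L}[e]$ as a sum of: (i) a term $\frac14\sum_{ij}\int_D \partial_{ij}(\sigma\sigma^T)_{ij}\, e^2\,dx$, bounded by $\frac{C_0}{4}\int_D e^2$; (ii) a term $-\frac12\int_D (\mathrm{div}\,\mu)\,e^2\,dx$, bounded by $\frac12\|\mathrm{div}\,\mu\|_\infty\int_D e^2$; and (iii) boundary integrals over $\partial D$ involving $e$, $\nabla e$, $\mu$, $\sigma\sigma^T$ and their derivatives. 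Here $e|_{\partial D} = -\mathcal{R}_s[u_\theta]$, so each boundary term is controlled by $\|\mathcal{R}_s[u_\theta]\|_{L^2(\partial D\times[0,T])}$ or its square — the linear-in-residual term carrying the factor $C_2$ (which involves $J_x[u-u_\theta]$, i.e. $\nabla e$, on the boundary, reflecting that one boundary term is $\mathcal{R}_s$ times a normal-derivative flux that we do not absorb) and the quadratic one carrying $C_3$.

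Collecting these, one obtains $\frac{d}{dt}\int_D e^2\,dx \le (C_0 + \|\mathrm{div}\,\mu\|_\infty + 1)\int_D e^2\,dx + \int_D \mathcal{R}_i[u_\theta]^2\,dx + (\text{boundary terms in } \mathcal{R}_s)$. Integrating in $t$ from $0$ to $\tau$, using $\int_D e(\cdot,0)^2 = \|\mathcal{R}_t[u_\theta]\|_{L^2(D)}^2$, and applying the integral form of Grönwall's inequality, the prefactor $e^{(C_0+\|\mathrm{div}\,\mu\|_\infty+1)T}$ appears; a final integration of the bound on $\|e(\cdot,\tau)\|_{L^2(D)}^2$ over $\tau\in[0,T]$ supplies the extra factor $T$, giving $C_1 = Te^{(C_0+\|\mathrm{div}\,\mu\|_\infty+1)T}$ and the stated inequality \eqref{eq:reserr}.

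The main obstacle is the careful bookkeeping of the boundary terms from the double integration by parts: one must be attentive to which boundary contributions can be written as $\|\mathcal{R}_s\|^2$ (hence absorbed with the benign constant $C_3$) and which are genuinely linear in $\mathcal{R}_s$ — these are the ones multiplying the conormal flux $\sigma\sigma^T J_x[e]^T$ evaluated on $\partial D$, which cannot be absorbed without a trace/gradient bound and therefore must be left explicit, producing the constant $C_2$. Everything else — the product rule identities, Young's inequality, and Grönwall — is routine. I would also note in passing that the smoothness hypotheses $\mu \in C^1$, $\sigma \in C^2$ are exactly what is needed to make $\mathrm{div}\,\mu$ and $\partial_{ij}(\sigma\sigma^T)_{ij}$ well-defined bounded functions so that $C_0$ and the other constants are finite.
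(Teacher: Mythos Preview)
Your approach is essentially the paper's: multiply the residual equation by the error, integrate over $D$, integrate the second-order and first-order terms by parts to push derivatives onto the coefficients and produce boundary terms in $\mathcal{R}_s$, control the forcing via Young's inequality, then apply Gr\"onwall and integrate in time.

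One correction, though: the gradient-energy term $-\tfrac{1}{2}\sum_{i,j}\int_D (\sigma\sigma^T)_{ij}\,\partial_i e\,\partial_j e\,dx = -\tfrac{1}{2}\int_D |\sigma^T\nabla e|^2\,dx$ has a \emph{favorable} sign (nonpositive), since $\sigma\sigma^T$ is automatically positive semidefinite---no uniform ellipticity assumption is needed. The paper simply discards this term in the upper bound. Your claim that the sign is ``unfavorable'' and your attempt to symmetrize it away are unnecessary, and the identity you wrote for $(\sigma\sigma^T)_{ij}\partial_i e\,\partial_j e$ is not correct as stated. Once you drop the gradient-energy term with the right sign, the remaining bookkeeping (the $\partial_{ij}(\sigma\sigma^T)_{ij}$ bulk term, the $\mathrm{div}\,\mu$ term, and the boundary contributions yielding $C_2$ and $C_3$) proceeds exactly as you describe.
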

\begin{proof}
Let $\hu = u_\theta-u$. Integrating $ \mathcal{R}_i[\hu](t,x)$ over $D$ and rearranging terms gives
\begin{align}\label{eq:proof-thm31-1}
    \begin{split}
        \frac{1}{2}\frac{d}{dt}\int_D \abs{\hu}^2 &= \frac{1}{2}\int_D \text{Trace}(\sigma\sigma^T H_x[\hu])\hu + \int_D \mu J_x[\hu]\hu + \int_D \mathcal{R}_i[\hu]\hu\\
    \end{split}
\end{align}
where all integrals are to be interpreted as integrals with respect to the Lebesgue measure on $D$, resp. $\partial D$.
For the first term of \eqref{eq:proof-thm31-1}, we observe that $\text{Trace}(\sigma\sigma^T H_x[\hu])=\sum_{i,j,k=1}^d \sigma_{ik}\sigma_{jk}\partial_{ij}\hu$ and also that
\begin{equation}
    \int_D \partial_i (\sigma_{ik}\sigma_{jk})\hu\partial_j\hu = \int_{\partial D} \partial_i (\sigma_{ik}\sigma_{jk})\hu^2(\hat{e}_j\cdot \hat{n}) - \int_D \partial_i (\sigma_{ik}\sigma_{jk})\hu\partial_j\hu
    -\int_D \partial_{ij} (\sigma_{ik}\sigma_{jk})\hu^2
\end{equation}
for any $1\leq i,j,k\leq d$. 
Next, we define
\begin{equation}
    c_1 = 2 \sum_{i=1}^d\norm{(\sigma\sigma^TJ_x[\hu]^T)_i}_{L^2(\partial D\times[0,T])}, \: c_2 = \sum_{i,j,k=1}^d\norm{\partial_i (\sigma_{ik}\sigma_{jk})}_{L^\infty(\partial D\times[0,T])}, \: c_3 = \sum_{i,j=1}^d\norm{ \partial_{ij}(\sigma\sigma^T)_{ij}}_{L^\infty(D\times[0,T])}.
\end{equation}
From this, using integration by parts and letting $\hat{n}$ denote the unit normal on $\partial D$, we find that
\begin{align}
    \begin{split}
        &\int_D \text{Trace}(\sigma\sigma^T H_x[\hu])\hu \\&= \sum_{i,j,k=1}^d \left[\int_{\partial D} \sigma_{ik}\sigma_{jk}\hu\partial_{j}\hu (\hat{e}_i\cdot \hat{n})  - \int_D \sigma_{ik}\sigma_{jk}\partial_i\hu\partial_j\hu - \int_D \partial_i (\sigma_{ik}\sigma_{jk})\hu\partial_j\hu\right]\\
        &= \sum_{i,j,k=1}^d \left[\int_{\partial D} \sigma_{ik}\sigma_{jk}\hu\partial_{j}\hu (\hat{e}_i\cdot \hat{n})  - \int_D \sigma_{ik}\sigma_{jk}\partial_i\hu\partial_j\hu - \frac{1}{2}\int_{\partial D} \partial_i (\sigma_{ik}\sigma_{jk})\hu^2(\hat{e}_j\cdot \hat{n}) +\frac{1}{2} \int_D \partial_{ij} (\sigma_{ik}\sigma_{jk})\hu^2\right]\\
        &\leq \sum_{i=1}^d \int_{\partial D}\abs{(\sigma\sigma^T J_x(\hu)^T)_i \hu (\hat{e}_i\cdot \hat{n})} - \underbrace{\int_D J_x[\hu] \sigma (J_x[\hu]\sigma)^T}_{\geq 0} + \frac{c_2}{2} \int_{\partial D}  \abs{\mathcal{R}_s[u_\theta]}^2 + \frac{c_3}{2}\int_D \hu^2.
    \end{split}
\end{align}
For the second term of \eqref{eq:proof-thm31-1}, we find that
\begin{align}
    \begin{split}
        \int_D \mu J_x[\hu]\hu &= \frac{1}{2}\int_D \mu J_x[\hu^2] = - \frac{1}{2}\int_D \hu^2 \text{div} \mu + \frac{1}{2}\int_{\partial D} \hu^2 \mu^T \cdot \hat{n} \\&\leq \frac{1}{2} \norm{\text{div}\mu}_\infty \int_{D}  \hu^2 + \frac{1}{2} \norm{\mu}_\infty \int_{\partial D}  \abs{\mathcal{R}_s[u_\theta]}^2
    \end{split}
\end{align}
Finally, we find for the third term of the right-hand side of \eqref{eq:proof-thm31-1} that
\begin{align}
    \int_D \mathcal{R}_i[\hu]\hu \leq \frac{1}{2}\int_D \mathcal{R}_i[\hu]^2 + \frac{1}{2}\int_D \hu^2 
\end{align}
Integrating \eqref{eq:proof-thm31-1} over the interval $[0,\tau]\subset[0,T]$, using all the previous inequalities together with Hölder's inequality, we find that
\begin{align}
    \begin{split}
\int_D \abs{\hu(x,\tau)}^2dx & \leq \int_D \abs{\mathcal{R}_t[u_\theta]}^2 + c_1\left(\int_{\partial D \times [0,T]}  \abs{\mathcal{R}_s[u_\theta]}^2\right)^{1/2} +   \int_{D\times[0,T]}  \abs{\mathcal{R}_i[\hu]}^2\\ 
&\quad + (c_2+\norm{\mu}_\infty) \int_{\partial D \times [0,T]}   \abs{\mathcal{R}_s[u_\theta]}^2    +(c_3+\norm{\text{div}\mu}_\infty+1)\int_{[0,\tau]}\int_D \abs{\hu(x,s)}^2dxds. 
    \end{split}
\end{align}
Using Grönwall's inequality and integrating over $[0,T]$ then gives
\begin{align}
    \begin{split}
\int_{D\times [0,T]} \abs{\hu}^2 &\leq Te^{(c_3+\norm{\text{div}\mu}_\infty+1)T} \biggl[\int_D \abs{\mathcal{R}_t[u_\theta]}^2 + c_1 \left(\int_{\partial D \times [0,T]}  \abs{\mathcal{R}_s[u_\theta]}^2\right)^{1/2} \\ &\quad + \int_{D\times[0,T]}  \abs{\mathcal{R}_i[\hu]}^2 + (c_2+\norm{\mu}_\infty) \int_{\partial D \times [0,T]}   \abs{\mathcal{R}_s[u_\theta]}^2\biggr]. 
    \end{split}
\end{align}
Renaming the constants yields the statement of the theorem. 
\end{proof}
Thus the bound \eqref{eq:reserr} clearly shows that controlling the generalization error \eqref{eq:def-generalization-error} suffices to control the $L^2$-error for the PINN, approximating the Kolmogorov equations \eqref{eq:kolmogorov-pde}. In particular, combining Theorem \ref{thm:L2-error} with Theorem \ref{thm:approx-pinn} then proves that it is possible to approximate solutions to linear Kolmogorov equations in $L^2$-norm at a rate that is independent of the spatial dimension $d$. 

It is easy to observe that the constants $C_{1,3} \sim {\mathcal O}(1)$ as they only depend on the coefficients of the Kolmogorov PDE. On the other hand, the constant $C_2$ depends on the PINN approximation $u_{\theta}$ and needs to be evaluated for each individual approximation. For instance, for the PINN $\Psi_{\epsilon,d}$, constructed in Theorem \ref{thm:approx-pinn}, it is straightforward to observe from the arguments presented in the proof of Theorem \ref{thm:approx-pinn} that $C_2 \sim {\mathcal O}(\epsilon)$.

\section{Generalization error of PINNs}\label{sec:gen}
Having answered the questions Q1 and Q2 on the smallness of the PINN residual (generalization error \eqref{eq:def-generalization-error}) and the total error for PINNs approximating the Kolmogorov PDEs \eqref{eq:kolmogorov-pde}, we turn our attention to question Q3 i.e., given small training error \eqref{eq:lf2} and for sufficiently many training samples $\S_{i,s,t}$, can one show that the generalization error \eqref{eq:def-generalization-error} (and consequently the total error by Theorem \ref{thm:L2-error}) is proportionately small?

To this end, we start with the observation that the PINN residual as well training error \eqref{eq:lf2} has three parts, two \emph{data terms} corresponding to the mismatches with the initial and boundary data and a \emph{residual term} that measures the amplitude of the PDE residual. Thus, we can embed these two types of terms in the following very general set-up: let $D\subset \mathbb{R}^d$ be compact and let $f:D\to\mathbb{R}$, $f_\theta:D\to\mathbb{R}$ be functions for all $\theta\in\Theta$. We can think of $f$ as the ground truth for the initial or boundary data for the PDE \eqref{eq:kolmogorov-pde} and $f_{\theta}$ be the corresponding restriction of approximating PINNs to the spatial or temporal boundaries. Similarly, we can think of $f\equiv 0$ as the PDE residual, corresponding to the exact solution of \eqref{eq:kolmogorov-pde} and $f_\theta$ is the \emph{interior} PINN residual (first term in \eqref{eq:res}), for a neural network with weights $\theta$. Let $M\in\mathbb{N}$ be the training set size and let $\S =\{z_1, \ldots, z_M\}\subset D^M$ be the training set, where each $z_i$ is independently drawn according to some probability measure $\mu$ on $D$. We define the (squared) training error, generalization error and empirical risk minimizer as
\begin{equation}\label{eqn:training-generalization-minimizer}
    \Et(\theta,\S)^2 = \frac{1}{M}\sum_{i=1}^M\abs{f_\theta(z_i)-f(z_i)}^2, \quad \Eg(\theta)^2 = \int_D \abs{f_\theta(z)-f(z)}^2 d\mu(z), \quad  \theta^*(\S) \in \arg \min_{\theta\in\Theta} \Et(\theta,\S)^2, 
\end{equation}
where we restrict ourselves to the (squared) $L^2$-norm only for definiteness, while claiming that all the subsequent results readily extend to general $L^p$-norms for $1 \leq p < \infty$. It is easy to see that the above set-up encompasses all the terms in the definitions of the generalization error \eqref{eq:def-generalization-error} and training error \eqref{eq:lf2} for PINNs. 

Our first aim is to decompose this very general form of generalization error in \eqref{eqn:training-generalization-minimizer} as, 
\begin{lemma}\label{lem:error-decomposition}
Let $k\in\mathbb{N}$ and $\Theta \subset \mathbb{R}^k$ compact. Then it holds that
\begin{align}\label{eq:error-decomp-2}
    \begin{split}
        \Eg(\theta^*(\S))^2 \leq&\: \sup\limits_{\substack{\theta,\vartheta\in\Theta:\\\norm{\theta-\vartheta}\leq \delta }}\abs{\Eg(\vartheta)^2-\Eg(\theta)^2} + \sup_{\theta\in\Theta}\abs{\Eg(\theta)^2-\Et(\theta)^2} \\&\:+ \sup\limits_{\substack{\theta,\vartheta\in\Theta:\\\norm{\theta-\vartheta}\leq \delta }}\abs{\Et(\theta,\S)^2-\Et(\vartheta,\S)^2}+\Et(\theta^*(\S),\S)^2. 
    \end{split}
\end{align}
\end{lemma}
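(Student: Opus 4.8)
The plan is to prove \eqref{eq:error-decomp-2} by a direct add-and-subtract argument followed by the triangle inequality; no properties of $\Eg$, $\Et$, the networks, or the underlying PDE are needed beyond the definitions in \eqref{eqn:training-generalization-minimizer}. Fix $\delta>0$. Since $\Theta\subset\mathbb{R}^k$ is compact, it admits a finite covering by balls of radius $\delta$, so there is a point $\vartheta\in\Theta$ (a centre of one of these balls) with $\norm{\theta^*(\S)-\vartheta}\leq\delta$. (For the inequality in isolation one could even take $\vartheta=\theta^*(\S)$ and make the first and third terms vanish, but keeping $\vartheta$ in a fixed finite $\delta$-net is what makes the bound useful downstream, where the middle term is estimated by concentration over that net.)

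Next I would write the telescoping identity, valid for this $\vartheta$,
\begin{equation*}
\Eg(\theta^*(\S))^2 = \big(\Eg(\theta^*(\S))^2-\Eg(\vartheta)^2\big) + \big(\Eg(\vartheta)^2-\Et(\vartheta,\S)^2\big) + \big(\Et(\vartheta,\S)^2-\Et(\theta^*(\S),\S)^2\big) + \Et(\theta^*(\S),\S)^2,
\end{equation*}
and bound the four summands in turn. The first difference is at most $\sup_{\norm{\theta-\vartheta}\leq\delta}\abs{\Eg(\vartheta)^2-\Eg(\theta)^2}$ because $\norm{\theta^*(\S)-\vartheta}\leq\delta$; the second is at most $\sup_{\theta\in\Theta}\abs{\Eg(\theta)^2-\Et(\theta)^2}$ trivially; the third is at most $\sup_{\norm{\theta-\vartheta}\leq\delta}\abs{\Et(\theta,\S)^2-\Et(\vartheta,\S)^2}$, again using $\norm{\theta^*(\S)-\vartheta}\leq\delta$ (note the minimality of $\theta^*(\S)$ is not even needed); and the fourth is exactly the last term of \eqref{eq:error-decomp-2}. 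Passing to absolute values on the first three differences before applying these bounds (legitimate since all three supremum terms are already over symmetrized quantities) and summing yields \eqref{eq:error-decomp-2}.

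There is no genuine obstacle in proving the lemma itself — it is an elementary decomposition. The substantive point is the \emph{design} of the decomposition: it is engineered so that each of the three supremum terms on the right of \eqref{eq:error-decomp-2} is separately controllable in the sequel. The two modulus-of-continuity terms, $\sup_{\norm{\theta-\vartheta}\leq\delta}\abs{\Eg(\vartheta)^2-\Eg(\theta)^2}$ and its training-error analogue, will be made small by exploiting Lipschitz bounds on $\theta\mapsto\Eg(\theta)^2$ and $\theta\mapsto\Et(\theta,\S)^2$ (uniform in $\S$) over the compact set $\Theta$, i.e.\ by choosing $\delta$ sufficiently small; the middle term $\sup_{\theta\in\Theta}\abs{\Eg(\theta)^2-\Et(\theta)^2}$ will be handled by restricting to a finite $\delta$-net of $\Theta$, applying a Hoeffding-type concentration inequality for the i.i.d.\ sample $\S$ at each net point, and taking a union bound over the net — this is where compactness of $\Theta$ is genuinely used and where the dependence of the required sample size on the covering number enters. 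Thus the only care needed here is to ensure the four terms appear in precisely the form these later estimates can absorb.
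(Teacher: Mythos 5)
Your proof is correct and matches the paper's argument essentially verbatim: both pick a finite $\delta$-net of the compact set $\Theta$, choose the net point nearest to $\theta^*(\S)$ (your $\vartheta$, the paper's $\theta_{i^*}$), telescope by adding and subtracting $\Eg(\vartheta)^2$ and $\Et(\vartheta,\S)^2$, and majorize each difference by the corresponding supremum. Your side remarks — that the lemma in isolation holds trivially with $\vartheta=\theta^*(\S)$, that minimality of $\theta^*(\S)$ is not used here, and that the decomposition is designed so each term is controllable downstream — are all accurate and reflect the role the lemma plays in Theorem~\ref{thm:bound-generalization}.
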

\begin{proof}
Since $\Theta$ is compact, there exist for every $\delta >0$ a natural number $N=N(\delta)\in\mathbb{N}$ and parameters $\theta_1, \ldots \theta_N\in\Theta$ such that for all $\theta\in\Theta$ there exists $1\leq i\leq N$ such that $\norm{\theta-\theta_i}_\infty\leq \delta$. For every $1\leq i \leq N$ it holds that
\begin{align}
    \begin{split}
        \Eg(\theta^*(\S))^2 &\leq \abs{\Eg(\theta^*(\S))^2-\Eg(\theta_{i})^2} + \abs{\Eg(\theta_i)^2-\Et(\theta_i)^2} + \abs{\Et(\theta_i)^2-\Et(\theta^*)^2} + \Et(\theta^*(\S))^2. 
    \end{split}
\end{align}
This error decomposition holds in particular for $i^*=i^*(\theta^*) \in \arg\min_i \norm{\theta^*-\theta_i}_\infty$. Using that $\norm{\theta^*-\theta_{i^*}}_\infty\leq \delta$ and then majorizing gives the bound from the statement. 
\end{proof}
Note that we have leveraged the compactness of the parameter space $\Theta$ in \eqref{eq:error-decomp-2} to decompose the generalization error in terms of the training error $\Et(\theta^*(\S),\S)$, the so-called \emph{generalization gap} i.e., $\sup_{\theta\in\Theta}\abs{\Eg(\theta)^2-\Et(\theta)^2} $ and error terms that measure the modulus of continuity of the generalization and training errors. From this decomposition, we can intuitively see that these error terms can be made suitably small by requiring that the generalization and training errors are, for instance, Lipschitz continuous. Then, we can use standard concentration inequalities to obtain the following \emph{very general} bound on the generalization error in terms of the training error, 

\begin{theorem}\label{thm:bound-generalization}
Let $a,c,\mathfrak{L}>0$, $k,d,M\in\mathbb{N}$, $D\subset \mathbb{R}^d$ compact, $(\Omega, \mathcal{A}, \mathbb{P})$ a probability space, $\Theta = [-a,a]^k$ and let $f:D\to \mathbb{R}$ and $f_\theta: D\to \mathbb{R}$ be functions for all $\theta\in\Theta$. Let $X_i:\Omega\to D$, $1\leq i \leq M$ be iid random variables, $\S=\{X_1, \ldots X_M\}$ and let $\theta^*(\S)$ be a minimizer of $\theta\mapsto\Et(\theta,\S)^2$. Let $\Et(\theta)^2,\Eg(\theta,\S)^2 \in [0,c]$ for all $\theta\in\Theta$ and $\S\subset D^M$ and let $\theta\mapsto\Eg(\theta)^2$ and $\theta\mapsto\Et(\theta,\S)^2$ be Lipschitz continuous with Lipschitz constant $\mathfrak{L}$. For every $\epsilon,\eta>0$, it holds that
\begin{equation}
\label{eq:gerrp}
       \Prob{\Eg(\theta^*(\S)) \leq \epsilon + \Et(\theta^*(\S),\S)} \geq 1 - \eta \quad \text{if} \quad M \geq \frac{c^2}{2\epsilon^4}\left(k\ln(\frac{2a\mathfrak{L}}{\epsilon^2})+\ln(\frac{1}{\eta})\right).
\end{equation}
\end{theorem}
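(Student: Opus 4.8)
The plan is to combine the error decomposition of Lemma~\ref{lem:error-decomposition} with a covering-number argument for the parameter space $\Theta$ and a Hoeffding-type concentration bound applied at each point of the cover. First I would fix $\delta>0$ (to be chosen) and invoke Lemma~\ref{lem:error-decomposition}, which bounds $\Eg(\theta^*(\S))^2$ by the sum of four terms: two continuity terms controlled by the Lipschitz hypothesis, the generalization gap $\sup_{\theta\in\Theta}\abs{\Eg(\theta)^2-\Et(\theta,\S)^2}$, and the training error $\Et(\theta^*(\S),\S)^2$. Since $\theta\mapsto\Eg(\theta)^2$ and $\theta\mapsto\Et(\theta,\S)^2$ are $\mathfrak{L}$-Lipschitz, each of the two continuity terms is at most $\mathfrak{L}\delta$, so after this step it remains to control the generalization gap with high probability.

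Next I would handle the generalization gap via a union bound over a finite $\delta$-net $\{\theta_1,\dots,\theta_N\}$ of $\Theta=[-a,a]^k$. A standard volumetric estimate gives a net of size $N\leq (2a/\delta)^k$ in the $\infty$-norm (or $(\lceil a/\delta\rceil+1)^k$; the precise form does not matter up to constants). For a fixed $\theta_j$, the quantity $\Et(\theta_j,\S)^2=\frac1M\sum_i\abs{f_{\theta_j}(X_i)-f(X_i)}^2$ is an average of $M$ i.i.d.\ bounded random variables with mean $\Eg(\theta_j)^2$ and range contained in $[0,c]$, so Hoeffding's inequality yields $\Prob{\abs{\Eg(\theta_j)^2-\Et(\theta_j,\S)^2}>t}\leq 2\exp(-2Mt^2/c^2)$. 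Taking a union bound over $j=1,\dots,N$ and then upgrading from net points to all of $\Theta$ using the Lipschitz property once more (adding another $2\mathfrak{L}\delta$), I get
\begin{equation}
\Prob{\sup_{\theta\in\Theta}\abs{\Eg(\theta)^2-\Et(\theta,\S)^2} > 2\mathfrak{L}\delta + t} \leq 2N\exp(-2Mt^2/c^2).
\end{equation}

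Then I would bookkeep the constants. Combining everything, $\Eg(\theta^*(\S))^2 \leq \Et(\theta^*(\S),\S)^2 + C\mathfrak{L}\delta + t$ on an event of probability at least $1-2N\exp(-2Mt^2/c^2)$, for a small absolute constant $C$. To reach the form $\Eg(\theta^*(\S))\leq\epsilon+\Et(\theta^*(\S),\S)$, I use $\sqrt{a+b}\leq\sqrt{a}+\sqrt{b}$, so it suffices to make $C\mathfrak{L}\delta+t\leq\epsilon^2$; choosing, say, $\delta\sim\epsilon^2/\mathfrak{L}$ and $t\sim\epsilon^2$ does this, which turns $N\leq(2a\mathfrak{L}/\epsilon^2)^k$ (up to constants). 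Finally I set the failure probability $2N\exp(-2Mt^2/c^2)\leq\eta$ and solve for $M$: taking logarithms gives $M\geq \frac{c^2}{2t^2}(\ln N + \ln(2/\eta)) \sim \frac{c^2}{2\epsilon^4}\big(k\ln(2a\mathfrak{L}/\epsilon^2)+\ln(1/\eta)\big)$, which is exactly the claimed threshold (absorbing the harmless constant factors into the choices of $\delta,t$).

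The main obstacle is purely a matter of careful constant-tracking rather than any conceptual difficulty: one must choose $\delta$ and $t$ so that the three non-training-error contributions (the two Lipschitz-continuity terms from the decomposition, plus the net-to-$\Theta$ slack in the generalization gap, plus the concentration slack $t$) together sum to at most $\epsilon^2$, while simultaneously keeping $\ln N = k\ln(2a/\delta)$ small enough that the resulting bound on $M$ matches the stated $k\ln(2a\mathfrak{L}/\epsilon^2)$ term. A secondary point to verify is that Hoeffding's inequality applies, i.e.\ that the per-sample summands $\abs{f_\theta(X_i)-f(X_i)}^2$ genuinely lie in an interval of length at most $c$ (this follows from the hypothesis $\Et(\theta,\S)^2,\Eg(\theta)^2\in[0,c]$ applied pointwise, or more directly from boundedness of $f_\theta-f$), and that $\Et(\theta,\S)^2$ is indeed an unbiased estimator of $\Eg(\theta)^2$, which holds because the $X_i$ are i.i.d.\ with law $\mu$.
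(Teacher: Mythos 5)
Your proposal is correct and follows the paper's strategy closely: a $\delta$-cover of $\Theta = [-a,a]^k$, Hoeffding's inequality with threshold $t=\epsilon^2$ at the net points, a union bound over the cover, and the Lipschitz hypothesis with $\delta \sim \epsilon^2/\mathfrak{L}$ to pass between $\theta^*(\S)$ and its nearest net point. The one refinement in the paper's proof is that, via the partition events $\mathcal{C}_i = \{\mathcal{P}(\theta^*(\S)) = \theta_i\}$, one-sided Hoeffding is invoked only at the single net point closest to $\theta^*(\S)$ rather than by first bounding $\sup_{\theta\in\Theta}\abs{\Eg(\theta)^2 - \Et(\theta,\S)^2}$ uniformly; this spares the second round of Lipschitz slack (the ``net-to-$\Theta$ upgrade'') you incur, which is why the paper can land on the stated threshold for $M$ without the extra constant factors you flagged as needing to be absorbed — a constant-tracking tightening, not a conceptual difference.
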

\begin{proof}
For arbitrary $\epsilon>0$, set $\delta = \frac{\epsilon^2}{2\mathfrak{L}}$ and let $\{\theta_i\}_{i=1}^N$ be a $\delta$-covering of $\Theta$ with respect to the supremum norm. Then it holds that $N$ can be bounded by $(2a\mathfrak{L}/\epsilon^2)^k$ and moreover
\begin{equation}\label{eq:proof-gen-1}
    \sup_{\theta,\vartheta\in\Theta:\norm{\theta-\vartheta}\leq \delta }\abs{\Eg(\vartheta)^2-\Eg(\theta)^2} + \sup_{\theta,\vartheta\in\Theta:\norm{\theta-\vartheta}\leq \delta }\abs{\Et(\theta,\S)^2-\Et(\vartheta,\S)^2} \leq \epsilon. 
\end{equation}
Then it holds for every $1\leq i \leq N$ that 
\begin{align}\label{eq:proof-gen-2}
    \begin{split}
         \Eg(\theta^*(\S))^2 &\leq \abs{\Eg(\theta^*(\S))^2-\Eg(\theta_{i})^2} + \abs{\Eg(\theta_i)^2-\Et(\theta_i,\S)^2} + \abs{\Et(\theta_i,\S)^2-\Et(\theta^*(\S),\S)^2} + \Et(\theta^*(\S),\S)^2. 
    \end{split}
\end{align}
Next, we define a projection $\mathcal{P}:\Theta\to \Theta$ that maps $\theta$ to a unique $\theta_{i^*}$ with $i^*\in\arg\min_i \norm{\theta-\theta_i}_\infty$ and we define the following events for $1\leq i \leq N$, 
\begin{align}
    \begin{split}
        \mathcal{A} &= \left\{ \Eg(\theta^*(\S))^2 \leq \epsilon^2 + \Et(\theta^*(\S),\S)^2 \right\}, \quad
        \mathcal{B}_i = \left\{ \Eg(\theta_i)^2 \leq \epsilon^2 + \Et(\theta_i,\S)^2 \right\}, \quad
        \mathcal{C}_i = \left\{\mathcal{P}(\theta^*(\S)) = \theta_i \right\}, \\
        \mathcal{D} &= \left\{ \exists i \in\{1,\ldots, N\}: \left(\Eg(\theta_i)^2 \leq \epsilon^2 + \Et(\theta_i,\S)^2\right)\text{ and } (\mathcal{P}(\theta^*(\S)) = \theta_i)\right\}. \\
    \end{split}
\end{align}
Note that \eqref{eq:proof-gen-1} and \eqref{eq:proof-gen-2} imply that $\mathcal{D} \subseteq \mathcal{A}$ and thus $\mathbb{P}(\mathcal{D}) \leq \mathbb{P}(\mathcal{A})$. Next, by the definition of $\mathcal{P}$ it holds that $\mathcal{P}$ induces a partition on $\Theta$ and thus $\sum_i \mathcal{P}(\mathcal{C}_i) = 1$. As $\Et(\theta, \{X_i\})^2:\Omega\to [0,c]$ and $\E{\Et(\theta, \{X_i\})^2} = \Eg(\theta)^2$ for all $i$, Hoeffding's inequality (Lemma \ref{lem:hoeffding}) proves that $\Prob{\mathcal{B}_i} \geq 1- \exp(-2\epsilon^4 M/c^2)$. 
Combining this with the observation that $\mathcal{D} = \bigsqcup_{i=1}^N (\mathcal{B}_i \cap \mathcal{C}_i)$ then proves that
\begin{align}
    \begin{split}
        \mathbb{P}(\mathcal{A}) &\geq \mathbb{P}(\mathcal{D})
        = \sum_{i=1}^N \mathbb{P}(\mathcal{B}_i \cap \mathcal{C}_i)
        \geq \sum_{i=1}^N (\mathbb{P}(\mathcal{B}_i)+\mathbb{P}(\mathcal{C}_i)-\mathbb{P}(\mathcal{B}_i \cup \mathcal{C}_i))\\
        & \geq  1 + \sum_{i=1}^N (\mathbb{P}(\mathcal{B}_i)-1)
        \geq 1 - N \exp\left(\frac{-2\epsilon^4M}{c^2}\right)
        \geq 1 - \left(\frac{2a\mathfrak{L}}{\epsilon^2}\right)^k \exp\left(\frac{-2\epsilon^4M}{c^2}\right).
    \end{split}
\end{align}
As a consequence, it holds that
\begin{align}
\begin{split}
  M \geq \frac{c^2}{2\epsilon^4}\left(k\ln(\frac{2a\mathfrak{L}}{\epsilon^2})+\ln(\frac{1}{\eta})\right) \quad &\implies \quad \Prob{\Eg(\theta^*(\S))^2 \leq \epsilon^2 + \Et(\theta^*(\S),\S)^2} \geq 1 - \eta\\ 
  &\implies \quad \Prob{\Eg(\theta^*(\S)) \leq \epsilon + \Et(\theta^*(\S),\S)} \geq 1 - \eta.  
\end{split}
\end{align}

\end{proof}
The bound on the generalization error in terms of the training error \eqref{eq:gerrp} is a probabilistic statement. It can readily be recast in terms of \emph{averages} by defining the so-called \emph{cumulative} generalization and training errors of the form,
\begin{equation}\label{eqn:acc-training-error}
    \Aeg^2 = \int_{D^M}\Eg(\theta^*(\S))^2 d\mu^M(\S), \quad \Aet^2 = \int_{D^M}\Et(\theta^*(\S),\S)^2 d\mu^M(\S). 
\end{equation}
Here $\mu^M = \mu \otimes \mu \ldots \otimes \mu$ is the induced product measure on the training set $\S$. We have the following \emph{ensemble} version of Theorem \ref{thm:bound-generalization};
\begin{corollary}\label{cor:bound-generalization}
Assume the setting of Theorem \ref{thm:bound-generalization}. It holds that
\begin{equation}
    \Aeg \leq \epsilon + \Aet  \quad \text{if} \quad M \geq \frac{2c^2}{\epsilon^4}\left(k\ln(\frac{4a\mathfrak{L}}{\epsilon^2})+\ln(\frac{2c}{\epsilon^2})\right).
\end{equation}
\end{corollary}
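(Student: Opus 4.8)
The plan is to integrate the high–probability estimate of Theorem~\ref{thm:bound-generalization} against the product measure $\mu^M$, splitting the integration domain into the event on which that estimate holds and its (small-probability) complement, and using the uniform bound $\Eg(\theta)^2\le c$ to control the contribution of the complement. I would use the squared form of the conclusion: the proof of Theorem~\ref{thm:bound-generalization} in fact establishes, under the very same hypothesis on $M$, the intermediate statement $\Prob{\Eg(\theta^*(\S))^2\le \epsilon^2+\Et(\theta^*(\S),\S)^2}\ge 1-\eta$, and it is this quadratic version that is convenient here since $\Aeg^2$ and $\Aet^2$ are defined via squares.

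First I would dispose of the trivial case. Since $\Eg(\theta)^2\in[0,c]$ for every $\theta$, we have $\Aeg^2=\int_{D^M}\Eg(\theta^*(\S))^2\,d\mu^M(\S)\le c$; hence if $\epsilon^2\ge c$ the claim $\Aeg\le\epsilon\le\epsilon+\Aet$ is immediate, and we may assume $\epsilon^2<c$, which makes $\eta:=\epsilon^2/(2c)<1/2$ an admissible failure probability. Next I would apply Theorem~\ref{thm:bound-generalization} with the parameter $\epsilon$ replaced by $\epsilon/\sqrt2$ and the parameter $\eta$ replaced by $\epsilon^2/(2c)$. A direct substitution into the sample-size condition of that theorem, $M\ge \frac{c^2}{2(\epsilon/\sqrt2)^4}\bigl(k\ln(\tfrac{2a\mathfrak L}{(\epsilon/\sqrt2)^2})+\ln(\tfrac{2c}{\epsilon^2})\bigr)$, reproduces exactly the hypothesis $M\ge\frac{2c^2}{\epsilon^4}\bigl(k\ln(\tfrac{4a\mathfrak L}{\epsilon^2})+\ln(\tfrac{2c}{\epsilon^2})\bigr)$ of the corollary. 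Thus, setting $\mathcal A=\{\S\in D^M:\Eg(\theta^*(\S))^2\le\tfrac{\epsilon^2}{2}+\Et(\theta^*(\S),\S)^2\}$, we obtain $\mu^M(\mathcal A)\ge 1-\tfrac{\epsilon^2}{2c}$.

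Finally I would split $\Aeg^2=\int_{\mathcal A}\Eg(\theta^*(\S))^2\,d\mu^M(\S)+\int_{D^M\setminus\mathcal A}\Eg(\theta^*(\S))^2\,d\mu^M(\S)$. On $\mathcal A$ the integrand is at most $\tfrac{\epsilon^2}{2}+\Et(\theta^*(\S),\S)^2$, and since $\Et(\theta^*(\S),\S)^2\ge0$ its integral over $\mathcal A$ is bounded by $\int_{D^M}\Et(\theta^*(\S),\S)^2\,d\mu^M(\S)=\Aet^2$; on the complement the integrand is at most $c$ while $\mu^M(D^M\setminus\mathcal A)\le\tfrac{\epsilon^2}{2c}$. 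Adding the two contributions gives $\Aeg^2\le\tfrac{\epsilon^2}{2}+\Aet^2+\tfrac{\epsilon^2}{2}=\epsilon^2+\Aet^2$, and subadditivity of the square root yields $\Aeg\le\sqrt{\epsilon^2+\Aet^2}\le\epsilon+\Aet$. There is no real obstacle here; the only point that needs a little care is the allocation of the error budget — splitting $\epsilon^2$ into the ``approximation'' part $\epsilon^2/2$ supplied by Theorem~\ref{thm:bound-generalization} and the ``tail'' part $c\eta=\epsilon^2/2$ coming from the failure event — and checking that precisely this choice of $\epsilon/\sqrt2$ and $\eta=\epsilon^2/(2c)$ reproduces the stated constant in the lower bound on $M$.
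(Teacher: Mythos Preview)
Your proof is correct and follows essentially the same approach as the paper: both apply the squared version of Theorem~\ref{thm:bound-generalization} with $\epsilon$ replaced by $\epsilon/\sqrt{2}$ and $\eta=\epsilon^2/(2c)$, then split according to whether the high-probability event holds, using the uniform bound $c$ on the bad event. The paper phrases the splitting in terms of the random variable $X=\Eg(\theta^*(\S))^2-\Et(\theta^*(\S),\S)^2$ and bounds $\mathbb{E}[X]\le\epsilon^2$ directly, whereas you split the integral of $\Eg(\theta^*(\S))^2$ over $\mathcal A$ and its complement; these are the same computation, and your additional handling of the trivial case $\epsilon^2\ge c$ is a minor refinement.
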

\begin{proof}
Let $X = \Eg(\theta^*(\S))^2-\Et(\theta^*(\S),\S)^2$. Using (the last step of the proof of) Theorem \ref{thm:bound-generalization} with $\eta = \frac{\epsilon^2}{2c}$ then gives that
\begin{equation}
    \mathbb{E}[X] = \mathbb{E}[X \mathbbm{1}_{X\leq \frac{\epsilon^2}{2}}] + \mathbb{E}[X \mathbbm{1}_{X> \frac{\epsilon^2}{2}}] \leq \frac{\epsilon^2}{2} + c \Prob{X> \frac{\epsilon^2}{2}} \leq \epsilon^2, 
\end{equation}
provided that $M \geq \frac{2c^2}{\epsilon^4}\left(k\ln(\frac{4a\mathfrak{L}}{\epsilon^2})+\ln(\frac{2c}{\epsilon^2})\right)$.
\end{proof}

As a first example for illustrating the bounds of Theorem \ref{thm:bound-generalization} (and Corollary \ref{cor:bound-generalization}), we apply it to the estimation of the generalization errors, corresponding to the spatial and temporal boundaries, in terms of the corresponding training errors \eqref{eq:lf2}. These bounds readily follow from the following general bound.

\begin{corollary}\label{cor:gen-supervised}
Let $L, W\in\mathbb{N}$, $R\geq 1$, $L\geq 2$ and let $f_\theta:D\to\mathbb{R}$, $\theta\in \Theta,$ be tanh neural networks with at most $L-1$ hidden layers, width at most $W$ and weights and biases bounded by $R$. For every $0<\epsilon<1$, it holds that for the generalization and training error \eqref{eqn:training-generalization-minimizer} that,
\begin{equation}
       \Prob{\Eg(\theta^*(\S)) \leq \epsilon + \Et(\theta^*(\S),\S)} \geq 1 - \eta \quad \text{if} \quad M \geq \frac{16d(L+3)^2W^6R^4}{\epsilon^4}\ln(\frac{4\sqrt[5]{d+4}RW}{{\epsilon}}).\end{equation}
\end{corollary}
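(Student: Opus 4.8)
The plan is to obtain Corollary~\ref{cor:gen-supervised} as a direct instance of Theorem~\ref{thm:bound-generalization}: the whole task reduces to exhibiting admissible choices of the four abstract quantities appearing there --- the dimension $k$ of the parameter cube, the weight bound $a$, a uniform bound $c$ on $\Eg(\theta)^2$ and $\Et(\theta,\S)^2$, and a common Lipschitz constant $\mathfrak{L}$ of $\theta\mapsto\Eg(\theta)^2$ and $\theta\mapsto\Et(\theta,\S)^2$ --- and then to simplify the resulting lower bound on $M$ into the displayed form. First I would fix the tanh architecture of maximal size under consideration ($L-1$ hidden layers of width $W$, input dimension $d$, scalar output), take $\Theta=[-R,R]^k$ with $k$ the number of its weights and biases, and note that a layerwise count gives $k=\bigO\big(W(d+W)L\big)$. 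Trivially $a=R$.

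Next I would bound $c$. Since $\tanh$ takes values in $[-1,1]$, every hidden-layer activation of $f_\theta$ is bounded by $1$ componentwise, so the final affine map gives $\norm{f_\theta}_{L^\infty(D)}\le R(W+1)\le 2RW$ (using $R\ge1$); together with the boundedness of the datum $f$ this yields $\Eg(\theta)^2,\Et(\theta,\S)^2\le c$, uniformly in $\theta\in\Theta$ and $\S$, with $c=\bigO(R^2W^2)$ (the implicit constant absorbing $\norm{f}_{L^\infty}$). The Lipschitz constant $\mathfrak{L}$ is the one place a genuine estimate is needed: a standard induction over the layers, using $\norm{\tanh}_{L^\infty},\norm{\tanh'}_{L^\infty}\le1$ and the weight bound $R$, shows that $\theta\mapsto f_\theta(x)$ is Lipschitz on $[-R,R]^k$, uniformly over $x\in D$, with a constant that is polynomial in $W$ and $d$ and grows at most exponentially in the depth $L$; composing with $t\mapsto(t-f(x))^2$ on the bounded range of $f_\theta-f$ multiplies this by a further $\bigO(RW)$ factor, and since an empirical average over $\S$ and the integral against $\mu$ both preserve uniform Lipschitz continuity, one obtains an explicit common bound $\mathfrak{L}$. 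Crucially, $\mathfrak{L}$ enters Theorem~\ref{thm:bound-generalization} only through $\ln(2a\mathfrak{L}/\epsilon^2)$, so only $\ln\mathfrak{L}=\bigO\big(L\ln(dRW)\big)$ matters.

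It then remains to substitute into the condition $M\ge\frac{c^2}{2\epsilon^4}\big(k\ln(\tfrac{2a\mathfrak{L}}{\epsilon^2})+\ln(\tfrac1\eta)\big)$ of Theorem~\ref{thm:bound-generalization} and to simplify, using $0<\epsilon<1$, $R,W\ge1$, and (as is tacit in the stated bound) that $\eta$ is not too small, so that $\ln(1/\eta)$ is of lower order. One splits the logarithm $\ln(2a\mathfrak{L}/\epsilon^2)$: the part whose growth is too fast to stay under the displayed $\ln(4\sqrt[5]{d+4}\,RW/\epsilon)$ --- essentially the depth factor and most of the $d$-dependence coming from $\ln\mathfrak{L}$ --- is moved into the polynomial prefactor, while the remainder is kept inside the logarithm. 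Combining the polynomial factors $\tfrac{c^2}{2}=\bigO(R^4W^4)$, $k=\bigO\big(W(d+W)L\big)$, and the extra depth and dimension factors extracted from $\ln\mathfrak{L}$, and organizing everything into the shape $d(L+3)^2W^6R^4$, then produces a lower bound of the claimed form $M\ge\frac{16\,d(L+3)^2W^6R^4}{\epsilon^4}\ln(\tfrac{4\sqrt[5]{d+4}\,RW}{\epsilon})$.

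The step I expect to be the real obstacle is precisely this concluding bookkeeping. One has to track the constants in the bounds on $k$ and on $\mathfrak{L}$ --- and in particular to decide exactly how the depth- and dimension-dependence of $\ln\mathfrak{L}$ is partitioned between the logarithm and the polynomial prefactor --- carefully enough to land on the explicit constant $16$ and the slightly unusual logarithmic argument $4\sqrt[5]{d+4}\,RW/\epsilon$, rather than merely on an estimate of the same order. There is no new idea involved here: the reduction to Theorem~\ref{thm:bound-generalization} and the Lipschitz-in-parameters estimate for tanh networks are both entirely standard, and the only care required is in the arithmetic of the final substitution.
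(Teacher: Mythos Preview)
Your proposal is correct and follows essentially the same route as the paper: reduce to Theorem~\ref{thm:bound-generalization} with $a=R$, $c=\bigO(R^2W^2)$, $k=\bigO(dLW^2)$, and $\mathfrak{L}$ obtained from the layerwise Lipschitz-in-parameters estimate for tanh networks (the paper quotes Lemma~\ref{lem:NN-lipschitz} to get $\mathfrak{L}=4(d+4)R^LW^{L-1}$), then extract the depth factor from $\ln\mathfrak{L}$ into the prefactor exactly as you describe. Your identification of the final constant-tracking as the only nontrivial step, and of the tacit absorption of $\ln(1/\eta)$, is accurate.
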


\begin{proof}
Using the inverse triangle inequality and the fact that $a^2-b^2=(a+b)(a-b)$ for $a,b\in\mathbb{R}$, we find for $\theta, \vartheta\in\Theta$ that 
\begin{align}
\begin{split}
    \abs{\int_D \abs{f_\theta(x)-f(x)}^2 - \abs{f_\vartheta(x)-f(x)}^2 d\mu(x)} &\leq 4R \int_D \abs{\abs{f_\theta(x)-f(x)} - \abs{f_\vartheta(x)-f(x)}}d\mu(x)\\
    & \leq 4R\int_D \abs{f_\theta(x)-f_\vartheta(x)}d\mu(x).
\end{split}
\end{align}
Combining this with Lemma \ref{lem:NN-lipschitz} and Lemma \ref{lem:bounds-tanh} proves that the Lipschitz constant of the map $\theta \mapsto f_\theta$ is at most $4(d+4)R^LW^{L-1}$. We can then use Corollary \ref{cor:bound-generalization} 
with $a\leftarrow R$, $\mathfrak{L}\leftarrow 4(d+4)R^{L}W^{L-1}$ and $c\leftarrow 4W^2R^2$ (from \eqref{eqn:training-generalization-minimizer}). Moreover, one can calculate that every $f_\theta$ has at most $(d+(L-2)W+1)W$ weights and $(L-1)W+1$ biases, such that $k\leftarrow 2dLW^2$. Next, we make the estimate
\begin{equation}
    \frac{c^2}{2\epsilon^4}\left(k\ln(\frac{2a\mathfrak{L}}{\epsilon^2})+\ln(\frac{2c}{\epsilon^2})\right) \leq \frac{8W^4R^4}{\epsilon^4} \cdot 2dLW^2 \ln(\frac{2^6(d+4)R^{L+3}W^{L-1}}{\epsilon^4}) \leq \frac{16d(L+3)^2W^6R^4}{\epsilon^4}\ln(\frac{4\sqrt[5]{d+4}RW}{{\epsilon}}).
\end{equation}
\end{proof}

Next, we will apply the above general results to PINNs for the Kolmogorov equation \eqref{eq:kolmogorov-pde}. 
The following corollary provides an estimate on the (cumulative) PINN generalization error and can be seen as the counterpart of Corollary \ref{cor:gen-supervised}. It is based on the fact that neural networks and their derivatives are Lipschitz continuous in the parameter vector, the proof of which can be found in Appendix \ref{app:lipschitz}. Consequently, the PINN generalization error is Lipschitz as well (cf. Lemma \ref{lem:lqq-calculation}). 

\begin{corollary}\label{cor:gen-pinn}
Let $L, W\in\mathbb{N}$, $R\geq 1$, $a,b\in\mathbb{R}$ with $a<b$ and let $u_\theta:[a,b]^d\to \mathbb{R}$, $\theta\in \Theta,$ be tanh neural networks with smooth activation function $\sigma$, at most $L-1$ hidden layers, width at most $W$ and weights and biases bounded by $R$. For $q=i,t,s$ let the PINN generalization $\Eg^q$ and training $\Et^q$ errors for linear Kolmogorov PDEs (cf. Section \ref{sec:Kolmogorov}) and let $c_q>0$ be such that $\Et^q(\theta)^2,\Eg^q(\theta,\S)^2 \in [0,c_q]$ for all $\theta\in\Theta$ and $\S\subset D^M$. Assume that $\max\{\norm{\varphi}_\infty, \norm{\psi}_\infty\}\leq \max_{\theta\in\Theta}\norm{u_\theta}_\infty$ and define the constants
\begin{align}
    \begin{split}
        \alpha &= \max\{1,\abs{a}, \abs{b}, \norm{\sigma}_\infty\},\qquad \beta= \max\{1, \norm{\sigma'}_\infty,\norm{\sigma''}_\infty,\norm{\sigma'''}_\infty\},\\
        C&=\max_{x\in D}\left(1+\sum_{i=1}^d\abs{\mu(x)_i}+\sum_{i,j=1}^d\abs{(\sigma(x)\sigma(x)^*)_{ij}}\right).
    \end{split}
\end{align}
Then for any $\epsilon>0$ it holds that
\begin{equation}\label{eq:bound-Mq}
 \Aeg^q \leq \epsilon +\Aet^q \quad \text{ if }  M_q \geq \frac{24dL^2W^2c_q^2}{\epsilon^4}\ln(4c_qRW\beta\sqrt[6]{\frac{C(d+7)}{\epsilon^2}}).
\end{equation}

\end{corollary}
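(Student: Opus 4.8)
The plan is to apply the ensemble bound of Corollary~\ref{cor:bound-generalization} separately to each of the three residual terms $q\in\{i,s,t\}$, in the same way that Corollary~\ref{cor:gen-supervised} handles a single supervised term. For each $q$ one must identify four quantities entering that corollary: the uniform bound $c\leftarrow c_q$ on the squared errors (supplied by hypothesis), the radius $a\leftarrow R$ of the parameter box, the number $k$ of trainable parameters, and a common Lipschitz constant $\mathfrak{L}\leftarrow\mathfrak{L}_q$ for the maps $\theta\mapsto\Eg^q(\theta)^2$ and $\theta\mapsto\Et^q(\theta,\S)^2$. The parameter count is estimated exactly as in Corollary~\ref{cor:gen-supervised}: a network with at most $L-1$ hidden layers of width at most $W$ and input dimension $d$ has at most $(d+(L-2)W+1)W$ weights and $(L-1)W+1$ biases, so one may take $k\leq 2dLW^2$.

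The substance of the proof is the Lipschitz estimate, and this is where the appendix does the work. For the data residuals $q=s$ and $q=t$ one argues precisely as in Corollary~\ref{cor:gen-supervised}: via the inverse triangle inequality and $a^2-b^2=(a+b)(a-b)$, the difference $|\Eg^q(\vartheta)^2-\Eg^q(\theta)^2|$ is bounded by $2\big(\max_\theta\norm{u_\theta}_\infty+\max\{\norm{\varphi}_\infty,\norm{\psi}_\infty\}\big)\int\abs{u_\vartheta-u_\theta}\,d\mu$, where the first factor is at most $4\max_\theta\norm{u_\theta}_\infty$ thanks to the assumption $\max\{\norm{\varphi}_\infty,\norm{\psi}_\infty\}\leq\max_\theta\norm{u_\theta}_\infty$, and the integral is controlled by the Lipschitz constant of $\theta\mapsto u_\theta$ (Lemma~\ref{lem:NN-lipschitz}, together with the bounds on $\sigma,\sigma',\sigma'',\sigma'''$ packaged into $\alpha,\beta$). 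For the interior residual $q=i$ the only new feature is that $\mathcal{R}_i[u_\theta]=\partial_t u_\theta-\L{u_\theta}$ is a fixed linear combination — with $x$-dependent coefficients $\mu_i$ and $(\sigma\sigma^*)_{ij}$ bounded on $[a,b]^d$ by the constant $C$ — of $u_\theta$ together with its first- and second-order partial derivatives; hence $\theta\mapsto\mathcal{R}_i[u_\theta]$ is Lipschitz because $\theta\mapsto\partial^\alpha u_\theta$ is Lipschitz for every multi-index with $\abs{\alpha}\leq 2$, with a constant that is essentially exponential in $L$ and whose base is polynomial in $W$, $R$ and $\beta$, and which scales linearly in $C$ and in $d+7$. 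Assembling these pieces is exactly the content of Lemma~\ref{lem:lqq-calculation}, which produces a bound on $\mathfrak{L}_q$ whose logarithm is of order $L\ln(RW\beta)$ plus lower-order terms of the form $\ln\!\big(C(d+7)/\epsilon^2\big)$ (the factor $\alpha$, arising from the input box, being absorbed into the crude constants).

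With $c\leftarrow c_q$, $a\leftarrow R$, $k\leftarrow 2dLW^2$ and $\mathfrak{L}\leftarrow\mathfrak{L}_q$ in hand, I would substitute into the sample-size condition $M\geq\frac{2c^2}{\epsilon^4}\big(k\ln(\tfrac{4a\mathfrak{L}}{\epsilon^2})+\ln(\tfrac{2c}{\epsilon^2})\big)$ of Corollary~\ref{cor:bound-generalization}. Because $\ln(4R\mathfrak{L}_q/\epsilon^2)$ carries an extra factor of order $L$, the product $k\ln(\cdots)$ becomes of order $dL^2W^2\ln(\cdots)$, while the term $\ln(2c_q/\epsilon^2)$ is dominated; crude majorizations — absorbing numerical constants and pulling a sixth power out of the logarithm via $\ln(X^{1/6})=\tfrac16\ln X$ to combine the $\ln(RW\beta)$ and $\ln(C(d+7)/\epsilon^2)$ contributions — then collapse the condition into the stated $M_q\geq\frac{24dL^2W^2c_q^2}{\epsilon^4}\ln\!\big(4c_qRW\beta\sqrt[6]{C(d+7)/\epsilon^2}\big)$, whereupon Corollary~\ref{cor:bound-generalization} yields $\Aeg^q\leq\epsilon+\Aet^q$. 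The only genuinely delicate point is tracking the exact dependence of $\mathfrak{L}_q$ on $W$, $R$ and $L$ — in particular how differentiating the network twice inflates the $W,R$-powers and how $\sigma$ and its derivatives enter through $\beta$ — which is precisely why that step is relegated to Appendix~\ref{app:lipschitz} and Lemma~\ref{lem:lqq-calculation}; everything downstream is bookkeeping.
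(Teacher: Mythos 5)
Your proposal is correct and follows the same route as the paper: identify $c\leftarrow c_q$, $a\leftarrow R$, $k\leftarrow 2dLW^2$, and the Lipschitz constant $\mathfrak{L}$ from Lemma~\ref{lem:lqq-calculation}, then substitute into the sample-size condition of Corollary~\ref{cor:bound-generalization} and simplify the logarithm. You add a useful high-level sketch of why Lemma~\ref{lem:lqq-calculation} holds, but that content lives in the appendix, not in the body of the paper's proof, which consists of exactly the substitution and the one-line logarithmic majorization you describe.
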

\begin{proof}
Setting $C=\max_{x\in D}\left(1+\sum_{i=1}^d\abs{\mu(x)_i}+\sum_{i,j=1}^d\abs{(\sigma(x)\sigma(x)^*)_{ij}}\right)$, we can use Corollary \ref{cor:bound-generalization} with $a\leftarrow R$, $c\leftarrow c_q$, $\mathfrak{L}\leftarrow 2^5 C^2(d+7)^2L^4R^{6L-1}W^{6L-6}\beta^{2L}$ (cf. Lemma \ref{lem:lqq-calculation}) and $k\leftarrow 2dLW^2$ (cf. proof of Corollary \ref{cor:gen-supervised}). We then calculate
\begin{equation}
    k\ln(\frac{4a\mathfrak{L}}{\epsilon^2})+\ln(\frac{2c_q}{\epsilon^2}) \leq 6kL \ln(4c_qRW\beta\sqrt[6]{\frac{C(d+7)}{\epsilon^2}}) = 12dL^2W^2\ln(4c_qRW\beta\sqrt[6]{\frac{C(d+7)}{\epsilon^2}}).
\end{equation}
\end{proof}

\begin{remark}\label{rem:cq}
Corollary \ref{cor:gen-pinn} requires bounds $c_q$ on the training errors $\Et^q$ and the generalization errors $\Eg^q$ of the PINN. Lemma \ref{lem:lqq-calculation} provides such bounds, given by $c_i = 4 \alpha C(d+7)L^2R^{3L}W^{3L-3}\beta^L$ and $c_t=c_s =2 WR$. Although the values for $c_t$ and $c_s$ are of reasonable size, the value for $c_i$ is likely to be a large overestimate. It might makes sense to consider the approximation
\begin{equation}
    c_i \approx \max_{n,m} \Et^i(\theta_n,\{x_m\})
\end{equation}
for some randomly sampled $\theta_n\in\Theta$ and $x_m\in D$.
\end{remark}

Combining Corollary \ref{cor:gen-pinn} with Theorem \ref{thm:L2-error} allows us to bound the $L^2$-error of the PINN in terms of the (cumulative) training error and the training set size. The following corollary proves that a well-trained PINN on average has a low $L^2$-error provided that the training set is large enough. It is also possible to prove a similar probabilistic statement instead of a statement that holds on average. 

\begin{corollary}\label{cor:L2-error}
Let $u$ be a (classical) solution to a linear Kolmogorov equation \eqref{eq:kolmogorov-pde} with $\mu\in C^1(D;\mathbb{R}^d)$ and $\sigma\in C^1(D;\mathbb{R}^{d\times d})$, $u^* = u_{\theta^*(\S)}$ a trained PINN, let $\Aet^i, \Aet^s$ and $\Aet^t$ denote the interior, spatial and temporal cumulative training error, cf. \eqref{eq:def-generalization-error} and let $C_1$, $C_2$ and $C_3$ be the constants as defined in Theorem \ref{thm:L2-error}. If the training set sizes where chosen as in \eqref{eq:bound-Mq} of Corollary \ref{cor:gen-pinn} for some $\epsilon>0$, then 
\begin{align}\label{eq:cor-L2}
    \begin{split}
\int_{(D\times [0,T])^M}\int_{D\times [0,T]} \abs{u(x,t)-u_{\theta^*(\S)}(x,t)}^2dxdt d\mu^M(\S) 
&\leq C_1 \left[(\Aet^i)^2 + (\Aet^t)^2 + C_2(\Aet^s+\sqrt{\epsilon}) + C_3 (\Aet^s)^2 +(C_3+2)\epsilon\right].
    \end{split}
\end{align}
\end{corollary}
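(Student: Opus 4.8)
The plan is to chain Theorem~\ref{thm:L2-error} with Corollary~\ref{cor:gen-pinn}, taking expectations over the random draw of the training set. First I would fix a realization of $\S=(\S_i,\S_s,\S_t)$ and apply Theorem~\ref{thm:L2-error} to the trained PINN $u^{*}=u_{\theta^{*}(\S)}$. The key observation is that the three residual norms on the right-hand side of \eqref{eq:reserr} are, by definition, precisely the PINN generalization errors associated with $\theta^{*}(\S)$, namely $\norm{\mathcal{R}_i[u_{\theta^{*}(\S)}]}_{L^2(D\times[0,T])}^2=\Eg^i(\theta^{*}(\S))^2$, $\norm{\mathcal{R}_t[u_{\theta^{*}(\S)}]}_{L^2(D)}^2=\Eg^t(\theta^{*}(\S))^2$ and $\norm{\mathcal{R}_s[u_{\theta^{*}(\S)}]}_{L^2(\partial D\times[0,T])}=\Eg^s(\theta^{*}(\S))$. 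Hence \eqref{eq:reserr} gives, pointwise in $\S$, a bound of the form $\norm{u-u_{\theta^{*}(\S)}}_{L^2(D\times[0,T])}^2\le C_1(\Eg^i(\theta^{*}(\S))^2+\Eg^t(\theta^{*}(\S))^2+C_2\Eg^s(\theta^{*}(\S))+C_3\Eg^s(\theta^{*}(\S))^2)$.

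Second, I would integrate this inequality over $\S$ with respect to $\mu^M$. The three squared terms integrate directly to the cumulative generalization errors $(\Aeg^i)^2$, $(\Aeg^t)^2$ and $(\Aeg^s)^2$ by the definition \eqref{eqn:acc-training-error}. For the single linear term I would use the Cauchy--Schwarz inequality (equivalently Jensen, since $\mu^M$ is a probability measure): $\int_{D^M}\Eg^s(\theta^{*}(\S))\,d\mu^M(\S)\le\left(\int_{D^M}\Eg^s(\theta^{*}(\S))^2\,d\mu^M(\S)\right)^{1/2}=\Aeg^s$. This leaves the left-hand side of \eqref{eq:cor-L2} bounded by $C_1((\Aeg^i)^2+(\Aeg^t)^2+C_2\Aeg^s+C_3(\Aeg^s)^2)$.

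Third, since by hypothesis each training set size $M_q$ was chosen as in \eqref{eq:bound-Mq}, Corollary~\ref{cor:gen-pinn} applies for every $q\in\{i,s,t\}$ and gives $\Aeg^q\le\epsilon+\Aet^q$; moreover the proof of Corollary~\ref{cor:bound-generalization} actually establishes the sharper $(\Aeg^q)^2\le(\Aet^q)^2+\epsilon^2$. Substituting these, and using the elementary estimates $\epsilon^2\le\epsilon$ and $\epsilon\le\sqrt{\epsilon}$ valid for $0<\epsilon\le1$, the interior and temporal terms contribute $(\Aet^i)^2+\epsilon$ and $(\Aet^t)^2+\epsilon$, the linear spatial term contributes $C_2(\Aet^s+\sqrt{\epsilon})$, and the quadratic spatial term contributes $C_3((\Aet^s)^2+\epsilon)$; collecting the three $\epsilon$-remainders yields the $(C_3+2)\epsilon$ in \eqref{eq:cor-L2} and finishes the argument.

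The only genuinely delicate point is that the constant $C_2$ in Theorem~\ref{thm:L2-error} is not an absolute constant: it involves $J_x[u-u_\theta]$ and hence, after the specialization $\theta=\theta^{*}(\S)$, it depends on the training set, so strictly speaking it cannot be pulled out of the integral over $\S$. I would handle this either by replacing $C_2$ throughout with the finite uniform bound $\sup_{\S}C_2(\S)$ --- finite because the weights of $u_\theta$ are bounded by $R$, so $J_x[u_\theta]$ is uniformly bounded on the compact domain --- or by keeping $C_2(\S)$ inside and applying Cauchy--Schwarz directly to $\int_{D^M}C_2(\S)\Eg^s(\theta^{*}(\S))\,d\mu^M(\S)$. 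With that point settled, everything else is a direct substitution of the two previous results and routine bookkeeping of the $\epsilon$-terms.
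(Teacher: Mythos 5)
Your proposal is correct and essentially matches the paper's (very terse) proof: the paper likewise chains Theorem~\ref{thm:L2-error} with Corollary~\ref{cor:gen-pinn}, and the ``delicate point'' you flag about the $\S$-dependence of $C_2$ is exactly what the paper addresses by instructing one to take the expectation over $\S$ \emph{before} applying H\"older's inequality inside the proof of Theorem~\ref{thm:L2-error}, which coincides with your second proposed fix. You have also correctly noticed that the bookkeeping requires the sharper bound $(\Aeg^q)^2 \leq (\Aet^q)^2 + \epsilon^2$ established in the proof of Corollary~\ref{cor:bound-generalization} rather than only its stated conclusion $\Aeg^q \leq \epsilon + \Aet^q$, which would otherwise leave unwanted cross terms.
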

\begin{proof}
This is a direct consequence of Corollary \ref{cor:gen-pinn} and the proof of Theorem \ref{thm:L2-error} (in particular, one needs to take the expectation of all training sets $\S$ before applying Hölder's inequality in the proof of Theorem \ref{thm:L2-error}). 
\end{proof}

\begin{remark}
If we assume that the optimization algorithm used to minimize the training loss finds a global minimum, then one can prove that the cumulative training errors in \eqref{eq:cor-L2} are small if the training set is large enough.  
To see this, first observe that for the network $\Psi_{\varepsilon, d}$ that was constructed in Theorem \ref{thm:approx-pinn} it holds that $\Eg^i(\theta_\Psi), \Eg^s(\theta_\Psi)$ and $\Eg^t(\theta_\Psi)$ are all of order $\bigO(\varepsilon)$.
Since $\Psi_{\varepsilon, d}$ is not correlated with the training data $\S$, one can use a Monte Carlo argument to find for any $\epsilon>0$ that
\begin{equation}
 \Et^q(\theta_\Psi) \leq \epsilon + \Eg^q(\theta_\Psi) \quad \text{ if }  M_q \sim c_q^2\epsilon^{-2}
\end{equation}
and as a consequence that $ \Et^q(\theta_\Psi) = \bigO(\epsilon)$. 
If the optimization algorithm reaches a global minimum, the training loss of $u_{\theta^*(\S)}$ will be upper bounded by that of $\Psi_{\varepsilon, d}$. Therefore it also holds that $ \Aet^q = \bigO(\epsilon)$. 
\end{remark}

Thus, in Corollaries \ref{cor:gen-pinn} and \ref{cor:L2-error}, we have answered the question Q3 by proving that a small training error and a sufficiently large number of samples, as chosen in \eqref{eq:bound-Mq}, suffice to ensure a small generalization error (and total error). Moreover, the number of samples only depends polynomially on the dimension. Therefore, it overcomes the \emph{curse of dimensionality}.

\section{Discussion}
\label{sec:5}
Physics informed neural networks (PINNs) are widely used in approximating both forward as well as inverse problems for PDEs. However, there is a paucity of rigorous theoretical results on PINNs that can explain their excellent empirical performance. In particular, one wishes to answer the questions Q1 (on the smallness of PINN residuals), Q2 (smallness of the total error) and Q3 (smallness of the generalization error if the training error is small) in order to provides rigorous guarantees for PINNs. 

In this article, we aimed to address these theoretical questions rigorously. We do so within the context of the Kolmogorov equations, which are linear parabolic PDEs of the general form \eqref{eq:kolmogorov-pde}. The heat equation as well as the Black-Scholes equation of option pricing are prototypical examples of these PDEs. Moreover, these PDEs can be set in very high-dimensional spatial domains. Thus, in addition to providing rigorous bounds on the PINN generalization error and total error, we also aimed to investigate whether PINNs can overcome the curse of dimensionality in this context. 

To this end, we answered question Q1 in Theorem \ref{thm:approx-pinn}, where we constructed a PINN (see Figure \ref{fig:flowchart}) for which the PINN residual (generalization error) can be made as small as possible. Our constuction relied on emulating Dynkin's formula \eqref{eq:dynkin}. Under suitable assumptions on the initial data as well as on the underlying stochastic process (cf. \eqref{eq:rhod} and Theorem \ref{thm:approx-heat}), we are also able to prove that the size of the constructed only grew polynomially, in input spatial dimension. Thus, we were able to show that this PINN was able to overcome the curse of dimensionality in attaining as small a residual as desired. 

Next, we answered question Q2 in Theorem \ref{thm:L2-error} by leveraging the stability of Kolmogorov PDEs to bound the total error (in $L^2$) for PINNs in terms of the underlying generalization error. 

Finally, question Q3 that required one to bound the generalization error in terms of the training error was answered by using an error decomposition, Lipschitz continuity of the underlying generalization and training error maps and concentration inequalities in Corollary \ref{cor:gen-pinn}, where we derived a bound on the generalization error in terms of the training error and for sufficiently many randomly chosen training samples \eqref{eq:bound-Mq}. Moreover, the number of training samples only grew polynomially in the dimension, alleviating the curse of dimensionaly in this regard. 


Although we do not present numerical experiments in this paper, we point the readers to \cite{RT} and the forthcoming paper \cite{MMT1}, where a large number of numerical experiments for PINNs in approximating both forward and inverse problems for Kolmogorov type and related equations, are presented. In particular, these experiments reveal that PINNs overcome the curse of dimensionality in this context. These findings are consistent with our theoretical results. 

At this stage, it is instructive to contrast our results with related works. As mentioned in the introduction, there are very few papers where PINNs are rigorously analyzed. When comparing to \cite{shin2020convergence}, we highlight that the fact that the authors of \cite{shin2020convergence} used a special bespoke H\"older-type regularization term that penalized the gradients in their loss function. In practice, one trains PINNs in the $L^2$ (or $L^1$) setting and it is unclear how relevant the assumptions of \cite{shin2020convergence} are in this context. On the other hand, we use the natural training paradigm for PINNs and prove rigorously that overall errors can be made small. Comparing with \cite{MM1}, we observe that the authors of \cite{MM1} only address questions Q2 and (partially) Q3, but in a very general setting. It is not proved in \cite{MM1} that the total error can be made small. We do so here. Moreover, we also provide the first bounds for PINNs, where the curse of dimensionality is alleviated. 

It is an appropriate juncture to compare our results with a large number of articles demonstrating the alleviation of the curse of dimensionality for neural networks approximating Kolmogorov type PDEs, see \cite{grohs2018proof,berner2020analysis} and references therein. We would like to point out that these articles consider the \emph{supervised learning} paradigm, where (possibly large amounts of) data needs to be provided to train the neural network for approximating solutions of PDEs. This data has to be generated by either expensive numerical simulations or the use of representation formulas such as the Feynman-Kac formulas, which requires solutions of underlying SDEs. In contrast, we recall that PINNs do not require \emph{any data} in the interior of the domain and thus are very diferent in design and conception to supervised learning frameworks. 

We would also like to highlight some limitations of our analysis. We showed in Theorem \ref{thm:approx-pinn} that network size in approximating solutions of general Kolmogorov equations \eqref{eq:kolmogorov-pde} depended on the rate of growth the quantity $\rho_d$, defined in \eqref{eq:rhod}. We were also able to prove in Theorem \ref{thm:approx-heat} that $\rho_d$ only grew polynomially (in dimension) for a subclass of Kolmogorov PDEs. Extending these results to general Kolmogorov PDEs is an open question. Moreover, it is worth repeating (see Remark \ref{rem:cq}) that the constants in our estimates are clearly not optimal and might be significant overestimates, see \cite{MM1} for a discussion on this issue. 

Finally, we point out that although we focussed our results on the large and important class of Kolmogorov PDEs in this paper, the methods that we developed will be very useful in the analysis of PINNs for approximating PDEs. In particular, the use of smoothness of the underlying PDEs solutions and their approximation by Tanh neural networks (as in \cite{deryck2021approximation}), to build PINNs with small PDE residuals can be applied to a variety of linear and non-linear PDEs. Similarly, the error decomposition \eqref{eq:error-decomp-2} and Theorem \ref{thm:bound-generalization} (Corollary \ref{cor:bound-generalization}) are very general and can be used in many different contexts, to bound PINN generalization error by training error, for sufficiently many random training points. We plan to apply these techniques for the comprehensive error analysis of PINNs for approximating forward as well as inverse problems for PDEs in forthcoming papers.

\bibliographystyle{abbrv}
\bibliography{ref}

\appendix

\section{Additional material for Section \ref{sec:3}
}\label{app:approximation}

\subsection{Auxiliary results}\label{app:approx-aux}

\begin{lemma}\label{lem:MC1}
Let $p\in[2,\infty)$, $d,m\in\mathbb{N}$, let $(\Omega, \mathcal{F}, \mathcal{P})$ be a probability space, and let $X_i:\Omega\to\mathbb{R}^d, i\in\{1,\ldots, m\}$, be i.i.d. random variables with $\E{\norm{X_1}}<\infty$. Then it holds that
\begin{equation}
    \left(\E{\norm{\E{X_1}-\frac{1}{m}\sum_{i=1}^m X_i}^p}\right)^{1/p} \leq 2 \sqrt{\frac{p-1}{m}}\left(\E{\norm{\E{X_1}-X_1}^p}\right)^{1/p}. 
\end{equation}
\end{lemma}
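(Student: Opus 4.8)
The plan is to reduce to a sum of independent, centered random vectors and then combine a symmetrization step (which contributes the factor $2$) with the hypercontractivity of a degree-one Rademacher chaos (which contributes the sharp factor $\sqrt{p-1}$).

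First I would set $Y_i := X_i - \E{X_1}$, so that the $Y_i$ are i.i.d., centered, and $\E{X_1} - \frac1m\sum_{i=1}^m X_i = -\frac1m\sum_{i=1}^m Y_i$; hence it suffices to prove $\big(\E{\norm{\sum_{i=1}^m Y_i}^p}\big)^{1/p} \le 2\sqrt{(p-1)m}\,\big(\E{\norm{Y_1}^p}\big)^{1/p}$ and divide by $m$ at the end. Next, introduce an i.i.d. Rademacher sequence $(\varepsilon_i)_{i=1}^m$, independent of $(Y_i)_i$, together with an independent copy $(Y_i')_i$ of $(Y_i)_i$. The standard symmetrization argument — insert $\E{Y_i'}=0$, apply Jensen's inequality conditionally on $(Y_i)_i$, use that $\sum_i\varepsilon_i(Y_i-Y_i')$ has the same law as $\sum_i(Y_i-Y_i')$, and finally split via the triangle inequality and convexity of $t\mapsto t^p$ — yields $\E{\norm{\sum_i Y_i}^p} \le 2^{p}\,\E{\norm{\sum_i \varepsilon_i Y_i}^p}$.

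Now condition on $(Y_i)_i$ and estimate $\E_\varepsilon{\norm{\sum_i \varepsilon_i Y_i}^p}$ coordinate by coordinate. For each fixed $k\in\{1,\dots,d\}$ the scalar $\sum_i \varepsilon_i (Y_i)_k$ is a Rademacher chaos of degree one, so the Bonami--Beckner (hypercontractivity) inequality gives $\E_\varepsilon{\abs{\sum_i \varepsilon_i (Y_i)_k}^p} \le (p-1)^{p/2}\big(\sum_i (Y_i)_k^2\big)^{p/2}$ for $p\ge2$. Writing $\norm{\sum_i\varepsilon_iY_i}^2 = \sum_k (\sum_i\varepsilon_i(Y_i)_k)^2$ and applying Minkowski's inequality in $L^{p/2}(\mathbb{P}_\varepsilon)$ (legitimate since $p/2\ge1$) then produces $\big(\E_\varepsilon{\norm{\sum_i\varepsilon_iY_i}^p}\big)^{2/p} \le (p-1)\sum_i\norm{Y_i}^2$. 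Taking expectation over $(Y_i)_i$ and invoking Minkowski's inequality in $L^{p/2}$ once more, this time over $\Omega$, together with the i.i.d. hypothesis, gives $\big(\E{\norm{\sum_i\varepsilon_iY_i}^p}\big)^{2/p} \le (p-1)\sum_i\big(\E{\norm{Y_i}^p}\big)^{2/p} = (p-1)\,m\,\big(\E{\norm{Y_1}^p}\big)^{2/p}$.

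Chaining the three estimates and dividing by $m$ delivers the claimed bound with constant $2\sqrt{(p-1)/m}$. The only non-elementary ingredient is the degree-one hypercontractivity estimate with the sharp constant $\sqrt{p-1}$; everything else is symmetrization and two uses of Minkowski's integral inequality, and I expect the main nuisance to be keeping the nested $L^p$/$L^{p/2}$ bookkeeping straight rather than any conceptual difficulty. Alternatively, one can simply cite a Marcinkiewicz--Zygmund/Burkholder-type inequality for Hilbert-space-valued sums of independent mean-zero random variables, at the possible cost of a worse constant.
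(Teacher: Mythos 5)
Your argument is correct, but it is a genuine proof whereas the paper simply cites \cite[Corollary~2.5]{grohs2018proof} without reproducing an argument. Your route — recenter, symmetrize with an independent Rademacher sequence to pick up the factor $2$, then bound the conditional Rademacher moment via hypercontractivity of degree-one chaos (equivalently, Khintchine's inequality with constant $\sqrt{p-1}$), and finish with two applications of Minkowski's inequality in $L^{p/2}$, first over the $d$ coordinates and then over the $m$ i.i.d.\ summands — reproduces exactly the constant $2\sqrt{(p-1)/m}$ in the statement, so nothing is lost. One small bookkeeping point worth making explicit when you write it up: the lemma's hypothesis is only $\E{\norm{X_1}}<\infty$, which guarantees $\E{X_1}$ is defined; if $\E{\norm{X_1}^p}=\infty$ the right-hand side is $+\infty$ and the inequality is vacuous, so you may as well assume $\E{\norm{X_1}^p}<\infty$ throughout the manipulations. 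The only non-elementary input is the degree-one Bonami--Beckner estimate $\|\sum_i\varepsilon_i a_i\|_{L^p}\le\sqrt{p-1}\,(\sum_i a_i^2)^{1/2}$ for $p\ge2$, which you correctly flag; the advantage of your approach over the paper's bare citation is that it is self-contained and makes transparent where each piece of the constant comes from, at the cost of invoking hypercontractivity where the cited reference presumably packages the same content as a Marcinkiewicz--Zygmund-type inequality.
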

\begin{proof}
This result is \cite[Corollary 2.5]{grohs2018proof}.
\end{proof}
\begin{lemma}\label{lem:MC}
Let $p\in[2,\infty)$, $q,m\in\mathbb{N}$, let $(\Omega, \mathcal{F}, \mathcal{P})$ and $(\mathcal{D}, \mathcal{A}, \mu)$ be probability spaces, and let for every $q\in\mathcal{D}$ the maps $X_i^q:\Omega\to\mathbb{R}, i\in\{1,\ldots, m\}$, be i.i.d. random variables with $\E{\abs{X^q_1}}<\infty$. Then it holds that
\begin{equation}
    \E{\left(\int_{\mathcal{D}}\abs{\E{X^q_1}-\frac{1}{m}\sum_{i=1}^m X^q_i}^p\mu(dq)\right)^{1/p}} \leq 2\sqrt{\frac{p-1}{m}} \left(\int_{\mathcal{D}}\E{\abs{\E{X^q_1}-X_1^q}^p}\mu(dq)\right)^{1/p}.
\end{equation}
\end{lemma}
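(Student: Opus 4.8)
The plan is to reduce Lemma \ref{lem:MC} to the already-established pointwise Monte Carlo estimate of Lemma \ref{lem:MC1} (its $d=1$ case), essentially by viewing the quantity inside the outer expectation as an $L^p$-norm and applying a vector-valued (Banach-space-valued) version of the Monte Carlo bound. Concretely, for each fixed $\omega\in\Omega$ the map $q\mapsto \E{X^q_1}-\frac1m\sum_{i=1}^m X^q_i(\omega)$ is an element of the Banach space $E := L^p(\mathcal{D},\mathcal{A},\mu)$, so that
\begin{equation}
    \left(\int_{\mathcal{D}}\abs{\E{X^q_1}-\tfrac1m\sum_{i=1}^m X^q_i}^p\mu(dq)\right)^{1/p} = \norm{\E{X^\cdot_1}-\tfrac1m\sum_{i=1}^m X^\cdot_i}_{E}.
\end{equation}
Viewing $Y_i := X^\cdot_i \in E$ as i.i.d. $E$-valued random variables, the left-hand side of the claimed inequality is exactly $\E{\norm{\E{Y_1}-\frac1m\sum_{i=1}^m Y_i}_E}$, and the right-hand side is $2\sqrt{(p-1)/m}\,(\E{\norm{\E{Y_1}-Y_1}_E^p})^{1/p}$ (after using Fubini to swap the $\mathcal{D}$-integral and the $\Omega$-expectation inside the $p$-th moment on the right). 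So the statement is precisely the $L^p$-version of Lemma \ref{lem:MC1} with scalars replaced by elements of $E = L^p(\mu)$.

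First I would record the measurability and integrability bookkeeping: $\E{\abs{X^q_1}}<\infty$ for $\mu$-a.e. $q$ together with Fubini/Tonelli shows $\E{Y_1}$ is a well-defined element of $E$ and that the relevant expectations are finite (at least this is what one needs to justify the formal manipulations; in the application the integrands are bounded so there is nothing delicate). Then I would invoke the key fact that the constant $2\sqrt{(p-1)/m}$ in Lemma \ref{lem:MC1} is in fact valid for i.i.d. random variables taking values in any $L^p$-space, not just $\mathbb{R}^d$ — indeed \cite[Corollary 2.5]{grohs2018proof} is itself stated/proved via the $L^p$-version of a Marcinkiewicz–Zygmund / Burkholder-type inequality whose constant depends only on $p$ and on the cotype/martingale-type-$2$ structure, which $L^p$ ($p\ge 2$) shares with $\mathbb{R}^d$ uniformly in dimension. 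Alternatively, and more elementarily, one can apply Minkowski's integral inequality to pull the $E = L^p(\mu)$-norm outside, reducing to the scalar case pointwise in $q$: for each fixed $q$, Lemma \ref{lem:MC1} (with $d=1$) gives $(\E{\abs{\E{X^q_1}-\frac1m\sum X^q_i}^p})^{1/p}\le 2\sqrt{(p-1)/m}\,(\E{\abs{\E{X^q_1}-X^q_1}^p})^{1/p}$, and then
\begin{equation}
    \E{\norm{\cdots}_E} \le \left(\E{\norm{\cdots}_E^p}\right)^{1/p} = \left(\int_{\mathcal{D}}\E{\abs{\cdots}^p}\mu(dq)\right)^{1/p} \le 2\sqrt{\tfrac{p-1}{m}}\left(\int_{\mathcal{D}}\E{\abs{\E{X^q_1}-X^q_1}^p}\mu(dq)\right)^{1/p},
\end{equation}
where the first inequality is Jensen and the middle equality is Fubini.

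The main obstacle is the direction of the inequality in the Jensen step: bounding $\E{\norm{\cdot}_E}$ by $(\E{\norm{\cdot}_E^p})^{1/p}$ is fine (Jensen goes the right way), but one must be careful that after swapping with Fubini we are left with $(\int_{\mathcal D}\E{\abs{\cdot}^p}\mu(dq))^{1/p}$ rather than $\int_{\mathcal D}(\E{\abs{\cdot}^p})^{1/p}\mu(dq)$ — it is the former that matches the statement, and the former is the larger of the two by Minkowski/Jensen in $q$, so applying the scalar Lemma \ref{lem:MC1} under the $\mu$-integral and then taking the $1/p$ power is exactly what is needed and no reverse inequality is required. So in fact the elementary route above (Jensen in $\omega$, Fubini, scalar bound under the integral) works cleanly and the only real content is Lemma \ref{lem:MC1} itself; the "hard part" is merely to set up the identification of the integral as an $L^p$-norm and to check the measurability/integrability hypotheses so that Fubini applies. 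I would therefore present the proof as: (i) rewrite the left side via Jensen and Fubini as $2$-norm-free upper bound $(\int_{\mathcal D}\E{\abs{\cdot}^p}\,d\mu)^{1/p}$; (ii) apply Lemma \ref{lem:MC1} pointwise in $q$ inside the $\mu$-integral; (iii) conclude.
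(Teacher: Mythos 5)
Your proof is correct and follows essentially the same route as the paper's (the paper's proof pointer to \cite[eq.\ (226)]{grohs2018proof} amounts to exactly the chain of Jensen/H\"older in $\omega$, Fubini to swap the $\Omega$- and $\mathcal{D}$-integrals, and then Lemma \ref{lem:MC1} applied pointwise in $q$). The detour through Banach-space-valued Monte Carlo and Minkowski's integral inequality is unnecessary scaffolding --- as you yourself conclude --- but the final elementary argument you settle on is the intended one.
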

\begin{proof}
The proof involves Hölder's inequality, Fubini's theorem and Lemma \ref{lem:MC1}. The calculation is as in \cite[eq. (226)]{grohs2018proof}.
\end{proof}

\begin{lemma}\label{lem:exp-to-prob}
Let $\epsilon>0$, let $(\Omega, \mathcal{F}, \mathcal{P})$ be a probability space, and let $X:\Omega\to\mathbb{R}$ be a random variable that satisfies $\E{\abs{X}}\leq \epsilon$. Then it holds that $\mathbb{P}(\abs{X}\leq \epsilon)>0$. 
\end{lemma}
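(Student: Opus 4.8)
The plan is to prove the contrapositive: if $\mathbb{P}(\abs{X}\leq \epsilon) = 0$, then $\E{\abs{X}} > \epsilon$. Suppose $\mathbb{P}(\abs{X}\leq \epsilon) = 0$. Then $\mathbb{P}(\abs{X} > \epsilon) = 1$, so $\abs{X} > \epsilon$ holds almost surely. Integrating this strict inequality would only give $\E{\abs{X}} \geq \epsilon$, so a small additional argument is needed to upgrade to a strict inequality; this is the one subtle point in an otherwise routine statement.

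To get strictness, I would exhaust $\{\abs{X} > \epsilon\}$ by the increasing sequence of events $A_n = \{\abs{X} \geq \epsilon + \tfrac{1}{n}\}$ for $n\in\mathbb{N}$. Since $\bigcup_n A_n = \{\abs{X} > \epsilon\}$ has probability $1$, continuity of measure from below gives $\mathbb{P}(A_n) \to 1$, so there exists $n_0$ with $\mathbb{P}(A_{n_0}) \geq \tfrac{1}{2}$ (any positive lower bound suffices). Then
\begin{equation}
    \E{\abs{X}} \geq \E{\abs{X}\mathbbm{1}_{A_{n_0}}} \geq \left(\epsilon + \tfrac{1}{n_0}\right)\mathbb{P}(A_{n_0}) \geq \left(\epsilon + \tfrac{1}{n_0}\right)\cdot \tfrac{1}{2}.
\end{equation}
This last bound need not exceed $\epsilon$ directly, so instead I would argue more carefully: since $\abs{X}\geq \epsilon$ almost surely we already have $\E{\abs{X}} \geq \epsilon$, and writing $\E{\abs{X}} = \E{(\abs{X}-\epsilon)} + \epsilon$ with $\abs{X}-\epsilon \geq 0$ a.s., it suffices to show $\E{\abs{X}-\epsilon} > 0$. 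But $\abs{X}-\epsilon \geq \tfrac{1}{n_0}\mathbbm{1}_{A_{n_0}} \geq 0$ a.s., hence $\E{\abs{X}-\epsilon} \geq \tfrac{1}{n_0}\mathbb{P}(A_{n_0}) > 0$, giving $\E{\abs{X}} > \epsilon$, a contradiction with the hypothesis $\E{\abs{X}}\leq\epsilon$.

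The main (and only) obstacle is the measure-theoretic care required to turn ``$\abs{X} > \epsilon$ almost surely'' into ``$\E{\abs{X}} > \epsilon$'': a naive integration of the strict pointwise inequality yields only a non-strict integral inequality, and one must use $\sigma$-additivity (continuity from below) to extract a set of positive probability on which $\abs{X}$ is bounded away from $\epsilon$ by a fixed margin. Everything else — taking the contrapositive, decomposing the expectation — is entirely routine.
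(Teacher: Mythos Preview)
Your argument is correct: the contrapositive together with continuity from below to find some $A_{n_0}=\{\abs{X}\geq \epsilon+1/n_0\}$ of positive probability, and then the decomposition $\E{\abs{X}}=\epsilon+\E{\abs{X}-\epsilon}\geq \epsilon+\tfrac{1}{n_0}\mathbb{P}(A_{n_0})>\epsilon$, does the job cleanly. (The first displayed inequality you wrote down was indeed a dead end, but you caught that yourself and the second pass is fine.)

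As for comparison with the paper: there is essentially nothing to compare. The paper does not give a proof at all; it simply cites \cite[Proposition 3.3]{grohs2018proof}. Your proposal therefore supplies a self-contained elementary argument where the paper defers to an external reference. If anything, one could streamline your write-up slightly by invoking the standard fact that a nonnegative random variable with zero expectation is zero almost surely (applied to $\abs{X}-\epsilon$), which is exactly what your $A_n$ argument reproves from scratch; but that is a matter of taste, not substance.
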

\begin{proof}
This result is \cite[Proposition 3.3]{grohs2018proof}.
\end{proof}

\begin{lemma}[Lévy's modulus of continuity]\label{lem:lévy}
For $(B_t)_{t\in [0,1]}$ a Brownian motion, it holds almost surely that
\begin{equation}
    \limsup_{h \downarrow 0} \sup_{0\leq t \leq 1 - h} \frac{\abs{B_{t+h}-B_t}}{\sqrt{2h\log(1/h)}}=1.
\end{equation}
\end{lemma}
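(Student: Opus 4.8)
The plan is to establish the two inequalities separately: the upper bound $\limsup \leq 1$ (which is the quantitative statement actually needed for the polynomial growth of $\rho_d$ in Theorem \ref{thm:approx-heat}) and the matching lower bound $\limsup \geq 1$. First I would fix a parameter $\kappa > 1$ and, for each $n\in\mathbb{N}$, discretize $[0,1]$ into the dyadic mesh $t_j = j2^{-n}$, $0\leq j \leq 2^n$. For the \emph{upper bound}, the key step is a union bound over the $2^n$ dyadic increments $B_{t_{j+1}}-B_{t_j}$, each of which is $\mathcal{N}(0,2^{-n})$, combined with the Gaussian tail estimate $\Prob{|\mathcal{N}(0,\tau^2)| > x} \leq 2\exp(-x^2/2\tau^2)$ applied with $x = \kappa\sqrt{2\cdot 2^{-n}\log 2^n}$; this makes $\sum_n \Prob{\exists j:\ |B_{t_{j+1}}-B_{t_j}| > x}$ summable, so by Borel--Cantelli, almost surely only finitely many scales violate the bound on the mesh. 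One then upgrades the bound on dyadic increments to a bound on \emph{all} increments $B_{t+h}-B_t$ with $h$ small: write a general small increment as a telescoping sum of dyadic increments across finer and finer scales (a standard chaining argument), and sum the resulting geometric-type series in $\sqrt{2^{-m}\log 2^m}$, which converges and is dominated by the leading term; letting $\kappa \downarrow 1$ through a countable sequence gives $\limsup \leq 1$.

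For the \emph{lower bound}, fix $\kappa < 1$. The key step here is the \emph{second} Borel--Cantelli lemma applied to the independent events $E_{n,j} = \{\,|B_{t_{j+1}}-B_{t_j}| > \kappa\sqrt{2\cdot 2^{-n}\log 2^n}\,\}$ for $j = 0,\dots,2^n-1$: by the Gaussian lower tail $\Prob{\mathcal{N}(0,1) > x} \geq \frac{x}{x^2+1}\frac{1}{\sqrt{2\pi}}e^{-x^2/2}$ one checks that $\Prob{E_{n,j}}$ decays like $2^{-n\kappa^2}$ up to polynomial factors, so $\Prob{\bigcap_j E_{n,j}^c} = (1-\Prob{E_{n,j}})^{2^n} \leq \exp(-2^n\Prob{E_{n,j}})$ is summable in $n$ (since $2^n \cdot 2^{-n\kappa^2} = 2^{n(1-\kappa^2)}\to\infty$). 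Hence almost surely, for all large $n$ there is some dyadic pair realizing an increment exceeding $\kappa\sqrt{2h\log(1/h)}$ with $h = 2^{-n}$, which forces $\limsup \geq \kappa$; letting $\kappa\uparrow 1$ finishes.

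The main obstacle is the chaining argument in the upper bound: one must carefully control the approximation of an arbitrary pair $(t, t+h)$ by dyadic points at the appropriate scale $m \approx \log_2(1/h)$ and verify that the tail of the series $\sum_{m \geq \log_2(1/h)} \sqrt{2^{-m} m \log 2}$ is asymptotically negligible compared to $\sqrt{2h\log(1/h)}$, uniformly as $h \downarrow 0$. Everything else is routine Gaussian tail estimation plus the two Borel--Cantelli lemmas. Since this is a classical result, I would in practice simply cite a standard reference (e.g. \cite{klebaner2012introduction} or Karatzas--Shreve), as the paper only needs the statement; the sketch above is the route one would take for a self-contained proof.
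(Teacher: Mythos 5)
The paper does not actually prove this lemma—it cites Lévy (1954) and standard textbooks—so your closing instinct (``in practice simply cite a standard reference'') is exactly what the paper does, and for the purposes of Theorem \ref{thm:approx-heat} only the upper bound $\limsup \leq 1$ is needed.

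Your sketch, however, has a genuine gap in the upper bound. The naive chaining you describe does \emph{not} yield the sharp constant $1$. After Borel--Cantelli you control the adjacent dyadic increments at every scale $m$ by $\kappa\sqrt{2\cdot 2^{-m}\log 2^{m}}$, and for a general pair $(t,t+h)$ with $2^{-n-1}<h\le 2^{-n}$ the telescoping decomposition gives a ``main'' term at scale $n$ plus two tails $\sum_{m>n}\kappa\sqrt{2\cdot 2^{-m}\,m\log 2}$. That tail is \emph{not} asymptotically negligible: since the ratio of consecutive terms $\sqrt{2^{-m}m}$ tends to $1/\sqrt{2}$, one has $\sum_{m>n}\sqrt{2^{-m}m}\sim \frac{1}{\sqrt{2}-1}\sqrt{2^{-n}n}$, i.e.\ the tail is a fixed constant ($\approx 2.4$) times the main term, and the two telescoping sums double this. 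Your argument therefore only proves $\limsup\le C\kappa$ for some absolute $C>1$, not $\limsup\le \kappa$. To recover the exact constant one must bound increments between \emph{non-adjacent} dyadic points directly—e.g.\ union over all pairs $(k2^{-n},\,(k+j)2^{-n})$ with $1\le j\le 2^{\theta n}$ for small $\theta>0$, which still gives a summable union bound when $\kappa^2>(1+\theta)/(1-\theta)$, and then a single such increment captures nearly all of $B_{t+h}-B_t$ with a genuinely negligible remainder. The lower bound via independence of same-scale dyadic increments plus first Borel--Cantelli applied to $\cap_j E_{n,j}^c$ is correct as you wrote it (though this is really a quantitative use of the first Borel--Cantelli lemma together with independence, not an application of the second Borel--Cantelli lemma as usually stated).
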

\begin{proof}
This result is due to \cite{levy1954theorie} and can be found in most probability theory textbooks.
\end{proof}

\begin{lemma}\label{lem:SDE-sol}
Let $T>0$, $p\geq 2$, $d,m\in\mathbb{N}$, let $(\Omega, \mathcal{F}, P, (\mathbb{F}_t)_{t\in[0,T]})$ be a stochastic basis and let $W:[0,T]\times\Omega\to \mathbb{R}^m$ be a standard $m$-dimensional Brownian motion on $(\Omega, \mathcal{F}, P, (\mathbb{F}_t)_{t\in[0,T]})$. Let $\lambda\in \mathcal{L}^p(P|_{\mathbb{F}_0}, \norm{\cdot}_{\mathbb{R}^d})$ and let $\mu:\mathbb{R}^d\to \mathbb{R}^d$ and $\sigma:\mathbb{R}^d\to \mathbb{R}^{d\times m}$ be affine functions. Then there exists an up to indistinguishability unique $(\mathbb{F}_t)_{t\in[0,T]}$-adapted stochastic process $X^\lambda:[0,T]\times \Omega\to \mathbb{R}^d$, which satisfies
\begin{enumerate}
    \item  that for all $t\in[0,T]$ it holds $P$-a.s. that
    \begin{equation}
        X^\lambda_t = \lambda +\int_0^t\mu(X^\lambda_s)ds + \int_0^t\sigma(X^\lambda_s)dW_s
    \end{equation}
    \item it holds that $\sup_{t\in [0,T]}\norm{X^\lambda_t}_{\mathcal{L}^p(P, \norm{\cdot}_{\mathbb{R}^d})}<\infty$, 
    \item it holds that for all $\alpha\in (0,\frac{1}{2}]$ that
    \begin{equation}
        \sup_{\substack{s,t \in [0,T],\\s<t}} \frac{\norm{X^\lambda_s-X^\lambda_t}_{\mathcal{L}^p(P, \norm{\cdot}_{\mathbb{R}^d})}}{\abs{s-t}^\alpha} < \infty, 
    \end{equation}
    \item for all $x\in\mathbb{R}^d$, $t\in[0,T]$ and $\omega\in\Omega$ it holds that
    \begin{equation}
    X^x_t(\omega) = \sum_{i=1}^d \left(X^{e_i}_t(\omega)-X^0_t(\omega)\right)x_i + X^0_t(\omega).
\end{equation}
\end{enumerate}
\end{lemma}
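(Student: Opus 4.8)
The plan is to reduce everything to classical Itô SDE theory, since affine $\mu$ and $\sigma$ are in particular globally Lipschitz with at most linear growth. Writing $\mu(z) = Az + a$ for a matrix $A$ and vector $a$ (and analogously for the entries of $\sigma$), one has $\norm{\mu(z)} + \norm{\sigma(z)} \leq C(1+\norm{z})$ and $\norm{\mu(z)-\mu(\tilde z)} + \norm{\sigma(z)-\sigma(\tilde z)} \leq C\norm{z-\tilde z}$, which is precisely the setting covered by the standard results quoted in \cite{oksendal2003stochastic,klebaner2012introduction,grohs2018proof}. For parts (1) and (2) I would invoke the standard existence-and-uniqueness theorem: the Picard iteration $X^{\lambda,(0)}_t = \lambda$, $X^{\lambda,(n+1)}_t = \lambda + \int_0^t \mu(X^{\lambda,(n)}_s)\,ds + \int_0^t \sigma(X^{\lambda,(n)}_s)\,dW_s$ converges in $\mathcal{L}^2$, uniformly on $[0,T]$, to an $(\mathbb{F}_t)$-adapted continuous process solving the integral equation; pathwise uniqueness follows from Grönwall's lemma applied to $t\mapsto \E{\sup_{s\leq t}\norm{X^\lambda_s-\widetilde X^\lambda_s}^2}$. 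The uniform $p$-th moment bound in (2) is obtained by applying the Burkholder--Davis--Gundy inequality and Grönwall's lemma to $t\mapsto \E{\sup_{s\leq t}\norm{X^\lambda_s}^p}$, using the linear growth bound and $\lambda \in \mathcal{L}^p$.

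For (3) I would derive the $\mathcal{L}^p$ Hölder estimate directly: for $s<t$ write $X^\lambda_t - X^\lambda_s = \int_s^t \mu(X^\lambda_r)\,dr + \int_s^t \sigma(X^\lambda_r)\,dW_r$, estimate the drift term by Hölder's inequality in $r$ (gaining a factor $\abs{t-s}$) and the diffusion term by BDG (gaining $\abs{t-s}^{1/2}$), and combine with the already-established uniform bound $\sup_{r\in[0,T]}\norm{X^\lambda_r}_{\mathcal{L}^p} < \infty$ to conclude $\norm{X^\lambda_t - X^\lambda_s}_{\mathcal{L}^p} \leq C\abs{t-s}^{1/2}$. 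Since $[0,T]$ is bounded this immediately yields the stated supremum being finite for every $\alpha\in(0,\tfrac12]$.

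The structurally interesting part is (4), and this is where the \emph{affineness} of $\mu,\sigma$ (rather than mere Lipschitz continuity) is essential. I would \emph{define} $Y^x_t := \sum_{i=1}^d (X^{e_i}_t - X^0_t)x_i + X^0_t$ and verify that $Y^x$ solves the SDE with initial datum $x$. At $t=0$ one has $X^{e_i}_0 = e_i$ and $X^0_0 = 0$, so $Y^x_0 = \sum_i (e_i-0)x_i + 0 = x$. By linearity of the Lebesgue and Itô integrals, $dY^x_t = \sum_i \big(\mu(X^{e_i}_t)-\mu(X^0_t)\big)x_i\,dt + \mu(X^0_t)\,dt + \big(\text{analogous }\sigma\text{ term}\big)\,dW_t$, and since $\mu(X^{e_i}_t)-\mu(X^0_t) = A(X^{e_i}_t - X^0_t)$ one computes $\sum_i \big(\mu(X^{e_i}_t)-\mu(X^0_t)\big)x_i + \mu(X^0_t) = A\big(\sum_i (X^{e_i}_t - X^0_t)x_i + X^0_t\big) + a = \mu(Y^x_t)$, and likewise for the diffusion coefficient. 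Hence $Y^x$ and $X^x$ solve the same SDE with the same initial datum, so by the uniqueness from (1) they are indistinguishable; fixing once and for all continuous versions of $X^0, X^{e_1},\dots,X^{e_d}$ makes the identity hold for all $t$ and, for each fixed $x$, $P$-a.s., which is the content of (4) (the displayed ``for all $\omega$'' should be read in this sense, or with respect to jointly continuous versions).

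I do not expect a genuine obstacle here, as all four statements are classical; the only point demanding care is the bookkeeping in (4) — choosing the versions so that the linear-combination identity is valid simultaneously for $x\in\{0,e_1,\dots,e_d\}$ and can subsequently be used (as in the proof of Theorem \ref{thm:approx-pinn}) to express $X^x_t$, $\mathcal{F}\varphi(X^\cdot_t)$ and their $\mathcal{L}^q$-norms as controlled functions of $x$.
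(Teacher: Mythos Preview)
Your proposal is correct; the arguments you sketch for (1)--(3) via Picard iteration, BDG and Gr\"onwall, and for (4) via verifying that the affine combination $Y^x$ solves the same SDE and invoking pathwise uniqueness, are precisely the standard ones. The paper itself does not carry out any of these steps but simply cites external references (Theorem~4.5.1 of \cite{nasode} for (1)--(3) and Lemma~2.20 of \cite{grohs2018proof} together with Lemma~3.3 of \cite{berner2020analysis} for (4)), whose content is exactly what you outline.
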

\begin{proof}
Properties (1)-(3) are proven in \cite[Theorem 4.5.1]{nasode}. Property (4) follows from Lemma 2.20 in \cite{grohs2018proof} and Lemma 3.3 in \cite{berner2020analysis}.
\end{proof}

\begin{lemma}\label{lem:h-hat}
Let $h: \mathbb{R}\to\mathbb{R} :x\mapsto \min\{1,\max\{0,x\}\}$. For every $N\geq 2$ and $\epsilon,\gamma>0$ there exists a tanh neural network $\hat{h}$ with two hidden layers, $\bigO\left(N^{\frac{1}{2(1-\gamma)}}\epsilon^{\frac{-3}{1-\gamma}}\right)$ neurons and weights growing as $\bigO\left(N^{\frac{1}{(1-\gamma)}}\epsilon^{\frac{-6}{1-\gamma}}\right)$ such that
\begin{equation}
    \norm{h-\hat{h}}_{L^{\infty}(\mathbb{R})} \leq \epsilon , \quad \norm{h'-\hat{h}'}_{L^{2}([-N,N])} \leq \epsilon \quad \text{ and }\quad \norm{\hat{h}'}_{L^{\infty}(\mathbb{R})}\leq 2. 
\end{equation}
\end{lemma}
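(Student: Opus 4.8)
The function $h$ is only Lipschitz and $h'=\chi_{[0,1]}$ is discontinuous, so one cannot approximate it directly in $W^{1,\infty}$. The plan is therefore: (i) replace $h$ by a smooth surrogate $\tilde h$ whose derivatives are quantitatively controlled in terms of a mollification width $\delta$; (ii) emulate $\tilde h$ by a two‑hidden‑layer $\tanh$ network using the Sobolev‑type approximation results for $\tanh$ networks (as in \cite{deryck2021approximation}); (iii) choose $\delta$ as a function of $\epsilon$ and collect the estimates. The loss $\tfrac{1}{1-\gamma}$ in the exponents will be exactly the price, inherent in those $\tanh$ estimates, of approximating the $\mathcal{O}(\delta)$‑wide steep transition of $\tilde h$.

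\textbf{Step 1: mollification.} Fix an even nonnegative $\rho\in C_c^\infty(\mathbb{R})$ with $\supp\rho\subseteq[-1,1]$ and $\int\rho=1$, set $\rho_\delta(\cdot)=\delta^{-1}\rho(\cdot/\delta)$ and $\tilde h=h*\rho_\delta$. Since $h$ is $1$‑Lipschitz, nondecreasing and affine outside $[0,1]$, the surrogate $\tilde h$ is smooth, $1$‑Lipschitz, satisfies $0\le\tilde h'\le 1$, coincides with $h$ on $\mathbb{R}\setminus([-\delta,\delta]\cup[1-\delta,1+\delta])$ (in particular $\tilde h\equiv 0$ on $(-\infty,-\delta]$ and $\tilde h\equiv 1$ on $[1+\delta,\infty)$), and obeys $\|\tilde h-h\|_{L^\infty(\mathbb{R})}\le\delta$, $\|\tilde h'-\chi_{[0,1]}\|_{L^2(\mathbb{R})}^2\le 4\delta$ (the integrand is bounded by $1$ on a set of measure $4\delta$), together with $\|\tilde h^{(k)}\|_{L^\infty(\mathbb{R})}\le C_k\,\delta^{-(k-1)}$ for $k\ge 1$.

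\textbf{Step 2: $\tanh$ emulation and the three bounds.} Apply the two‑hidden‑layer $\tanh$ approximation result of \cite{deryck2021approximation} to $\tilde h$ on $[-N,N]$: for any $\tau>0$ and $\gamma\in(0,1)$ there is a $\tanh$ network $\hat h$ with two hidden layers and $\|\hat h-\tilde h\|_{W^{1,\infty}([-N,N])}\le\tau$, whose neuron count and weight magnitudes are bounded by powers of $\tau^{-1}$, of $N$, of $\|\tilde h\|_{W^{s,\infty}}$, and of $(1-\gamma)^{-1}$; one arranges in addition that $\hat h$ has the correct constant limits ($0$ at $-\infty$, $1$ at $+\infty$), so that $\|\hat h-\tilde h\|_{L^\infty(\mathbb{R})}$ is controlled, and that $\hat h'$ (a combination of $\mathrm{sech}^2$ profiles centred inside $[-N,N]$ since $N\ge 2$) decays exponentially outside $[-N,N]$. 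Now choose $\delta=c\epsilon^2$ (so that $2\sqrt\delta\le\epsilon/2$) and $\tau$ a small multiple of $\epsilon$. Then the triangle inequality gives $\|\hat h-h\|_{L^\infty(\mathbb{R})}\le\|\hat h-\tilde h\|_{L^\infty(\mathbb{R})}+\|\tilde h-h\|_{L^\infty(\mathbb{R})}\le\epsilon$; on $[-N,N]$, $\|\hat h'-\chi_{[0,1]}\|_{L^2([-N,N])}\le\sqrt{2N}\,\tau+2\sqrt\delta\le\epsilon$ after adjusting the constant in $\tau$; and $\|\hat h'\|_{L^\infty([-N,N])}\le\|\tilde h'\|_\infty+\tau\le 1+\tau\le 2$, while off $[-N,N]$ the $\mathrm{sech}^2$ terms are already exponentially small, giving $\|\hat h'\|_{L^\infty(\mathbb{R})}\le 2$.

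\textbf{Step 3: size count, and the main obstacle.} It remains to substitute $\delta\sim\epsilon^2$ and $\tau\sim\epsilon$ into the neuron/weight bounds of the approximation theorem: the Sobolev norms entering them scale as powers of $\delta^{-1}\sim\epsilon^{-2}$ and the interval length enters as a power of $N$, and collecting exponents (absorbing the arbitrarily small slack into $\tfrac{1}{1-\gamma}$) is what produces $\bigO(N^{1/(2(1-\gamma))}\epsilon^{-3/(1-\gamma)})$ neurons and $\bigO(N^{1/(1-\gamma)}\epsilon^{-6/(1-\gamma)})$ weights. I expect the main obstacle to be precisely this interface: one must carefully track how $\|\tilde h\|_{W^{s,\infty}}$ depends on $\delta$ through the (somewhat intricate) constants in the $\tanh$ Sobolev‑approximation estimates of \cite{deryck2021approximation}, and check that the balance $\delta\sim\epsilon^2$, $\tau\sim\epsilon$ reproduces exactly the stated exponents. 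A secondary nuisance is upgrading the on‑$[-N,N]$ conclusions that the cited theorem supplies to the global statements claimed here — fixing the asymptotic constants of $\hat h$ so that $\|\hat h-h\|_{L^\infty(\mathbb{R})}\le\epsilon$ and verifying $\|\hat h'\|_{L^\infty(\mathbb{R})}\le 2$ also outside $[-N,N]$.
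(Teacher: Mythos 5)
Your overall plan coincides with the paper's: replace $h$ by a smooth surrogate with transition width $\delta\sim\epsilon^2$ (the paper uses an explicit cosine-patched function rather than a mollification, but the quantitative outcome is the same: $\|h-\tilde h\|_{L^\infty}=\mathcal O(\epsilon^2)$, $\|h'-\tilde h'\|_{L^2}=\mathcal O(\epsilon)$, $\|\tilde h''\|_{L^\infty}=\mathcal O(\epsilon^{-2})$), then emulate $\tilde h$ by a two-hidden-layer $\tanh$ network via the Sobolev approximation theorem of \cite{deryck2021approximation} and balance parameters. The choice $\delta\sim\epsilon^2$, $\tau\sim\epsilon$ is exactly the right balance, and your $L^2$ bound $\|\tilde h'-\chi_{[0,1]}\|_{L^2}\le 2\sqrt\delta$ is what the paper uses.

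The one place where your route diverges from the paper's, and where I think you would run into trouble, is in Step~2: you propose to apply the $\tanh$-approximation theorem on the \emph{whole} interval $[-N,N]$. The constants in such Sobolev-approximation results for $\tanh$ networks carry a dependence on the length of the approximation interval, so invoking the theorem on $[-N,N]$ injects an uncontrolled (and unnecessary) factor of a power of $N$ into the neuron count and weight bounds, which would spoil the stated exponent $N^{1/(2(1-\gamma))}$. The paper sidesteps this by exploiting that $\tilde h$ is \emph{constant} outside $[-1,2]$: it applies \cite[Theorem 5.1]{deryck2021approximation} only on the fixed interval $[-1,2]$, so all theorem constants are $\mathcal O(1)$ and independent of $N$, yielding $\|\tilde h-\hat h^{\mathcal N}\|_{W^{1,\infty}([-1,2])}=\mathcal O(\mathcal N^{-1+\gamma}\epsilon^{-2})$. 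It then upgrades this to a global $W^{1,\infty}(\mathbb R)$ bound using structural features of the construction (the $\tanh$ tails are monotone and match $\tilde h$'s constant limits), which also gives $\|(\hat h^{\mathcal N})'\|_{L^\infty(\mathbb R)}\le 2$ for free. With that global $L^\infty$ control, the \emph{only} $N$-dependence in the final bound is the Cauchy--Schwarz factor $\sqrt{2N}$ needed to pass from an $L^\infty$ to an $L^2([-N,N])$ bound on $h'-\hat h'$; balancing $\sqrt{N}\,\mathcal N^{-1+\gamma}\epsilon^{-2}\lesssim\epsilon$ then gives $\mathcal N\sim N^{1/(2(1-\gamma))}\epsilon^{-3/(1-\gamma)}$ and weights $\mathcal O(\mathcal N^2)$. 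You correctly flagged the extension from a bounded interval to $\mathbb R$ as a ``secondary nuisance,'' but it is actually the \emph{reason} to approximate on the small fixed interval in the first place, not a loose end to patch afterwards.
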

\begin{proof}
We first approximate $h$ with a function $\Tilde{h}$ that is twice continuously differentiable, 
\begin{equation}
    \Tilde{h}(x) = \begin{cases}
    0 &x\leq -\frac{\pi \epsilon^2}{2},\\
    \frac{1}{2}\left(\frac{\pi \epsilon^2}{2} + x - \epsilon^2 \cos(\frac{x}{\epsilon^2})\right) &-\frac{\pi \epsilon^2}{2} < x \leq \frac{\pi \epsilon^2}{2}, \\
    x & \frac{\pi \epsilon^2}{2} < x \leq 1 - \frac{\pi \epsilon^2}{2},\\
    \frac{1}{2}\left(1-\frac{\pi \epsilon^2}{2} + x + \epsilon^2 \cos(\frac{1-x}{\epsilon^2})\right) & 1 - \frac{\pi \epsilon^2}{2} < x \leq 1 + \frac{\pi \epsilon^2}{2}, \\
    1 & 1 + \frac{\pi \epsilon^2}{2} < x. 
    \end{cases}
\end{equation}
It is easy to prove that $\norm{h-\Tilde{h}}_{L^\infty(\mathbb{R})} = \bigO(\epsilon^2)$. Next, we calculate the derivative of $\Tilde{h}$,
\begin{equation}
    \Tilde{h}'(x) = \begin{cases}
    0 &x\leq -\frac{\pi \epsilon^2}{2},\\
    \frac{1}{2}\left(1+ \sin(\frac{x}{\epsilon^2})\right) &-\frac{\pi \epsilon^2}{2} < x \leq \frac{\pi \epsilon^2}{2}, \\
    1 & \frac{\pi \epsilon^2}{2} < x \leq 1 - \frac{\pi \epsilon^2}{2},\\
     \frac{1}{2}\left(1+ \sin(\frac{1-x}{\epsilon^2})\right) & 1 - \frac{\pi \epsilon^2}{2} < x \leq 1 + \frac{\pi \epsilon^2}{2}, \\
    0 & 1 + \frac{\pi \epsilon^2}{2} < x. 
    \end{cases}
\end{equation}
A straightforward calculation leads to the bound $\norm{h'-\Tilde{h}'}_{L^2(\mathbb{R})} = \bigO(\epsilon)$. Finally, one can easily check that $\Tilde{h}''$ is continuous and that $\norm{\Tilde{h}''}_{L^\infty(\mathbb{R})} = \bigO(\epsilon^{-2})$. An application of \cite[Theorem 5.1]{deryck2021approximation} on $\Tilde{h}$ gives us for every $\gamma>0$ and $N$ large enough the existence of a tanh neural network $\hat{h}^\mathcal{N}$ with two hidden layers and $\bigO(\mathcal{N})$ neurons for which it holds that $\norm{\Tilde{h}-\hat{h}^\mathcal{N}}_{W^{1,\infty}([-1,2])} = \bigO(N^{-1+\gamma}\epsilon^{-2})$. Because of the nature of the construction of $\hat{h}^\mathcal{N}$, the monotonous behaviour of the hyperbolic tangent towards infinity and the fact that $\Tilde{h}$ is constant outside $[-1,2]$, the stronger result that $\norm{\Tilde{h}-\hat{h}^\mathcal{N}}_{W^{1,\infty}(\mathbb{R})} = \bigO(\mathcal{N}^{-1+\gamma}\epsilon^{-2})$ holds automatically as well. As a result we find that $\norm{(\hat{h}^\mathcal{N}) '}_{L^{\infty}(\mathbb{R})}\leq 2$, $\norm{\Tilde{h}-\hat{h}^\mathcal{N}}_{L^{\infty}(\mathbb{R})} = \bigO(\mathcal{N}^{-1+\gamma}\epsilon^{-2})$ and $\norm{\Tilde{h}-\hat{h}^\mathcal{N}}_{L^{2}([-N,N])} = \bigO(\sqrt{N}\mathcal{N}^{-1+\gamma}\epsilon^{-2})$. If we choose $\mathcal{N}\sim N^{\frac{1}{2(1-\gamma)}}\epsilon^{\frac{-3}{1-\gamma}}$ then we find that
\begin{equation}
    \norm{\Tilde{h}-\hat{h}^\mathcal{N}}_{L^{\infty}(\mathbb{R})} \leq \epsilon \quad \text{ and }\quad \norm{\Tilde{h}'-(\hat{h}^\mathcal{N})'}_{L^{2}([-N,N])} \leq \epsilon. 
\end{equation}
Moreover, \cite[Theorem 5.1]{deryck2021approximation} tells us that the weights of $\hat{h}^\mathcal{N}$ grow as $\bigO(\mathcal{N}^2) = \bigO\left(N^{\frac{1}{(1-\gamma)}}\epsilon^{\frac{-6}{1-\gamma}}\right)$. The statement then follows from applying the triangle inequality. 
\end{proof}

\section{Lipschitz continuity in the parameter vector of a neural network and its derivatives}\label{app:lipschitz}

In this section we will prove that for any $x\in D$, a neural network and its corresponding Jacobian and Hessian matrix are Lipschitz continuous in the parameter vector. This property is of crucial importance to find bounds on the generalization error of physics informed neural networks, cf. Section \ref{sec:gen}. We first introduce some notation and then state or results. The main results of this section are Lemma \ref{lem:NN-lipschitz} and Lemma \ref{lem:total-lipschitz}. 

We denote by  $\sigma:\mathbb{R}\to\mathbb{R}$ be an (at least) twice continuously differentiable activation function, like tanh or sigmoid. For any $n\in\mathbb{N}$, we write for $x\in\mathbb{R}^n$ that $\sigma(x) := (\sigma(x_1), \ldots, \sigma(x_n))$. We use the definition of a neural network as in Definition \ref{def:nn-app}.

Recall that for a differentiable function $f:\mathbb{R}^n\to\mathbb{R}^m$ the Jacobian matrix $J[f]$ is defined by 
\begin{equation}
    J[f]_{ij} = \frac{\partial f_i}{\partial x_j} \in \mathbb{R}^{m\times n}. 
\end{equation}
For our purpose, we make the following the following convention. For any $1\leq k\leq L$, we define
\begin{equation}
    J_k^\theta(x) := J[f_k^\theta]\left((f_{k-1}^\theta\circ \cdots \circ f_1^\theta)(x)\right) \in \mathbb{R}^{l_k \times l_{k-1}}. 
\end{equation}
Similarly, for a twice differentiable function $g:\mathbb{R}^n\to\mathbb{R}$ the Hessian matrix is defined by
\begin{equation}
    H[g]_{ij} = \frac{\partial^2 g}{\partial x_i \partial x_j}. 
\end{equation}
Slightly abusing notation, we generalize this to vector-valued functions $g:\mathbb{R}^n\to\mathbb{R}^m$. We write
\begin{equation}
    H[g]_{kij} = \frac{\partial^2 g_k}{\partial x_i \partial x_j}, 
\end{equation}
where we identify $\mathbb{R}^{1\times n \times n}$ with $\mathbb{R}^{n \times n}$ to make the definitions consistent. Similarly, if $v\in\mathbb{R}^{1 \times m}$, then $v\cdot H[g]$ should be interpreted as
\begin{equation}
    v\cdot H[g](x) := \sum_{k=1}^m v_k H[g_k](x) \in \mathbb{R}^{n \times n}. 
\end{equation}
For any $1\leq k < L$, we write
\begin{equation}
    H^\theta_k(x) := H[f_{k}^\theta]\left((f_{k-1}^\theta\circ \cdots \circ f_1^\theta)(x)\right) \in \mathbb{R}^{l_k \times l_{k-1} \times l_{k-1}}. 
\end{equation}
Finally, we will use the notation $J^\theta := J[\Psi^\theta]$ and $H^\theta := H[\Psi^\theta]$. The following lemma presents a generalized version of the chain rule. 

\begin{lemma}\label{lem:chain-rule}
Let $f:\mathbb{R}^n\to\mathbb{R}^m$ and $g:\mathbb{R}^m\to\mathbb{R}$. Then it holds that
\begin{equation}
    H[g\circ f](x):= J[f](x)^T \cdot H[g](f(x)) \cdot J[f](x) + J[g](f(x)) \cdot H[f](x). 
\end{equation}
\end{lemma}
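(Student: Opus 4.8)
The statement is the second-order chain rule for the composition of a vector-valued map $f:\mathbb{R}^n\to\mathbb{R}^m$ with a scalar function $g:\mathbb{R}^m\to\mathbb{R}$. The plan is to compute the Hessian of $g\circ f$ entry by entry and recognise the resulting expression as the claimed sum of a quadratic-form term and a linear term. First I would fix indices $1\le i,j\le n$ and apply the ordinary (first-order) chain rule once to obtain
\begin{equation}
    \partial_i (g\circ f)(x) = \sum_{k=1}^m (\partial_k g)(f(x))\, \partial_i f_k(x).
\end{equation}
Then I would differentiate this identity with respect to $x_j$, using the product rule on each summand; the derivative of the first factor $(\partial_k g)(f(x))$ requires one more application of the chain rule, producing $\sum_{l=1}^m (\partial_l\partial_k g)(f(x))\,\partial_j f_l(x)$, while the derivative of the second factor $\partial_i f_k(x)$ is simply $\partial_j\partial_i f_k(x)$.

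Carrying this out gives
\begin{equation}
    \partial_j\partial_i (g\circ f)(x) = \sum_{k,l=1}^m (\partial_l\partial_k g)(f(x))\,\partial_i f_k(x)\,\partial_j f_l(x) + \sum_{k=1}^m (\partial_k g)(f(x))\,\partial_j\partial_i f_k(x).
\end{equation}
The next step is purely bookkeeping: I would match each piece with the matrix expressions in the statement. The double sum is exactly the $(i,j)$ entry of $J[f](x)^T\cdot H[g](f(x))\cdot J[f](x)$, since $(J[f])_{ki} = \partial_i f_k$ and $(H[g])_{kl} = \partial_k\partial_l g$; expanding the triple matrix product componentwise confirms this. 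The single sum is the $(i,j)$ entry of $J[g](f(x))\cdot H[f](x)$ under the excerpt's convention that for a row vector $v\in\mathbb{R}^{1\times m}$ one has $v\cdot H[f] = \sum_k v_k H[f_k]$, here with $v = J[g](f(x))$ so that $v_k = \partial_k g(f(x))$ and $(H[f_k])_{ij} = \partial_i\partial_j f_k$. Since $i,j$ were arbitrary, the matrix identity follows.

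There is essentially no obstacle here — the result is a standard computation and the only mild care needed is (i) justifying the interchange/existence of the second derivatives, which is immediate from the hypothesis that $f$ and $g$ are twice differentiable (smoothness of the activation and affine maps in the intended application), and (ii) being consistent with the index and tensor-contraction conventions that were set up just before the lemma, particularly the meaning of $v\cdot H[f]$ and the identification $\mathbb{R}^{1\times n\times n}\cong\mathbb{R}^{n\times n}$. I would simply remark that both terms on the right-hand side are visibly symmetric in $i\leftrightarrow j$ (the first as a congruence of the symmetric matrix $H[g]$, the second as a sum of Hessians), consistent with the left-hand side being a Hessian, and conclude.
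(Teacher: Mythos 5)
The paper states Lemma \ref{lem:chain-rule} without supplying a proof, treating the second-order chain rule as a known fact before applying it in Lemma \ref{lem:jac-hess-NN}. Your componentwise derivation is correct and is the standard argument one would give: it correctly applies the first-order chain rule twice, uses the product rule, and matches the resulting index expressions to the paper's matrix and tensor conventions (in particular the convention $v\cdot H[f]=\sum_k v_k H[f_k]$ and $(J[f])_{ki}=\partial_i f_k$), so it is a valid proof of the omitted result.
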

We now apply this formula to find an expression for $H^\theta$ in terms of $ J^\theta_k$ and $ H^\theta_k$. 
\begin{lemma}\label{lem:jac-hess-NN}
It holds that
\begin{equation}
    J[\Psi^\theta] = \prod_{k=0}^{L-1} J_{L-k}^\theta \quad \text{and} \quad  H[\Psi^\theta] = \sum_{k=1}^L (J^\theta_1)^T\cdots(J^\theta_{k-1})^T\cdot\left(J^\theta_{L}\cdots J^\theta_{k+1}\cdot H^\theta_{k}\right)\cdot J^\theta_{k-1} \cdots J^\theta_{1}. 
\end{equation}
\end{lemma}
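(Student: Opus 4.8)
The plan is to establish both identities by induction on the number of layers. Write $g_k := f_k^\theta \circ f_{k-1}^\theta \circ \cdots \circ f_1^\theta$ for $1 \le k \le L$, so that $g_k = f_k^\theta \circ g_{k-1}$ and $g_L = \Psi^\theta$, and set $P_0 := \mathrm{Id}_{l_0}$ and $P_k := J_k^\theta J_{k-1}^\theta \cdots J_1^\theta$ for $k \ge 1$. For the Jacobian, the ordinary chain rule gives $J[g_k] = J[f_k^\theta](g_{k-1}(\cdot)) \cdot J[g_{k-1}] = J_k^\theta \cdot J[g_{k-1}]$, and since $J[g_1] = J_1^\theta$ an immediate induction yields $J[g_k] = P_k$; taking $k=L$ gives $J[\Psi^\theta] = P_L = \prod_{k=0}^{L-1} J_{L-k}^\theta$.

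For the Hessian I would apply Lemma \ref{lem:chain-rule} componentwise. For each output coordinate $a$ one has $(g_k)_a = (f_k^\theta)_a \circ g_{k-1}$, hence
\[
H[(g_k)_a] = J[g_{k-1}]^T \cdot H[(f_k^\theta)_a](g_{k-1}) \cdot J[g_{k-1}] + J[(f_k^\theta)_a](g_{k-1}) \cdot H[g_{k-1}],
\]
where the last term uses the convention $v \cdot H[g] = \sum_c v_c H[g_c]$ with $v$ the $a$-th row of $J_k^\theta$. Stacking over $a$ and using $J[g_{k-1}] = P_{k-1}$ gives the recursion
\[
H[g_k] = P_{k-1}^T \cdot H_k^\theta \cdot P_{k-1} + J_k^\theta \cdot H[g_{k-1}],
\]
in which the sandwich $P_{k-1}^T \cdot (\,\cdot\,) \cdot P_{k-1}$ is taken slice-by-slice over the first (output) index of the tensor. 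I would then prove by induction on $m$ that
\[
H[g_m] = \sum_{k=1}^m P_{k-1}^T \cdot \bigl(J_m^\theta \cdots J_{k+1}^\theta \cdot H_k^\theta\bigr) \cdot P_{k-1}, \qquad 1 \le m \le L,
\]
with the convention that the product $J_m^\theta \cdots J_{k+1}^\theta$ is the identity when $k=m$. The base case $m=1$ reads $H[g_1] = H_1^\theta = P_0^T H_1^\theta P_0$. For the inductive step, insert the hypothesis for $H[g_{m-1}]$ into the recursion and use that left-multiplication by $J_m^\theta$ contracts only the output index and therefore commutes with the slice-wise sandwiching, i.e. $J_m^\theta \cdot (P_{k-1}^T \cdot S \cdot P_{k-1}) = P_{k-1}^T \cdot (J_m^\theta \cdot S) \cdot P_{k-1}$; this turns the $k$-th summand of $J_m^\theta \cdot H[g_{m-1}]$ into the $k$-th summand of the claimed formula for $H[g_m]$ ($1 \le k \le m-1$), while $P_{m-1}^T H_m^\theta P_{m-1}$ supplies the $k=m$ summand. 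Setting $m=L$ and writing $P_{k-1}^T = (J_1^\theta)^T \cdots (J_{k-1}^\theta)^T$ gives exactly the stated identity.

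The only real difficulty here is notational: one must fix a single convention for which tensor index each matrix multiplication contracts (output index first, then the two differentiation indices) and check that, under this convention, left-multiplication by $J_m^\theta$ and the slice-wise sandwiching by $P_{k-1}^T,P_{k-1}$ act on disjoint indices, so that the commutation used in the inductive step is just multilinearity. Everything else is bookkeeping. I would also note in passing that $H_L^\theta = 0$ since $f_L^\theta = \mathcal{A}_L^\theta$ is affine, so the $k=L$ term actually vanishes; this is not needed for the identity but is convenient for the subsequent Lipschitz estimates.
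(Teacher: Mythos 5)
Your proof is correct and follows essentially the same strategy as the paper's: induction on the number of layers together with the generalized chain rule of Lemma \ref{lem:chain-rule}. The only difference is the direction of the peeling — you build $g_m = f_m^\theta \circ g_{m-1}$ from the input end and so need the small extra observation that left-contraction by $J_m^\theta$ (acting on the output index) commutes with the slice-wise sandwich by $P_{k-1}^T,P_{k-1}$ (acting on the two derivative indices), whereas the paper peels off the first layer, $\Psi^\theta = \Phi^\theta \circ f_1^\theta$, which makes the new term simply wrap the inductive sum and avoids that commutation step.
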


\begin{proof}
The first statement is just the chain rule for calculating the derivative of a composite function. We prove the second statement using induction. For the base step, let $L=1$. Then $\Psi^\theta = f_L^\theta$ and we have $H[\Psi^\theta] = H^\theta_L$. For the induction step, take $K\in\mathbb{N}, K\geq 2$ and assume that the statement holds for $L=K-1$. Now let $\Phi^\theta = f_{K}^\theta\circ \cdots \circ f_2^\theta$ and $\Psi^\theta = \Phi^\theta \circ f_1^\theta$. Applying the generalized chain rule to calculate $H[\Phi^\theta \circ f_1^\theta]$ and using the induction hypothesis on $H[\Phi^\theta]$ gives the wanted result. 
\end{proof}

Next, we formally introduce the element-wise supremum norm $\infn{\cdot}$. Let $N\in\mathbb{N}$, $n_0, \ldots n_N\in\mathbb{N}$ and $A\in\mathbb{R}^{n_{1}\times \cdots \times n_N}$. Then we define
\begin{equation}\label{eq:supremum-norm}
    \infn{A} := \max_{1\leq i_1 \leq n_1} \cdots \max_{1\leq i_N \leq n_N} \abs{A_{i_1 \cdots i_N}}. 
\end{equation}
Let $R>0$ and suppose that $A_i\in \mathbb{R}^{n_{i-1} \times n_i}$. Then it holds that 
\begin{equation}
    \infn{\prod_{i=1}^N A_i} \leq \infn{A_N}\prod_{i=1}^{N-1} n_i \infn{A_i}. 
\end{equation}
Moreover, for $v\in\mathbb{R}^{1 \times a}$ and $A\in\mathbb{R}^{a\times b \times c}$ it holds that $\infn{v\cdot A} \leq a \infn{v}\infn{A}$.

The following lemma states that the output of each layer of a neural network is Lipschitz continuous in the parameter vector for any input $x\in[a,b]^d$. The lemma is stated for neural networks with a differentiable activation function, but can be easily adapted for e.g. ReLU neural networks. 

\begin{lemma}\label{lem:NN-lipschitz}
Let $d,L,W\in\mathbb{N}$ with $L,W\geq 2$, $a,b\in\mathbb{R}$ with $a<b$ and $R\geq 1$. Moreover, let $\theta, \vartheta \in \Theta_{L,W,R}$, $\alpha = \max\{1,\abs{a}, \abs{b}, \norm{\sigma}_\infty\}$ and $\beta= \max\{1, \norm{\sigma'}_\infty\}$. Then it holds for $1\leq K\leq L$ that
\begin{equation}
    \norm{f_{K}^\theta \circ \cdots \circ f_1^\theta-f_{K}^\vartheta \circ \cdots \circ f_1^\vartheta}_{L^\infty([a,b]^d)} \leq \alpha(d+4) W^{K-1} R^{K-1} \beta^K\infn{\theta-\vartheta}.
\end{equation}
\end{lemma}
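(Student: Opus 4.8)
The plan is a telescoping argument over the layers at which the two parameter vectors $\theta$ and $\vartheta$ are swapped, combined with crude Lipschitz estimates for a single layer map, both in its input and in its parameters. Throughout I work with the element-wise supremum norm $\infn{\cdot}$ of \eqref{eq:supremum-norm}; since all the bounds below hold pointwise in $x\in[a,b]^d$, taking the supremum over $x$ at the end yields the $L^\infty$-statement.

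First I would record two per-layer estimates. For any $\theta\in\Theta_{L,W,R}$ and $1\le l\le L$, writing $f_l^\theta(z)=\sigma(W_lz+b_l)$ (resp. $W_lz+b_l$ if $l=L$), the mean value theorem together with $\norm{\sigma'}_\infty\le\beta$ and $\abs{(W_l)_{ik}}\le R$ gives the input-Lipschitz bound $\infn{f_l^\theta(z)-f_l^\theta(z')}\le\beta\, l_{l-1}R\,\infn{z-z'}\le\beta WR\,\infn{z-z'}$ whenever $l\ge 2$ (so that $l_{l-1}\le W$), and the parameter-Lipschitz bound $\infn{f_l^\theta(z)-f_l^\vartheta(z)}\le\beta\,(l_{l-1}\infn{z}+1)\,\infn{\theta-\vartheta}$. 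I would also note that if $z$ is the output of the first $l-1\ge 1$ layers of such a network evaluated on $[a,b]^d$, then layer $l-1$ has index $<L$, hence is a $\sigma$-layer, so $\infn{z}\le\norm{\sigma}_\infty\le\alpha$; and for $l=1$ the input itself satisfies $\infn{x}\le\max\{\abs a,\abs b\}\le\alpha$.

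Next, writing $g_K^\theta:=f_K^\theta\circ\cdots\circ f_1^\theta$, I would telescope over the swap point:
\[
g_K^\theta-g_K^\vartheta=\sum_{j=1}^{K}\Big[\big(f_K^\theta\circ\cdots\circ f_{j+1}^\theta\big)\circ f_j^\theta\circ\big(f_{j-1}^\vartheta\circ\cdots\circ f_1^\vartheta\big)-\big(f_K^\theta\circ\cdots\circ f_{j+1}^\theta\big)\circ f_j^\vartheta\circ\big(f_{j-1}^\vartheta\circ\cdots\circ f_1^\vartheta\big)\Big].
\]
For the $j$-th summand, the outer block $f_K^\theta\circ\cdots\circ f_{j+1}^\theta$ is a composition of $K-j$ layer maps of indices $\ge 2$, hence Lipschitz in its input with constant $\le(\beta WR)^{K-j}$; the inner block $f_{j-1}^\vartheta\circ\cdots\circ f_1^\vartheta$ sends $x$ to a point $y$ with $\infn{y}\le\alpha$ by the previous paragraph; and $\infn{f_j^\theta(y)-f_j^\vartheta(y)}$ is at most $\beta(d\alpha+1)\infn{\theta-\vartheta}$ when $j=1$ (where $l_0=d$) and at most $\beta(W\alpha+1)\infn{\theta-\vartheta}$ when $j\ge 2$. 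Summing,
\[
\infn{g_K^\theta(x)-g_K^\vartheta(x)}\le\beta\,\infn{\theta-\vartheta}\left[(\beta WR)^{K-1}(d\alpha+1)+(W\alpha+1)\sum_{m=0}^{K-2}(\beta WR)^m\right].
\]

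Finally I would close the estimate by arithmetic. Since $\beta WR\ge 2$ (using $W\ge 2$, $R\ge 1$, $\beta\ge 1$), the geometric sum is at most $2(\beta WR)^{K-2}$, so the bracket is bounded by $(\beta WR)^{K-2}\big[\beta WR\, d\alpha+\beta WR+2W\alpha+2\big]$, and it remains to check $\beta WR+2W\alpha+2\le 4\alpha\beta WR$, which holds because each of the three terms on the left is individually at most $\alpha\beta WR$ (using $\alpha\ge1$, $\beta R\ge1$, and $\alpha\beta WR\ge2$). Hence the bracket is at most $\alpha(d+4)(\beta WR)^{K-1}$, and multiplying by the leading $\beta$ gives exactly $\alpha(d+4)W^{K-1}R^{K-1}\beta^{K}\infn{\theta-\vartheta}$. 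The case $K=1$ is just the parameter-Lipschitz bound for $f_1$ together with $1\le 4\alpha$. The whole argument is essentially bookkeeping; the only points requiring any care are keeping the input width $l_0=d$ separate from the hidden widths ($\le W$) and carrying out this final absorption step, which is where the specific constant $(d+4)$ and the hypothesis $W\ge 2$ are used.
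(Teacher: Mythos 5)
Your argument is essentially the same as the paper's: telescoping over the layer at which $\theta$ is swapped for $\vartheta$, controlling each summand by an input-Lipschitz bound $(\beta WR)^{K-j}$ for the outer block and a parameter-Lipschitz bound $\beta(l_{j-1}\alpha+1)\infn{\theta-\vartheta}$ for the swapped layer, then summing the resulting geometric series. The paper phrases the recursion slightly differently (building up one layer at a time and keeping a running bound) and closes the geometric sum by estimating $\beta(W\alpha+1)/(WR\beta-1)\le 3\alpha$, whereas you use $\sum_{m=0}^{K-2}(\beta WR)^m\le 2(\beta WR)^{K-2}$; these are equivalent in spirit and both rely on $W\ge 2$.

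One small slip in your final absorption step: you justify $\beta WR+2W\alpha+2\le 4\alpha\beta WR$ by asserting that each of the three terms is individually at most $\alpha\beta WR$. That fails for the middle term, since $2W\alpha\le\alpha\beta WR$ would require $\beta R\ge 2$, while the hypotheses only give $\beta\ge 1$ and $R\ge 1$. The inequality is nonetheless true, because $2W\alpha\le 2\alpha\beta WR$ (using only $\beta R\ge1$), and then $\alpha\beta WR + 2\alpha\beta WR + \alpha\beta WR = 4\alpha\beta WR$. So the conclusion stands; only the stated justification for the middle term needs correcting.
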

\begin{proof}
Let $l_0, \ldots, l_L$ denote the widths of the neural network, where $l_0=d$. Let $x\in[a,b]^d$ be arbitrary. First of all, it holds that
\begin{align}
    \begin{split}
        \infn{f_1^\theta(x)-f_1^\vartheta(x)} &= \infn{\sigma(W^\theta_1x+b^\theta_1)-\sigma(W^\vartheta_1x+b^\vartheta_1)}\\
        &\leq \norm{\sigma'}_\infty \infn{(W^\theta_1-W^\vartheta_1)x+(b^\theta_1-b^\vartheta_1)}\\
        &\leq \beta (d\alpha+1)\infn{\theta-\vartheta}.
    \end{split}
\end{align}
Now let $2\leq k \leq L$ and define $y = (f_{k-1}^\theta\circ \cdots \circ f_1^\theta)(x)$ and $\Tilde{y} = (f_{k-1}^\vartheta\circ \cdots \circ f_1^\vartheta)(x)$. We find that
\begin{align}
    \begin{split}
         \infn{f_k^\theta(y)-f_k^\vartheta(\Tilde{y})} 
         & \leq \max\{1,\norm{\sigma'}_\infty\} \infn{(W^\theta_k-W^\vartheta_k)y+b^\theta_k-b^\vartheta_k+W^\vartheta_k(y-\Tilde{y})}\\
          & \leq \beta((l_{k-1}\alpha + 1)\infn{\theta-\vartheta} + l_{k-1}R\infn{y-\Tilde{y}}).
    \end{split}
\end{align}
A recursive application of this inequality then gives us for $1\leq K \leq L$ that
\begin{align}
    \begin{split}
         & \norm{f_{K}^\theta\circ f_{K-1}^\theta \circ \cdots \circ f_1^\theta-f_{K}^\vartheta\circ f_{K-1}^\vartheta \circ \cdots \circ f_1^\vartheta}_\infty \\
         & \leq \sum_{k=1}^K l_{K-1} \cdots l_k(l_{k-1}\alpha+1)R^{K-k}\beta^{K-k+1}\infn{\theta-\vartheta}\\
         & \leq  W^{K-1}(d\alpha+1) R^{K-1}\beta^K\infn{\theta-\vartheta} +\beta (W\alpha+1)\infn{\theta-\vartheta}\sum_{k=2}^K W^{K-k} R^{K-k}\beta^{K-k}\\
         & \leq   W^{K-1}(d\alpha+1) R^{K-1}\beta^K\infn{\theta-\vartheta} + \frac{\beta (W\alpha+1)W^{K-1} R^{K-1}\beta^{K-1}}{WR\beta-1}\infn{\theta-\vartheta}\\
         & \leq \alpha(d+4) W^{K-1} R^{K-1}\beta^{K}\infn{\theta-\vartheta},
    \end{split}
\end{align}
where we used that $\beta(W\alpha+1)/(WR\beta-1) \leq \beta(2\alpha+1)/(2R\beta-1) \leq 3\alpha$ when $W\geq 2, R\geq 1, \alpha\geq 1$. 

\end{proof}

\begin{lemma}\label{lem:jac-hess-lipschitz}
Let $d,L,W\in\mathbb{N}$ with $L,W\geq 2$, $a,b\in\mathbb{R}$ with $a<b$ and $R\geq 1$. Moreover, let $\theta, \vartheta \in \Theta_{L,W,R}$, $\alpha = \max\{1,\abs{a}, \abs{b}, \norm{\sigma}_\infty\}$ and $\beta= \max\{1, \norm{\sigma'}_\infty,\norm{\sigma''}_\infty,\norm{\sigma'''}_\infty\}$. Then it holds for all $1\leq k \leq L$ and $x\in[a,b]^d$ that
\begin{align}
     \infn{J^\theta_k(x)_i-J^\vartheta_k(x)_i} &\leq \beta(1 +\alpha(d+4) W^{k-1} R^{k}\beta^{k-1} + R(\alpha W+1)) \infn{\theta-\vartheta} \quad \text{and}\\
       \infn{H^\theta_k(x)_i-H^\vartheta_k(x)_i} &\leq 2\beta R(1+\alpha(d+4) W^{k-1} R^{k}\beta^{k-1} + R(\alpha W+1)) \infn{\theta-\vartheta}.
\end{align}
\end{lemma}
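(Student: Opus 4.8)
The plan is to exploit the fact that the per-layer Jacobians $J_k^\theta$ and Hessians $H_k^\theta$ have completely explicit closed forms, so that the chain-rule identities of Lemmas~\ref{lem:chain-rule}--\ref{lem:jac-hess-NN} are not needed and everything reduces to the parameter-Lipschitz estimate for the layer outputs already established in Lemma~\ref{lem:NN-lipschitz}. Write $y^\theta := (f_{k-1}^\theta\circ\cdots\circ f_1^\theta)(x)$ for the input seen by layer $k$ (with $y^\theta=x$ when $k=1$), let $(W_k^\theta,b_k^\theta)=\theta_k$, and set $s^\theta := W_k^\theta y^\theta + b_k^\theta$. For $k<L$ layer $k$ acts as $f_k^\theta(z)=\sigma(W_k^\theta z+b_k^\theta)$, so differentiating componentwise gives
\begin{equation}
J_k^\theta(x)_{ij}=\sigma'(s^\theta_i)(W_k^\theta)_{ij},\qquad H_k^\theta(x)_{ijl}=\sigma''(s^\theta_i)(W_k^\theta)_{ij}(W_k^\theta)_{il}.
\end{equation}
For $k=L$ the layer is affine, so $J_L^\theta(x)_{ij}=(W_L^\theta)_{ij}$ is independent of $x$ and $H_L^\theta\equiv 0$; hence the case $k=L$ is immediate from $\infn{W_L^\theta-W_L^\vartheta}\le\infn{\theta-\vartheta}$, and I would assume $k<L$ throughout the rest.

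First I would establish a Lipschitz bound for the pre-activations $s^\theta$. Since the outputs of layers $k\ge 2$ are bounded in modulus by $\norm{\sigma}_\infty$ and $x\in[a,b]^d$ is bounded by $\max\{\abs a,\abs b\}$, one has $\infn{y^\theta}\le\alpha$. Splitting
\begin{equation}
s^\theta_i-s^\vartheta_i = \big((W_k^\theta-W_k^\vartheta)y^\theta\big)_i + \big(W_k^\vartheta(y^\theta-y^\vartheta)\big)_i + (b_k^\theta-b_k^\vartheta)_i,
\end{equation}
I would bound the first term by $W\alpha\,\infn{\theta-\vartheta}$ (using $l_{k-1}\le W$; for $k=1$ one uses $l_0=d\le d+4$ instead), the last term by $\infn{\theta-\vartheta}$, and the middle term by $WR\,\infn{y^\theta-y^\vartheta}$, where Lemma~\ref{lem:NN-lipschitz} applied with $K\leftarrow k-1$ controls $\infn{y^\theta-y^\vartheta}$ (the estimate is vacuously $0$ when $k=1$). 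This gives
\begin{equation}\label{eq:pre-act-lip}
\infn{s^\theta-s^\vartheta}\le\Big((\alpha W+1)+\alpha(d+4)W^{k-1}R^{k-1}\beta^{k-1}\Big)\infn{\theta-\vartheta}.
\end{equation}

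Next I would derive the two estimates by elementary algebraic decompositions. For the Jacobian, write $J_k^\theta(x)_{ij}-J_k^\vartheta(x)_{ij} = \sigma'(s^\theta_i)\big((W_k^\theta)_{ij}-(W_k^\vartheta)_{ij}\big) + \big(\sigma'(s^\theta_i)-\sigma'(s^\vartheta_i)\big)(W_k^\vartheta)_{ij}$ and bound the summands by $\norm{\sigma'}_\infty\infn{\theta-\vartheta}\le\beta\infn{\theta-\vartheta}$ and by $\norm{\sigma''}_\infty R\,\infn{s^\theta-s^\vartheta}\le\beta R\,\infn{s^\theta-s^\vartheta}$; inserting \eqref{eq:pre-act-lip} reproduces the first claimed inequality. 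For the Hessian, telescope the triple product,
\begin{align}
H_k^\theta(x)_{ijl}-H_k^\vartheta(x)_{ijl} ={}& \big(\sigma''(s^\theta_i)-\sigma''(s^\vartheta_i)\big)(W_k^\theta)_{ij}(W_k^\theta)_{il}\\
&+ \sigma''(s^\vartheta_i)\big((W_k^\theta)_{ij}-(W_k^\vartheta)_{ij}\big)(W_k^\theta)_{il} + \sigma''(s^\vartheta_i)(W_k^\vartheta)_{ij}\big((W_k^\theta)_{il}-(W_k^\vartheta)_{il}\big),
\end{align}
and bound the three terms by $\norm{\sigma'''}_\infty R^2\,\infn{s^\theta-s^\vartheta}\le\beta R^2\,\infn{s^\theta-s^\vartheta}$, by $\beta R\,\infn{\theta-\vartheta}$, and by $\beta R\,\infn{\theta-\vartheta}$ respectively; substituting \eqref{eq:pre-act-lip} and collecting terms yields a bound dominated by the claimed one (the overall factor $2$ comfortably absorbs the slack).

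I do not expect a genuine obstacle here: the entire difficulty is bookkeeping of the width, weight-bound and activation-bound constants --- in particular invoking Lemma~\ref{lem:NN-lipschitz} with the correct index shift $k-1$, remembering that the composition over the first $k-1$ layers is empty when $k=1$ (so $y^\theta=y^\vartheta=x$), and treating the affine last layer $k=L$ separately, since the generic formulas for $J_k^\theta$ and $H_k^\theta$ fail there.
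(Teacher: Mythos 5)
Your proof is correct and follows essentially the same route as the paper: write the explicit per-layer formulas for $J_k^\theta$ and $H_k^\theta$ in terms of $\sigma'(s_i^\theta)$ and $\sigma''(s_i^\theta)$, decompose the differences by the triangle inequality and the Lipschitz continuity of $\sigma'$ and $\sigma''$, and invoke Lemma \ref{lem:NN-lipschitz} (applied with index $k-1$) to control the preactivation difference before collecting constants. Your explicit treatment of the edge cases $k=1$ (empty composition) and $k=L$ (affine output layer, vanishing Hessian) is slightly more careful than the paper's one-line remark, but the underlying argument is identical.
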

\begin{proof}
Let $w_i^T$ be the $i$-th row  of $W^{\theta, k}$, let $\Tilde{w}_i^T$ be the $i$-th row of $W^{\vartheta, k}$ and set $b:=b^{\theta, k}$ and $\Tilde{b}:=b^{\vartheta, k}$. Let $F = f_{k-1}^\theta\circ \cdots \circ f_1^\theta$ and $\Tilde{F} = f_{k-1}^\vartheta\circ \cdots \circ f_1^\vartheta$. For $1\leq i \leq l_k$, we have that
\begin{align}
    J^\theta_k(x)_i &= \sigma'(w_i^T\cdot F(x)+b_i) \cdot w_i^T \in \mathbb{R}^{1 \times l_{k-1}}\\
     H^\theta_k(x)_i &= \sigma''(w_i^T\cdot F(x)+b_i) \cdot w_i \cdot w_i^T \in \mathbb{R}^{l_{k-1} \times l_{k-1}}
\end{align}
and analogously for $J^\vartheta_k(x)_i$ and $H^\vartheta_k(x)_i$. The triangle inequality and the Lipschitz continuity of $\sigma'$ gives us that
\begin{align}
    \begin{split}
         \infn{J^\theta_k(x)_i-J^\vartheta_k(x)_i} \leq& \: \norm{\sigma'}_\infty \infn{w_i -\Tilde{w}_i } + \abs{\sigma'(w_i^T\cdot F(x)+b_i)-\sigma'(\Tilde{w}_i^T\cdot \Tilde{F}(x)+\Tilde{b}_i)} \infn{\Tilde{w}_i}\\
         \leq & \: \beta\infn{\theta-\vartheta} + \norm{\sigma''}_\infty R \abs{w_i^T\cdot (F(x)-\Tilde{F}(x)) +(w_i-\Tilde{w}_i)^T\cdot \Tilde{F}(x) +b_i-\Tilde{b}_i}\\
         \leq&\: \beta \infn{\theta-\vartheta} + \norm{\sigma''}_\infty R \left(l_{k-1}R\infn{F(x)-\Tilde{F}(x)}+(l_{k-1}\norm{\sigma}_\infty+1)\infn{\theta-\vartheta}\right).
    \end{split}
\end{align}
Using that $\infn{F(x)-\Tilde{F}(x)} \leq \alpha(d+4) W^{k-2} R^{k-2}\beta^{k-1}\infn{\theta-\vartheta}$ (Lemma \ref{lem:NN-lipschitz}) for $k\geq 2$ and $l_{k-1}\leq W$, we get
\begin{equation}
     \infn{J^\theta_k(x)_i-J^\vartheta_k(x)_i} \leq \beta(1 +\alpha(d+4) W^{k-1} R^{k}\beta^{k-1} + R(\alpha W+1)) \infn{\theta-\vartheta}
\end{equation}
for $k\geq 2$. One can check that the inequality also holds for $k=1$. 

For the Hessian matrix, the triangle inequality and the Lipschitz continuity of $\sigma''$ gives us that
\begin{align}
    \begin{split}
         \infn{H^\theta_k(x)_i-H^\vartheta_k(x)_i} \leq& \norm{\sigma''}_\infty \infn{w_i \cdot w_i^T-\Tilde{w}_i \cdot \Tilde{w}_i^T} + \abs{\sigma''(w_i^T\cdot F(x)+b_i)-\sigma''(\Tilde{w}_i^T\cdot \Tilde{F}(x)+\Tilde{b}_i)} \infn{\Tilde{w}_i \cdot \Tilde{w}_i^T}\\
         \leq & 2\beta R\infn{\theta-\vartheta} + \norm{\sigma'''}_\infty R^2(\alpha W+1) \infn{\theta-\vartheta} + \norm{\sigma'''}_\infty R^3 W \infn{F(x)-\Tilde{F}(x)}
    \end{split}
\end{align}
Using Lemma \ref{lem:NN-lipschitz} again, we get
\begin{equation}
     \infn{H^\theta_k(x)_i-H^\vartheta_k(x)_i} \leq 2\beta R(1+\alpha(d+4) W^{k-1} R^{k}\beta^{k-1} + R(\alpha W+1)) \infn{\theta-\vartheta}
\end{equation}
for $k\geq 2$. One can check that the inequality also holds for $k=1$. 
\end{proof}

The following lemma states that the Jacobian and Hessian matrix of a neural network are Lipschitz continuous in the parameter vector for any input $x\in[a,b]^d$. 

\begin{lemma}\label{lem:total-lipschitz}
Let $d,L,W\in\mathbb{N}$ with $L,W\geq 2$, $a,b\in\mathbb{R}$ with $a<b$ and $R\geq 1$. Moreover, let $\theta, \vartheta \in \Theta_{L,W,R}$, $\alpha = \max\{1,\abs{a}, \abs{b}, \norm{\sigma}_\infty\}$ and $\beta= \max\{1, \norm{\sigma'}_\infty,\norm{\sigma''}_\infty,\norm{\sigma'''}_\infty\}$. Then it holds that for all $x\in[a,b]^d$ that
\begin{align}
 \infn{J[\Psi^\theta](x)-J[\Psi^\vartheta](x)} &\leq 2\alpha(d+7)LR^{2L-1}W^{2L-2}\beta^{L-1}\infn{\theta-\vartheta},\\
    \infn{H[\Psi^\theta](x)-H[\Psi^\vartheta](x)} &\leq 4\alpha(d+7)L^2 R^{3L-1}W^{3L-3}\beta^L \infn{\theta-\vartheta}.
\end{align}
\end{lemma}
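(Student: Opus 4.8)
The plan is to reduce everything to the closed-form expressions for $J[\Psi^\theta]$ and $H[\Psi^\theta]$ from Lemma \ref{lem:jac-hess-NN} and combine them with the per-layer Lipschitz estimates of Lemma \ref{lem:jac-hess-lipschitz} via the standard telescoping identity for differences of products. First I would record uniform-in-$\theta$ bounds on the individual factors: since $J^\theta_k(x)_i=\sigma'(w_i^T F(x)+b_i)w_i^T$ for $1\le k<L$ while $J^\theta_L=W_L$, one has $\infn{J^\theta_k(x)}\le\beta R$ for $k<L$ and $\infn{J^\theta_L}\le R$, and similarly $\infn{H^\theta_k(x)}\le\beta R^2$. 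Feeding these into the $\infn{\cdot}$-submultiplicativity bound $\infn{\prod_i A_i}\le\infn{A_N}\prod_{i=1}^{N-1}n_i\infn{A_i}$ and the tensor-contraction bound $\infn{v\cdot A}\le a\infn{v}\infn{A}$ recorded before Lemma \ref{lem:NN-lipschitz}, together with $l_k\le W$, shows that any product of $m$ Jacobian factors (at most one of which is $J^\theta_L$) has $\infn{\cdot}$-norm bounded by a power of $W\beta R$; the precise exponent is routine bookkeeping.

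For the Jacobian I would apply, with $A_k=J^\theta_{L+1-k}$ and $B_k=J^\vartheta_{L+1-k}$, the telescoping identity
\[
\prod_{k=1}^{N}A_k-\prod_{k=1}^{N}B_k=\sum_{j=1}^{N}\Big(\prod_{k=1}^{j-1}B_k\Big)(A_j-B_j)\Big(\prod_{k=j+1}^{N}A_k\Big).
\]
Each of the $L$ summands is then bounded by (product of uniform factor bounds)$\times$(the per-factor difference bound of Lemma \ref{lem:jac-hess-lipschitz}); summing gives the estimate for $\infn{J[\Psi^\theta](x)-J[\Psi^\vartheta](x)}$. The dominant contribution comes from the term $\alpha(d+4)W^{k-1}R^{k}\beta^{k-1}$ inside the Lemma \ref{lem:jac-hess-lipschitz} bound, which after multiplication by the remaining $O(L)$ factors of size $\le W\beta R$ yields the $R^{2L-1}W^{2L-2}\beta^{L-1}$ scaling, while the sum of $L$ terms supplies the leading factor $L$, matching $2\alpha(d+7)L$.

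For the Hessian I would handle each of the $L$ summands of $H[\Psi^\theta]$ in Lemma \ref{lem:jac-hess-NN} separately. The $k$-th summand is a product of the transposed Jacobians $(J^\theta_1)^T\cdots(J^\theta_{k-1})^T$, the block $J^\theta_L\cdots J^\theta_{k+1}\cdot H^\theta_k$, and $J^\theta_{k-1}\cdots J^\theta_1$, i.e. roughly $2L$ Jacobian-type factors and one Hessian factor. I would again telescope the difference of its $\theta$- and $\vartheta$-versions over all $\sim 2L$ factors, using the uniform bounds for all but one factor and the appropriate difference bound of Lemma \ref{lem:jac-hess-lipschitz} (the Hessian inequality when the perturbed factor is $H_k$, the Jacobian inequality otherwise) for the remaining one; summing the $\sim 2L$ terms within a summand and then over the $L$ summands produces the overall factor $L^2$ and, by the same power count as before, the $R^{3L-1}W^{3L-3}\beta^L$ scaling, matching $4\alpha(d+7)L^2$.

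The main obstacle is purely the constant/exponent bookkeeping: one must set up each telescoping so that the already-replaced factors carry the $\vartheta$-index and the remaining ones the $\theta$-index (so the uniform bounds apply cleanly), keep track of every dimensional factor $l_k\le W$ picked up from $\infn{\cdot}$-submultiplicativity and from the contraction $\infn{v\cdot A}\le a\infn{v}\infn{A}$, and note that the geometric-type sums over $k$ — which arise because the Lemma \ref{lem:jac-hess-lipschitz} bound itself grows like $(W\beta R)^{k}$ — are dominated by their last term. None of these steps is conceptually difficult, but this is where essentially all of the work lies, and care is needed to land exactly on the stated constants.
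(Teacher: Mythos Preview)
Your proposal is correct and follows essentially the same route as the paper: the paper also uses the representations of Lemma \ref{lem:jac-hess-NN}, telescopes the difference of each product over its factors (encoded there via auxiliary objects $\phi^{l}$ and $A^{k,l}_j$ that play exactly the role of your ``already-replaced factors carry the $\vartheta$-index and the remaining ones the $\theta$-index''), and then bounds each summand by the uniform factor bounds times the per-factor Lipschitz estimate of Lemma \ref{lem:jac-hess-lipschitz}. The only difference is cosmetic notation; the structure, the lemmas invoked, and the exponent bookkeeping are the same.
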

\begin{proof}
We will prove the formulas by repeatedly using the triangle inequality and using the representations proven in Lemma \ref{lem:jac-hess-NN}. To do so, we need to introduce some notation. Define for $0\leq l\leq L+k-1$ the object $\phi^{l}\in\{\theta, \vartheta\}^{2L}$ such that 
\begin{equation}
    \phi^{l}_j = \begin{cases}\vartheta & j\leq l,\\ \theta & j > l. \end{cases} \qquad \text{and} \qquad A^{k,l}_j = \begin{cases}
    (J^{\phi^{l}_j}_j)^T & 1\leq j \leq k-1,\\ 
    J^{\phi^{l}_{k}}_{L+k-j} & k\leq j \leq L-1 \\
    H^{\phi^{l}_{L}}_{k} & j=L \\
    J^{\phi^{l}_{k}}_{L+k-j} & L+1 \leq j \leq L+k-1.
    \end{cases}
\end{equation}
In particular, $\phi^{k,0}_j = \theta$ and $\phi^{k,{L+k-1}}_j = \vartheta$ for all $j$. To simplify notation, we write
\begin{align}
    \begin{split}
         h_k^{l} &= (J^{\phi^{l}_1}_1)^T\cdots(J^{\phi^{l}_{k-1}}_{k-1})^T\cdot\left(J^{\phi^{l}_{k}}_{L}\cdots J^{\phi^{l}_{L-1}}_{k+1}\cdot H^{\phi^{l}_{L}}_{k}\right)\cdot J^{\phi^{l}_{L+1}}_{k-1} \cdots J^{\phi^{l}_{L+k-1}}_{1} = \prod_{j=1}^{L+k-1} A^{k,l}_j.
    \end{split}
\end{align}
The triangle inequality and Lemma \ref{lem:jac-hess-NN} then give that
\begin{equation}
    \infn{H^\theta-H^\vartheta} \leq \sum_{k=1}^L \sum_{l=1}^{L+k-1} \infn{ h_k^{l-1}- h_k^{l}}. 
\end{equation}
Observe that $A^{k,l-1}_j-A^{k,l}_j = 0$ for $j\neq l$. Therefore
\begin{align}
    \begin{split}
        \infn{h_k^{l-1}- h_k^{l}} & = \infn{A^{k,l}_1 \cdots A^{k,l}_{l-1}\cdot( A^{k,l-1}_{l}-A^{k,l}_{l}) \cdot A^{k,l}_{l+1} \cdots A^{k,l}_{L+k-1}} \\
        &\leq (l_1 \cdots l_{k-1})^2 \cdot l_k \cdots l_{L-1} \cdot R^{L+k-2} \infn{A^{k,l-1}_{l}-A^{k,l}_{l}}\\
        &\leq W^{L+k-2}R^{L+k-2} \infn{A^{k,l-1}_{l}-A^{k,l}_{l}}.
    \end{split}
\end{align}
From Lemma \ref{lem:jac-hess-lipschitz}, it follows that
\begin{equation}
    \infn{A^{k,l-1}_{l}-A^{k,l}_{l}} \leq 2\beta R(1+\alpha(d+4) W^{k-1} R^{k}\beta^{k-1} + R(\alpha W+1)) \infn{\theta-\vartheta}
\end{equation}
Writing $\gamma := 1+ R(\alpha W+1)$ we get
\begin{align}
    \begin{split}
         \infn{H^\theta-H^\vartheta} &\leq \sum_{k=1}^L (L+k-1) W^{L+k-2}R^{L+k-2}\cdot 2\beta R(1+\alpha(d+4) W^{k-1} R^{k}\beta^{k-1} + R(\alpha W+1)) \infn{\theta-\vartheta} \\
         &\leq \sum_{k=1}^L 2L W^{2L-2}R^{2L-2}\cdot 2\beta R\alpha(d+7) W^{L-1} R^{L}\beta^{L-1}\infn{\theta-\vartheta}\\
         &\leq 4\alpha(d+7)L^2 R^{3L-1}W^{3L-3}\beta^L \infn{\theta-\vartheta}.
    \end{split}
\end{align}
In an entirely similar fashion we obtain 
\begin{align}
    \infn{J^\theta-J^\vartheta} \leq \sum_{k=1}^L W^{L-1}R^{L-1}\infn{J^\theta_k-J^\vartheta_k} \leq 2\alpha(d+7)LR^{2L-1}W^{2L-2}\beta^{L-1}\infn{\theta-\vartheta}.
\end{align}
\end{proof}

\section{Additional material for Section \ref{sec:gen}
}\label{app:gen}

\begin{lemma}[Hoeffding's inequality]\label{lem:hoeffding}
Let $\epsilon, c>0$, $N\in\mathbb{N}$, let $(\Omega,\mathcal{A},\mathbb{P})$ be a probability space and let $X_n:\Omega\to [0,c]$ be independent random variables. Then it holds that
\begin{equation}
    \mathbb{P}\left(\frac{1}{N}\left(\sum_{i=1}^N(X_i-\mathbb{E}[X_i])\right) \geq \epsilon \right) \leq  \exp\left(\frac{-2\epsilon^2N}{c^2}\right). 
\end{equation}
\end{lemma}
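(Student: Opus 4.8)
The plan is to run the classical Chernoff bounding argument in combination with Hoeffding's lemma. First I would center the summands: set $Y_i = X_i - \E{X_i}$, so that $\E{Y_i}=0$ and each $Y_i$ takes values in an interval $[a_i,b_i]$ of length $b_i-a_i=c$. For any $s>0$, Markov's inequality applied to the nonnegative random variable $\exp\!\big(s\sum_{i=1}^N Y_i\big)$, together with independence, yields
\begin{equation}
\Prob{\sum_{i=1}^N Y_i \geq N\epsilon} \leq e^{-sN\epsilon}\,\E{\exp\!\Big(s\sum_{i=1}^N Y_i\Big)} = e^{-sN\epsilon}\prod_{i=1}^N \E{e^{sY_i}}.
\end{equation}

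The crucial ingredient is Hoeffding's lemma: if $Y$ is a mean-zero random variable supported in $[a,b]$, then $\E{e^{sY}} \leq \exp\!\big(s^2(b-a)^2/8\big)$ for every $s\in\mathbb{R}$. I would establish this by using the convexity of $y\mapsto e^{sy}$ to dominate it on $[a,b]$ by the secant line through the two endpoints, taking expectations, and using $\E{Y}=0$ to obtain $\E{e^{sY}} \leq e^{\varphi(s)}$ with $\varphi(s) = \ln\!\big(\tfrac{b}{b-a}e^{sa} - \tfrac{a}{b-a}e^{sb}\big)$. A second-order Taylor expansion with remainder then shows $\varphi(0)=\varphi'(0)=0$ and $\varphi''(s)\leq (b-a)^2/4$ uniformly in $s$, whence $\varphi(s)\leq s^2(b-a)^2/8$. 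Applying this with $b-a=c$ to each factor above gives
\begin{equation}
\Prob{\frac1N\sum_{i=1}^N Y_i \geq \epsilon} \leq \exp\!\Big(-sN\epsilon + \tfrac{Ns^2c^2}{8}\Big).
\end{equation}

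Finally I would optimize the exponent over $s>0$: the quadratic $s\mapsto -s\epsilon + s^2c^2/8$ is minimized at $s=4\epsilon/c^2$, giving exponent $-2N\epsilon^2/c^2$ and hence exactly the claimed bound. The only genuine obstacle is the proof of Hoeffding's lemma, specifically the estimate $\varphi''\leq (b-a)^2/4$, which is a short but slightly delicate calculus computation (it reduces to the bound $p(1-p)\leq 1/4$ for a suitable $p\in[0,1]$ depending on $s$); everything else is entirely routine. Since the statement is completely standard, one could alternatively just cite it from any standard reference on concentration inequalities, but I think the self-contained three-line argument above is worth including.
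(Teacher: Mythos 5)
Your proof is correct and is the standard Chernoff–Hoeffding argument; the paper itself states this lemma without proof, treating it as a known result, so there is no distinct ``paper's approach'' to compare against. Your computation checks out: centering gives summands of range $c$, Markov plus independence reduces to bounding $\E{e^{sY_i}}$, Hoeffding's lemma gives $e^{s^2c^2/8}$, and optimizing at $s=4\epsilon/c^2$ yields the exponent $-2N\epsilon^2/c^2$. Citing a standard reference, as you suggest at the end, would match what the paper effectively does.
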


\begin{lemma}\label{lem:bounds-tanh}
Let $x\in\mathbb{R}$ and $ \sigma(x) = \tanh{x} = \frac{e^{-x}-e^{x}}{e^{-x}+e^{x}}$. It holds that $\sigma'(x) = 1-(\sigma(x))^2$ and $\sigma''(x) = -2\sigma(x)/(1-(\sigma(x))^2)$. In addition, it holds that $\norm{\sigma'}_\infty = 1$ and $\norm{\sigma''}_\infty = 4/3\sqrt{3} \leq 1$ and $\norm{\sigma'''}_\infty = 2$. 
\end{lemma}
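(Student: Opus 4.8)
The plan is a short direct computation followed by three elementary one-variable optimisations. First I would differentiate $\sigma(x)=\tanh x$ straight from the definition: the quotient rule (or the identity $\cosh^2 x-\sinh^2 x=1$) gives $\sigma'(x)=1-\tanh^2 x=1-\sigma(x)^2$, which is the first claimed identity. Since this is now a \emph{polynomial} relation in $\sigma$, I would simply differentiate it again: $\sigma''=-2\sigma\sigma'=-2\sigma(1-\sigma^2)$, and once more $\sigma'''=(1-\sigma^2)(6\sigma^2-2)$. (The printed formula for $\sigma''$ contains a typographical slip — a division where a multiplication is meant — and it is the product form $-2\sigma(1-\sigma^2)$ that is bounded and consistent with the finite sup-norm asserted below.)

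For the norm bounds, the key observation is that $x\mapsto\tanh x$ is a bijection of $\mathbb{R}$ onto $(-1,1)$, so the substitution $s=\sigma(x)$ turns each supremum over $x\in\mathbb{R}$ into the maximum of a polynomial in $s$ over the open interval $(-1,1)$; the boundary values $s\to\pm1$ (i.e.\ $x\to\pm\infty$) give $0$ in every case and so play no role. Concretely: $\abs{\sigma'}=1-s^2\le1$ with equality at $s=0$, hence $\norm{\sigma'}_\infty=1$; $\abs{\sigma''}=2\abs{s(1-s^2)}$, and $s\mapsto s-s^3$ has interior critical points $s=\pm1/\sqrt3$ where it equals $\pm\tfrac{2}{3\sqrt3}$, so $\norm{\sigma''}_\infty=\tfrac{4}{3\sqrt3}$, which is $<1$ since $3\sqrt3>4$; and writing $t=s^2\in[0,1)$ one gets $\sigma'''=-6t^2+8t-2$, a downward parabola in $t$ attaining $2/3$ at its vertex $t=2/3$ and $-2$ at $t=0$, so its largest absolute value on $[0,1)$ is $2$ (realised at $x=0$), giving $\norm{\sigma'''}_\infty=2$.

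There is no genuine obstacle here: the whole argument is one or two lines of differentiation plus the calculus of a cubic and a quartic. The only thing worth a moment's care is checking the endpoint behaviour as $s\to\pm1$ so as to be sure the suprema are attained in the interior (or at $s=0$) rather than merely approached, and noting that each optimal value is in fact realised at a finite $x$, so that the $\esssup$/$\sup$ distinction is immaterial.
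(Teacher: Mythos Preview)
Your argument is correct and complete. The paper states this lemma without proof, treating it as an elementary fact, so there is nothing to compare against; your direct computation and three one-variable optimisations are exactly the kind of verification intended. You are also right that the printed formula $\sigma''(x)=-2\sigma(x)/(1-\sigma(x)^2)$ is a typo for $\sigma''(x)=-2\sigma(x)(1-\sigma(x)^2)$: the quotient version equals $-\sinh(2x)$, which is unbounded and contradicts the asserted bound $\norm{\sigma''}_\infty=4/(3\sqrt{3})$.
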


The following lemma provides estimate on the various PINN residuals. It is based on the fact that neural networks and their derivatives are Lipschitz continuous in the parameter vector, the proof of which can be found in Appendix \ref{app:lipschitz}. 

\begin{lemma}\label{lem:lqq-calculation}
Let $d, L, W\in\mathbb{N}$, $R\geq 1$, $a,b\in\mathbb{R}$ with $a<b$ and let $u_\theta:[a,b]^d\to \mathbb{R}$, $\theta\in \Theta,$ be tanh neural networks with smooth activation function $\sigma$, at most $L-1$ hidden layers, width at most $W$ and weights and biases bounded by $R$. Let the PINN generalization $\Eg^q$ and training $\Et^q$ errors be defined as in Section \ref{sec:PINNs} for linear Kolmogorov PDEs (cf. Section \ref{sec:Kolmogorov}). Let $\alpha = \max\{1,\abs{a}, \abs{b}, \norm{\sigma}_\infty\}$ and $\beta= \max\{1, \norm{\sigma'}_\infty,\norm{\sigma''}_\infty,\norm{\sigma'''}_\infty\}$ and assume that $\max\{\norm{\varphi}_\infty, \norm{\psi}_\infty\}\leq \max_{\theta\in\Theta}\norm{u_\theta}_\infty$. Let $\mathfrak{L}^q_Q$ denote the Lipschitz constant of $\mathcal{E}^q_Q$, for $q=i,t,s$ and $Q=G,T$. Then it holds that
\begin{equation}
    \mathfrak{L}^q_Q \leq 2^5 \max_{x\in D}\left(1+\sum_{i=1}^d\abs{\mu(x)_i}+\sum_{i,j=1}^d\abs{(\sigma(x)\sigma(x)^*)_{ij}}\right)^2(d+7)^2L^4R^{6L-1}W^{6L-6}\beta^{2L}.
\end{equation}
\end{lemma}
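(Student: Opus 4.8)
### Proof proposal for Lemma~\ref{lem:lqq-calculation}

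The plan is to bound the Lipschitz constant of each of the six maps $\theta\mapsto\mathcal{E}^q_Q(\theta)^2$ ($q=i,t,s$, $Q=G,T$) by the same expression, using the observation that the generalization and training errors differ only by replacing an integral against $\mu$ by an empirical average, so the Lipschitz bound for one transfers verbatim to the other. First I would record the elementary reduction, already used in the proof of Corollary~\ref{cor:gen-supervised}: for functions $g_\theta:D\to\mathbb{R}$ bounded by some $B>0$ uniformly in $\theta$, the map $\theta\mapsto\int_D|g_\theta(x)-g(x)|^2\,d\mu(x)$ (and likewise its empirical counterpart) has Lipschitz constant at most $4B\sup_{x\in D}\mathrm{Lip}_\theta(g_\theta(x))$, since $|a^2-b^2|=|a+b||a-b|$ and $||a|-|a'||\le|a-a'|$. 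Thus it suffices to (i) produce a uniform bound $B$ on each residual $\mathcal{R}_q[u_\theta]$ over $\theta\in\Theta$ and $x\in D$, and (ii) produce a pointwise-in-$x$ Lipschitz constant in $\theta$ for each residual.

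For step (ii), the residuals $\mathcal{R}_t[u_\theta](x)=u_\theta(0,x)-\varphi(x)$ and $\mathcal{R}_s[u_\theta](y,t)=u_\theta(y,t)-\psi(y,t)$ involve only the network value, so Lemma~\ref{lem:NN-lipschitz} (with $K=L$) gives a Lipschitz constant $\alpha(d+4)W^{L-1}R^{L-1}\beta^L$ in $\infn{\theta-\vartheta}$. The interior residual $\mathcal{R}_i[u_\theta](x,t)=\partial_t u_\theta - \mathcal{L}[u_\theta]$ is a linear combination, with coefficients $1$, $\mu_i(x)$ and $\tfrac12(\sigma\sigma^*)_{ij}(x)$, of entries of the Jacobian $J[\Psi^\theta]$ and Hessian $H[\Psi^\theta]$; hence by Lemma~\ref{lem:total-lipschitz} its Lipschitz constant in $\theta$ is bounded by
\begin{equation}
\max_{x\in D}\Bigl(1+\textstyle\sum_{i}|\mu(x)_i|+\sum_{i,j}|(\sigma\sigma^*)_{ij}(x)|\Bigr)\cdot 4\alpha(d+7)L^2R^{3L-1}W^{3L-3}\beta^L,
\end{equation}
the dominant of the Jacobian and Hessian contributions being the Hessian one. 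For step (i), the uniform bounds follow from $\norm{u_\theta}_\infty\le W R$ (a crude layerwise estimate using $|\tanh|\le 1$ except possibly the last affine layer — more carefully $\norm{\sigma}_\infty$-based bounds give a power of $WR$), the analogous bounds $\infn{J[\Psi^\theta]}, \infn{H[\Psi^\theta]}\lesssim \alpha\,(WR\beta)^{L}$ obtained by feeding the products in Lemma~\ref{lem:jac-hess-NN} through the $\infn{\cdot}$ submultiplicativity inequalities, the assumption $\max\{\norm{\varphi}_\infty,\norm{\psi}_\infty\}\le\max_\theta\norm{u_\theta}_\infty$, and absorbing the coefficients $\mu,\sigma\sigma^*$ into the factor $C=\max_{x\in D}(1+\sum_i|\mu(x)_i|+\sum_{i,j}|(\sigma\sigma^*)_{ij}(x)|)$. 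This yields $B\le 2\alpha C (WR\beta)^{L}$ or so for the interior residual and $B\le 2WR$ for the data residuals.

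Combining (i) and (ii) via $\mathfrak L^q_Q\le 4B\,\mathrm{Lip}_\theta$, every one of the six constants is bounded by a product of the form (constant)$\cdot C^2(d+7)^2 L^4 R^{6L-1}W^{6L-6}\beta^{2L}$; keeping careful track of the numerical constant one checks it is at most $2^5$. The interior error is the worst case and dictates the final bound; the boundary and initial errors are smaller and are majorized by the same expression. The main obstacle — really just bookkeeping — is matching the exact exponents $R^{6L-1}$, $W^{6L-6}$, $\beta^{2L}$ in the claimed bound: this requires being slightly careful that the residual bound $B$ and the Lipschitz constant each carry roughly $(WR\beta)^{3L}$ so that their product gives the sixth powers, and that the factors of $\alpha$ get absorbed into $C$ and the numerical constant rather than appearing separately. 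I would carry out this exponent accounting last, after the structural steps are in place.
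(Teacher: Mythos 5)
Your approach is structurally identical to the paper's proof: the same $|a^2-b^2|=(a+b)(a-b)$ factorization reduces the Lipschitz constant of $\theta\mapsto\mathcal{E}^q_Q(\theta)^2$ to (uniform bound on the residual) $\times$ (pointwise Lipschitz constant of the residual in $\theta$); you then invoke Lemma~\ref{lem:NN-lipschitz} for $\mathcal{R}_t$ and $\mathcal{R}_s$ and Lemma~\ref{lem:total-lipschitz} for $\mathcal{R}_i$, obtain the uniform bounds by the same devices (the $\vartheta=0$ trick and the hypothesis $\max\{\|\varphi\|_\infty,\|\psi\|_\infty\}\le\max_\theta\|u_\theta\|_\infty$), and correctly identify the interior residual, dominated by the Hessian contribution, as the worst case. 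This is exactly how the paper proceeds.

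One caution on the accounting you defer. Lemma~\ref{lem:total-lipschitz} supplies both the Lipschitz constant $4\alpha C(d+7)L^2R^{3L-1}W^{3L-3}\beta^L$ for $\mathcal{R}_i$ and (via $\vartheta=0$) the uniform bound $\max_\theta\|\mathcal{R}_i[u_\theta]\|_\infty\le 4\alpha C(d+7)L^2R^{3L}W^{3L-3}\beta^L$, so the product carries $\alpha^2$. Your plan to ``absorb the factors of $\alpha$ into $C$ and the numerical constant'' cannot work as stated: $\alpha=\max\{1,|a|,|b|,\|\sigma\|_\infty\}$ depends on the domain $[a,b]$ (in particular on $T$), not on $\mu,\sigma$, so it is not dominated by $C$, nor by a fixed numerical constant like $2^5$. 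You will either need an explicit hypothesis forcing $\alpha\le 1$ (tanh gives $\|\sigma\|_\infty=1$, but one still needs $[a,b]\subseteq[-1,1]$), or the bound you prove should carry an $\alpha^2$ factor. This is the one place where ``just bookkeeping'' conceals a real choice to be made; the rest of your outline is sound and matches the paper.
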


\begin{proof}
Without loss of generality, we only focus on $\Eg^q$, for $q=i,s,t$. 
We see for $q=i,t,s$
\begin{equation}
    \abs{\Eg^q(\theta)-\Et^q(\vartheta)}_\infty \leq 2 \max_\theta \norm{\mathcal{R}_q[u_\theta]}_\infty \norm{\mathcal{R}_q[u_\theta]-\mathcal{R}_q[\Phi^\vartheta]}_\infty
\end{equation}
For $q=t,s$ and $(x,t)\in D\times [0,T]$, it follows from Lemma \ref{lem:NN-lipschitz} that
\begin{equation}
    \abs{\mathcal{R}_q[u_\theta](x,t)-\mathcal{R}_q[\Phi^\vartheta](t,x)} \leq (d+4) W^{L-1} R^{L-1} \infn{\theta-\vartheta},
\end{equation}
and similarly using Lemma \ref{lem:total-lipschitz} that
\begin{align}
    \begin{split}
\abs{\mathcal{R}_i[u_\theta](t,x)-\mathcal{R}_i[\Phi^\vartheta](t,x)} &\leq (1+\abs{\mu(x)}_1)\infn{J^\theta-J^\vartheta} + \abs{\sigma(x)\sigma(x)^*}_1 \infn{H_x^\theta-H_x^\vartheta}\\
&\leq 4 \alpha (1+\abs{\mu(x)}_1+\abs{\sigma(x)\sigma(x)^*}_1)(d+7)L^2R^{3L-1}W^{3L-3}\beta^L\infn{\theta-\vartheta}, 
    \end{split}
\end{align}
where we let $\abs{\cdot}_p$ denote the vector $p$-norm of the vectorized version of a general tensor (cf. \eqref{eq:supremum-norm}).
Next, we calculate using again Lemma \ref{lem:total-lipschitz} (by setting $\vartheta=0$) and $\max\{\norm{\varphi}_\infty, \norm{\psi}_\infty\}\leq \max_{\theta\in\Theta}\norm{u_\theta}_\infty$ for $q=t,s$ that
\begin{align}
    \begin{split}
\max_\theta \norm{\mathcal{R}_i[u_\theta]}_\infty &\leq4 \alpha C(d+7)L^2R^{3L}W^{3L-3}\beta^L, \quad  \max_\theta \norm{\mathcal{R}_q[u_\theta]}_\infty \leq 2 WR, 
    \end{split}
\end{align}
where $C=\max_{x\in D}(1+\abs{\mu(x)}_1+\abs{\sigma(x)\sigma(x)^*}_1)$. Combining all the previous results prove the stated bound.
\end{proof}

\end{document}